\newtheorem{theorem}{Theorem}
\newtheorem{lemma}[theorem]{Lemma}
\newtheorem{proposition}[theorem]{Proposition}
\newtheorem{definition}[theorem]{Definition}
\newtheorem{remark}[theorem]{Remark}
\newtheorem{example}[theorem]{Example}
\def\dst{\displaystyle}
\def\eps{\varepsilon}
\def\supp{\text{supp}}
\def\p{\partial}
\def\a{\alpha}
\def\R{\mathbb{R}}
\def\C{\mathcal{C}}
\def\B{\mathcal{B}}
\def\D{\mathcal{D}}
\def\M{\mathcal{M}}
\def\W{\mathbb{W}}
\def\N{\mathbb{N}}
\def\div{{\mbox{\small\rm  div}}\,}
\date{}
\newcommand{\comMagali}[1]{\textcolor{black}{#1}}
\newcommand{\MT}[1]{\textcolor{black}{#1}}
\newcommand{\rev}[1]{\textcolor{black}{#1}}
\title{A Wasserstein norm for signed measures, with application to non local transport equation with source term}
\author{
Benedetto Piccoli
\thanks{Department of Mathematical Sciences, Rutgers University Camden, Camden, NJ, USA. Email: piccoli@camden.rutgers.edu.
}
\and Francesco Rossi 
\thanks{
Dipartimento di Matematica ``Tullio Levi-Civita''
Universit\`a  degli Studi di Padova, Via Trieste 63, 35121 Padova, Italy. Email: francesco.rossi@math.unipd.it}
\and Magali Tournus 
\thanks{Aix Marseille Univ, CNRS, Centrale Marseille, I2M, Marseille, France. Email: mtournus@math.cnrs.fr.}
}
\begin{document}
\maketitle
\begin{abstract}
 We introduce the optimal transportation interpretation of the Kantorovich norm on the space of signed Radon measures with finite mass,
based on a generalized Wasserstein distance 
 for measures with different masses.

With the formulation and the new topological properties we obtain for this norm, we prove existence and uniqueness for solutions to non-local transport equations with source terms, when the initial condition is a signed measure.
\end{abstract}

 {\bf Keywords.} Wasserstein distance, Transport equation, Signed measures, Kantorovich duality.
 
 \medskip 
 
 {\bf AMS subject classifications.} 28A33, 35A01.

\section{Introduction}

The problem of optimal transportation, also called  Monge-Kantorovich problem, has been intensively studied in the mathematical community. Related to this problem, Wasserstein distances in the space of probability measures have revealed to be powerful tools, in particular for dealing with dynamics of measures like the transport Partial Differential Equation (PDE in the following), see e.g. \cite{AG08,AGS08}. For a complete introduction to Wasserstein distances, see \cite{Villani_topics,Villani_old_new}.

The main limit of this approach, at least for its application to dynamics of measures, is that the Wasserstein distances $W_p(\mu,\nu)$ ($p\geq 1$) are defined only if the two positive measures $\mu,\nu$ have the same mass. For this reason, \rev{the generalized Wasserstein distances $W_p^{a,b}(\mu,\nu)$ are introduced in \cite{PR14,PR16}: they combine the standard Wasserstein and total variation distances.} In rough words, for $W_p^{a,b}(\mu,\nu)$ an infinitesimal mass $\delta\mu$ of $\mu$ can either be removed at cost $a |\delta\mu|$, or moved from $\mu$ to $\nu$ at cost $bW_p(\delta\mu,\delta\nu)$. 
\rev{An optimal transportation problem between densities with different masses 
has been studied in \cite{CMC10,F10} where only a given fraction $m$ 
of each density is transported.
These works were motivated by a modeling issue:  
using the example of a resource that is extracted and that we want to distribute in factories, 
 one aims to use only a
certain given fraction of production and consumption capacity.
In this approach and contrarily to the generalized Wasserstein distance \cite{PR12}, the mass that is leftover has no impact on the distance between the measures $\mu$ and $\nu$.
In another context, for the purpose 
to interpret some reaction-diffusion equations not preserving masses as gradient flows, the authors of \cite{FG10} define the
distance $Wb_2$ between measures with different masses on a bounded domain.}
Further generalizations for positive measures with different masses, based on the Wasserstein distance, are introduced in \cite{Chizat2016,KMV16,LMS16}.

Such generalizations still have a drawback: both measures need to be positive. The first contribution of this paper is then the definition of a norm 
on the space of signed Radon measures with finite mass on $\R^d$. Such norm, based on an optimal transport approach, induces a distance generalizing the Wasserstein distance to signed measures.
We then prove that this norm corresponds to the extension of the so-called Kantorovich distance for finite signed Radon measures introduced in \cite{Hanin99} in 
the dual form \begin{equation}\label{Hanin}
\|\mu\|= \sup_{\|f\|_{\infty} \leq 1, \; \|f\|_{Lip}\leq 1}\dst\int_{\R^d}fd\mu.               
              \end{equation}
The novelty then  lies in the dual interpretation of this norm in the framework of optimal transportation.
We also prove new topological properties and characterizations of this norm.

The second main contribution of the paper is to use  this norm to guarantee well-posedness of the following non local transport equation 
with a source term being a signed measure.
We study the following PDE
 \begin{equation}
 \label{transport_equation}
  \p_t \mu_t(x) + \div(v[\mu_t](x) \mu_t(x)) = h[\mu_t](x), \qquad   \mu_{|t=0}(x)=\mu_0(x),
 \end{equation}
 for $x \in \R^d$ and $\mu_0 \in \M^s(\R^d)$,
 where $\M^s(\R^d)$ is the space of signed Radon measures with finite mass on $\R^d$.
Equation \eqref{transport_equation} has already been studied in the framework of positive measures, where 
it has been used for modeling several different phenomena such as crowd motion and  development in biology, see a review in \cite{PR20}.
 \rev{From the modeling point of view,}
one of the interests of signed measures is that they can be used 
to model phenomena \rev{for which the measures under study are intrinsically signed}.
\rev{For instance, in a model coming from the hydrodynamic equations of Ginzburg-Landau vortices, 
the vortex density  $\mu_t$  (which can be positive or negative depending the local topological degree) in domain occupied by a superconducting sample satisfies \eqref{transport_equation} with 
$h[\mu_t]=0$ and where
$v[\mu_t]$ is the magnetic field induced in the sample (see \cite{AMS11} and \cite{Mainini11}).}

\rev{Another} motivation to study equation \eqref{transport_equation} in the framework of signed measure 
 is the interpretation of $\mu_t$ as the spatial derivative of the entropy solution $\rho(x,t)$ to a scalar conservation law. 
 A link between scalar conservation laws and non local transport equation has been initiated in \cite{BCdFP15,JV16}, 
 but until now, studies are restricted to convex fluxes and 
 monotonous initial conditions, so that the spatial derivative $\mu_t$ is a positive measure for all $t>0$.
 \rev{To deal with generic scalar conservation laws, one need a space of signed measures equipped
 with a metric of Wasserstein type, see e.g. \cite{BBL05}.}
\\

The authors of \cite{AMS11} suggested to extend the usual Wasserstein distance $W_1$ to the couples of signed measures $\mu= \mu^+ - \mu^-$ and $\nu= \nu^+ - \nu^-$ 
such that $|\mu^+|+ |\nu^-|= |\mu^-|+ |\nu^+|$ 
by the formula $\W_1(\mu,\nu)= \W_1(\mu^+ + \nu^-, \mu^- + \nu^+)$.
This procedure fails for $p\neq1$, since triangular inequality is lost.
\rev{A counter-example to the triangular inequality is provided in  \cite{AMS11} for $d=1$ and  $p=2$: taking $\mu=\delta_0$, 
$\nu=\delta_4$, $\eta=\delta_1-\delta_2+\delta_3$, 
we obtain $\W_2(\mu,\nu)= 4$ whereas $\W_2(\mu,\eta)+\W_2(\eta,\nu) =\sqrt{2}+\sqrt{2}$}.

We use the same trick from \cite{AMS11} to turn the generalized Wasserstein distance $W_1^{a,b}$ 
into a distance for signed measures,, by setting
$\W_1^{a,b}(\mu,\nu)$ as $W_1^{a,b}(\mu^+ + \nu^-, \mu^- + \nu^+)$. 
\rev{For the same reason as mentioned above, 
this construction cannot be done for $p\neq 1$, in particular, no quadratic distance can 
be obtained with this construction.}
The space of signed measures being a vector space, we also define a norm 
$\|\mu\|^{a,b}= \W_1^{a,b}(\mu,0)$.
Notice that to  define the norm $\|.\|^{a,b}$, we need to restrict ourselves to Radon measures with finite mass, since the generalized Wasserstein distance \cite{PR14}
may not be defined for Radon measures with infinite mass.
\rev{Since the terms ``Wasserstein'' and ``norm'' are usually not used together, 
we emphasize that $\|.\|^{a,b}$ is a norm in the sense of linear vector spaces.}\\
We then use the norm $\|\cdot\|^{a,b}$ to study existence and uniqueness of the solution to the equation \eqref{transport_equation}. The regularity assumptions made in this paper on the vector field and on the source term are the 
 following: \\ 

\begin{enumerate}[label=(H-\arabic*)]
\item \label{H1} 
There exists $K$ such that for all $\mu , \nu \in \mathcal{M}^s(\R^d)$ it holds
\begin{equation}\label{Hyp_v:1}
 \|v[\mu]- v[\nu]\|_{\C^0(\R^d)} \leq K \|\mu- \nu\|^{a,b}.
\end{equation}

  \item \label{H2} 
 There exist $L,M$ such that for all $x,y \in \R^d$, for all $\mu \in \mathcal{M}^s(\R^d)$ it holds
\begin{equation}\label{Hyp_v:2}
 |v[\mu](x)- v[\mu](y)|  \leq L|x-y|, \qquad |v[\mu](x)| \leq M.
\end{equation}
 \item \label{H3} 
 There exist $Q,P,R$ such that for all $\mu, \nu \in \mathcal{M}^s(\R^d)$ it holds
 
\begin{equation}\label{Hyp_h:1}
\|h[\mu] -  h[\nu]\|^{a,b} \leq Q \|\mu- \nu\|^{a,b}, \qquad \left|h[\mu]\right| \leq P, \qquad \supp(h[\mu]) \subset B_0(R).
\end{equation}
\end{enumerate}
\rev{Hypothesis \ref{H1} guarantees that 
$v[\mu_t]$ is continuous in space, and then
the product $v[\mu_t]. \mu_t$ is well-defined.}
The main result about equation \eqref{transport_equation} is the following:
\begin{theorem}[Existence and uniqueness]\label{t-exun}
 Let $v$ and $h$ satisfy \ref{H1}-\ref{H2}-\ref{H3} and $\mu_0 \in \M^s(\R^d)$ compactly supported be given. 
 Then, there exists a unique distributional solution to \eqref{transport_equation} in the space $\C^0\left([0,1], \M^s(\R^d)\right)$
\rev{equipped with $\|\mu_t\|=\sup_{t\in[0,1]}\|\mu_t\|^{a,b}$ }. In addition, for $\mu_0$ and $\nu_0$ in $\M^s(\R^d)$, denoting by $\mu_t$ and $\nu_t$ the corresponding solutions, 
we have the following property for $t\in [0,1)$ of continuous dependence with respect to initial data:
\begin{equation*}
 \|\mu_t-\nu_t\|^{a,b} \leq  \|\mu_0-\nu_0\|^{a,b}  \exp(C_1t), \qquad C_1= 2L + 2K(P +\min\{|\mu_0|,|\nu_0|\})+Q, 
\end{equation*}
\rev{the following estimates on the mass and support:
\begin{equation*}
 |\mu_t|\leq |\mu_0|+P t, \qquad \supp\{\mu_t\} \subset B(0,R'+tM)\; \text{ for } R' \text{ such that }
\left(\supp\{\mu_0\}\cup B_0(R)\right)\subset B_0(R'),
\end{equation*}
the solution is Lipschitz in time:
\begin{equation*}
 \|\mu_{t+\tau}-\mu_t\|^{a,b} \leq C_2 \tau, \quad C_2= P+bM(P(t+\tau)+|\mu_0|),
 \quad  \tau \geq 0.
\end{equation*}
}
 \end{theorem}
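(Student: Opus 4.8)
The plan is to set up a Banach fixed-point argument on the space $\C^0([0,T],\M^s(\R^d))$ equipped with the sup-in-time norm, first for a small time $T$ and then extended to $[0,1]$ by means of uniform a priori bounds. Given a fixed curve $t\mapsto\mu_t$, the coefficients $v[\mu_t]$ and $h[\mu_t]$ become known data, so I would first solve the \emph{frozen linear problem}
\begin{equation*}
\p_t\nu_t+\div(v[\mu_t]\,\nu_t)=h[\mu_t],\qquad \nu_{|t=0}=\mu_0.
\end{equation*}
By \ref{H2} the field $v[\mu_t]$ is uniformly Lipschitz in space and bounded, so Cauchy--Lipschitz yields a well-defined flow $\Phi^t_s$ that is a bi-Lipschitz homeomorphism of $\R^d$. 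The frozen problem is then solved explicitly by the Duhamel-type formula
\begin{equation*}
\nu_t=\Phi^t_{0\,\#}\mu_0+\int_0^t\Phi^t_{s\,\#}\,h[\mu_s]\,ds,
\end{equation*}
the integral being understood in $\M^s(\R^d)$. This defines a map $\mathcal{T}:\mu_\cdot\mapsto\nu_\cdot$ whose fixed point is the sought solution.

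Second, I would establish the a priori estimates, which are both part of the statement and the tool that lets the local contraction be iterated up to $t=1$. Since pushforward preserves total mass and $|h[\mu_s]|\le P$ by \ref{H3}, the bound $|\nu_t|\le|\mu_0|+Pt$ follows from the triangle inequality for $|\cdot|$. Because $|v|\le M$, each characteristic moves at most $Mt$, and as $\supp(h[\mu_s])\subset B_0(R)$ the support estimate $\supp(\nu_t)\subset B(0,R'+tM)$ follows. These bounds are uniform along the iteration.

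Third, and this is the technical heart, I would show that $\mathcal{T}$ is a contraction for small $T$. Given two input curves $\mu_\cdot,\tilde\mu_\cdot$ with flows $\Phi,\tilde\Phi$, I must bound $\|\nu_t-\tilde\nu_t\|^{a,b}$. The key ingredients are (i) the behavior of $\|\cdot\|^{a,b}$ under pushforward by a single Lipschitz map and under two different maps acting on the same measure, both following from the topological properties of the norm established earlier in the paper, and (ii) a Gronwall comparison of the two flows,
\begin{equation*}
\|\Phi^t_s-\tilde\Phi^t_s\|_{\C^0}\le K\int_s^t e^{L(t-\tau)}\,\|\mu_\tau-\tilde\mu_\tau\|^{a,b}\,d\tau,
\end{equation*}
which uses \ref{H1} to convert $\|v[\mu_\tau]-v[\tilde\mu_\tau]\|_{\C^0}$ into $K\|\mu_\tau-\tilde\mu_\tau\|^{a,b}$. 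Splitting $\nu_t-\tilde\nu_t$ into the homogeneous contribution $\Phi^t_{0\,\#}\mu_0-\tilde\Phi^t_{0\,\#}\mu_0$ and the source contributions, and applying \ref{H3} to control $\|h[\mu_s]-h[\tilde\mu_s]\|^{a,b}$, gives $\sup_{t\le T}\|\nu_t-\tilde\nu_t\|^{a,b}\le C(T)\sup_{t\le T}\|\mu_t-\tilde\mu_t\|^{a,b}$ with $C(T)\to0$ as $T\to0$. The main obstacle is exactly this step: because $\|\cdot\|^{a,b}$ is built from the generalized Wasserstein distance on the decomposition $\mu^++\nu^-$ versus $\mu^-+\nu^+$, the pushforward estimates must be run through that construction rather than through a naive mass splitting, and tracking the exact constants is what produces the coefficient $C_1=2L+2K(P+\min\{|\mu_0|,|\nu_0|\})+Q$.

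Finally, once the fixed point exists on $[0,1]$ by iteration, the continuous-dependence inequality follows by running the same comparison with two genuine solutions in place of iterates and closing with Gronwall's lemma; the exponent $C_1$ emerges from collecting the Lipschitz constant $L$ (appearing twice, through the spatial spreading of each flow and through the flow-comparison Gronwall exponent), the factor $K$ from \ref{H1} weighted by the mass bound $P+\min\{|\mu_0|,|\nu_0|\}$ entering the source-comparison terms, where the refinement $\min\{|\mu_0|,|\nu_0|\}$ rather than the full mass reflects the freedom in the optimal coupling underlying $\|\cdot\|^{a,b}$, and the source-Lipschitz constant $Q$ from \ref{H3}. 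The Lipschitz-in-time estimate $\|\mu_{t+\tau}-\mu_t\|^{a,b}\le C_2\tau$ then follows directly from the Duhamel formula, estimating the displacement of the flow over $[t,t+\tau]$, which contributes $bM(P(t+\tau)+|\mu_0|)$ through the bounded velocity $M$ acting on a mass at most $P(t+\tau)+|\mu_0|$, together with the newly injected source mass, which contributes $P$.
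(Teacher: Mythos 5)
Your route --- freeze the coefficients, solve the resulting linear problem by a Duhamel formula, and run a Banach fixed-point argument --- is a legitimate alternative to the paper's construction (the paper instead builds an explicit Euler/operator-splitting scheme, proves it is a Cauchy sequence, and obtains uniqueness from a separate $O(\tau^2)$ operator-splitting estimate). But as written your argument has a genuine gap at its foundation: the Banach fixed-point theorem requires a complete metric space, and $\left(\M^s(\R^d),\|\cdot\|^{a,b}\right)$ is \emph{not} complete --- Remark \ref{prop:notaBanach} exhibits a Cauchy sequence with no limit in $\M^s(\R^d)$ --- so neither is $\C^0([0,T],\M^s(\R^d))$ with the sup-in-time norm. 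This is not a formality; it is the central new difficulty of the signed-measure setting and precisely why the paper proves Theorem \ref{thm:Banach} (a Cauchy sequence converges provided it is tight and uniformly bounded in mass). To repair your argument you must restrict $\mathcal{T}$ to the subset of curves satisfying your a priori bounds $|\mu_t|\le|\mu_0|+Pt$ and $\supp\{\mu_t\}\subset B(0,R'+tM)$, check that this subset is invariant under $\mathcal{T}$, and prove that it is complete, which requires Theorem \ref{thm:Banach} together with the lower semicontinuity of the total variation and the stability of the support constraint under convergence in $\|\cdot\|^{a,b}$. None of this appears in your proposal.

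A second, smaller gap: the contraction yields existence and uniqueness of a fixed point of $\mathcal{T}$, i.e.\ of a solution of the Duhamel identity, whereas the theorem asserts uniqueness among \emph{distributional} solutions, and the continuous-dependence estimate must hold for arbitrary distributional solutions. You therefore need the extra step that every distributional solution of \eqref{transport_equation} solves the frozen linear equation with $w(t,x)=v[\mu_t](x)$ and that this linear equation has a unique (hence Duhamel) solution; this is the role of Lemma \ref{lemma:characteristics} in the paper and is silently assumed in your final paragraph. Note also that for signed measures one only has $|T\#\mu|\le|\mu|$ in general, so your mass estimate needs the injectivity of the flow map. With these points addressed, your approach would deliver the theorem, with the advantage that uniqueness comes for free from the contraction rather than from the paper's Step 3 splitting estimate; the price is the completeness analysis above, which the paper's route also needs but performs only once, for the single explicitly constructed approximating sequence.
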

\rev{A precise definition of measure-valued weak solution for equation \eqref{transport_equation} is provided at the beginning 
of Section \ref{sec:transport_equation}.}
\begin{remark}
We emphasize that the assumptions \ref{H2}-\ref{H3} are incompatible with a direct interpretation of 
the solution of \eqref{transport_equation} as the spatial derivative of a conservation law
and need to be relaxed in a future work.
Indeed, to draw a parallel between conservation laws and non-local equations,
discontinuous vector fields need to be considered. 
\end{remark}

The structure of the article is the following. In Section \ref{sec:preliminary}, we state and prove preliminary results which are needed for the 
rest of the paper. 
In Section \ref{sec:results}, we define the generalized Wasserstein distance for signed measures, we
show that it can be used to define a norm, and prove some topological properties.
Section \ref{sec:transport_equation} is devoted to the use of the norm defined here to guarantee existence, 
uniqueness, and stability to initial condition for the transport equation \eqref{transport_equation}.

\section{Measure theory and the Generalized Wasserstein distance}
\label{sec:preliminary}

In this section, we introduce the notations and state preliminary results. 
Throughout the paper, $\B(\R^d)$ is the space of Borel sets on $\R^d$, 
$\M(\R^d)$ is the space of Radon measures with finite mass (i.e. Borel regular, positive, and finite on every set).

\subsection{Reminders on measure theory}
In this section, $\mu$ and $\nu$ are in $\M(\R^d)$.
\begin{definition}We say that 
 \begin{itemize}
  \item $\mu << \nu$ if $\forall A \in \B(\R^d),\; (\nu(A) = 0) \Rightarrow (\mu(A)=0)$
  \item $\mu \leq \nu $ if $\forall A \in \B(\R^d), \; \mu(A) \leq \nu(A) $
 \item $\mu \perp \nu$ if there exists $E \in \B(\R^d)$ such that 
 $\mu(\R^d)= \mu( E)$ and $ \nu( E^c)=0$
 \end{itemize}

\end{definition}

The concept of largest common measure between measures is now recalled.

\begin{lemma}
 We consider $\mu$ and $\nu$ two measures in $\M(\R^d)$. Then, there exists  a unique 
 measure $\mu \wedge \nu$ which satisfies
 \begin{equation}
 \label{mu_wedge_nu_def}
  \mu \wedge \nu \leq \mu, \quad   \mu \wedge \nu \leq \nu, \quad (\eta \leq \mu \text{ and } \eta\leq \nu) \Rightarrow \eta \leq \mu \wedge \nu.
 \end{equation}
We refer to $  \mu \wedge \nu$ as the largest common measure to $\mu$ and $\nu$.
Moreover, denoting by $f$ the Radon Nikodym derivative of $\mu$ with respect to $\nu$, 
 i.e. the unique measurable function $f$ such that $\mu= f \nu + \nu_{\perp}$, with $\nu_{\perp}\perp \nu$,
 we have
 \begin{equation}\label{mu_wedge_nu}
  \mu \wedge \nu = \min\{f,1\} \nu. 
 \end{equation}
\end{lemma}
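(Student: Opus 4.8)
The plan is to exhibit an explicit candidate for $\mu\wedge\nu$ built from the Lebesgue--Radon--Nikodym decomposition, check that it satisfies the three defining properties in \eqref{mu_wedge_nu_def}, and then deduce uniqueness from the antisymmetry of the order $\leq$. This single argument establishes existence, uniqueness, and the formula \eqref{mu_wedge_nu} at once. The starting point is the decomposition already named in the statement: since $\mu,\nu\in\M(\R^d)$ have finite mass, we may write $\mu=f\nu+\nu_\perp$ with $f\geq 0$ measurable, $f\in L^1(\nu)$, and $\nu_\perp\perp\nu$, and we set $\eta:=\min\{f,1\}\nu$.

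The first two conditions are immediate. Because $\min\{f,1\}\leq f$ pointwise and $\nu_\perp\geq 0$, we get $\eta\leq f\nu\leq f\nu+\nu_\perp=\mu$; and because $\min\{f,1\}\leq 1$, we get $\eta\leq 1\cdot\nu=\nu$. So $\eta$ satisfies the first two requirements of \eqref{mu_wedge_nu_def}.

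The heart of the proof is the maximality (third) condition, which I expect to be the only delicate step. Let $\eta'\in\M(\R^d)$ satisfy $\eta'\leq\mu$ and $\eta'\leq\nu$. The inequality $\eta'\leq\nu$ forces $\eta'\ll\nu$, so Radon--Nikodym gives $\eta'=g\nu$, and testing $\eta'(A)\leq\nu(A)$ over Borel sets $A$ yields $0\leq g\leq 1$ $\nu$-a.e. To compare $g$ with $f$ I would use the mutual singularity of $\nu_\perp$ and $\nu$: choose $E\in\B(\R^d)$ with $\nu(E)=0$ and $\nu_\perp(E^c)=0$, so that for every Borel $A\subset E^c$ one has $\mu(A)=\int_A f\,d\nu$ while $\eta'(A)=\int_A g\,d\nu$. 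Applying $\eta'\leq\mu$ to $A=\{g>f\}\cap E^c$ gives $\int_A (g-f)\,d\nu\leq 0$ with a strictly positive integrand, hence $\nu(A)=0$; together with $\nu(E)=0$ this shows $g\leq f$ $\nu$-a.e. Combined with $g\leq 1$ $\nu$-a.e., we obtain $g\leq\min\{f,1\}$ $\nu$-a.e., so $\eta'=g\nu\leq\min\{f,1\}\nu=\eta$, which is exactly the maximality asserted in \eqref{mu_wedge_nu_def}. The subtlety here is precisely that one must restrict the comparison to the set $E^c$ carrying the $\nu$-absolutely continuous part, so that the singular mass $\nu_\perp$ does not corrupt the pointwise inequality between $g$ and $f$.

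Finally, for uniqueness, suppose two measures both satisfy \eqref{mu_wedge_nu_def}. Applying the maximality of each to the other produces mutual domination, and antisymmetry of $\leq$ forces the two to coincide. Since the explicit $\eta=\min\{f,1\}\nu$ verifies all three properties, it is the unique measure satisfying \eqref{mu_wedge_nu_def}, and this identification is precisely the formula \eqref{mu_wedge_nu}.
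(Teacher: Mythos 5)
Your proof is correct and follows essentially the same route as the paper: both take the candidate $\min\{f,1\}\nu$ built from the Lebesgue--Radon--Nikodym decomposition $\mu=f\nu+\nu_\perp$, check the three properties in \eqref{mu_wedge_nu_def}, and deduce uniqueness from the antisymmetry of $\leq$. The only divergence is in the maximality step, where you invoke Radon--Nikodym a second time to write a competitor as $g\nu$ and compare densities ($g\leq f$ and $g\leq 1$ $\nu$-a.e.\ on the set carrying the absolutely continuous part), whereas the paper argues by contradiction on measures of sets via the splitting $B=A\cap E\cap\{f>1\}$; the content is the same and your variant is, if anything, slightly cleaner.
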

\begin{proof}
 The uniqueness is clear using \eqref{mu_wedge_nu_def}. 
Existence is given by formula \eqref{mu_wedge_nu} as follows.
 First, it is obvious that $ \min\{f,1\} \nu \leq \nu $ and using $\mu= f \nu + \nu_{\perp}$, it is also clear that 
 $  \min\{f,1\} \nu \leq \mu $.
 Let us now assume by contradiction the existence of a measure $\eta$ and of $A \in \B(\R^d)$ such that
 \begin{equation}
 \label{absurd1}
 \eta\leq \mu, \quad \eta\leq \nu, \quad \eta(A) > \dst\int_A \min\{f,1\} d\nu.
 \end{equation}

Since $\nu_{\perp} \perp \nu$, there exists $E \in \B(\R^d)$ such that $\nu(A)= \nu(A\cap E)$ 
and $ \nu_{\perp}(A) = \nu_{\perp}(A\cap E^c)$.
Since $\eta \leq\nu$, we have 
\begin{equation*}
\eta(A\cap E) =\eta(A)  > \dst\int_{A\cap E} \min\{f,1\} d\nu.
\end{equation*}
We define
\begin{equation*}
 B = A\cap E\cap\{ f >1\}.
\end{equation*}
If $\nu(B)=0$, then $f\leq 1$ $\nu$-a.e., hence $\eta(A)\leq \dst\int_A \min\{f,1\} d\nu$. We then assume $\nu(B)>0$. Then
\begin{equation*}
 \begin{aligned}
&\eta(B)+ \eta((A\cap E) \setminus B) = \eta(A\cap E)> \dst\int_{B} \min\{f,1\} d\nu(x) + 
\dst\int_{(A\cap E) \setminus B} \min\{f,1\} d\nu \\
& = \int_B 1 d\nu+\int_{(A\cap E) \setminus B} f d\nu = \nu(B) + \mu((A\cap E) \setminus B) \end{aligned}
\end{equation*}
which contradicts the fact that $\eta\leq \nu$ and $\eta \leq \mu$.
This implies that $\eta$ satisfying \eqref{absurd1} does not exist, and then \eqref{mu_wedge_nu} holds.
\end{proof}

\begin{lemma}\label{lemma:prop_min_measures}
  Let $\mu$ and $\nu$ be two measures in $\M(\R^d)$. Then $\eta \leq \mu+ \nu$ implies $\eta - (\mu\wedge \eta) \leq \nu$.

\end{lemma}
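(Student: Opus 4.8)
The plan is to reduce the inequality between measures to a pointwise inequality between densities by dominating all three measures by a single reference measure. Concretely, I would set $\lambda = \mu + \nu + \eta$, which lies in $\M(\R^d)$ because each of $\mu,\nu,\eta$ has finite mass, and observe that $\mu,\nu,\eta \ll \lambda$. By Radon--Nikodym I write $\mu = m\lambda$, $\nu = n\lambda$, $\eta = e\lambda$ with $m,n,e \geq 0$ measurable. The point of this choice is that $\lambda$ serves as a common reference for all three comparisons at once, turning every measure inequality into a $\lambda$-a.e.\ inequality between these densities.

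The first substantive step is to identify $\mu \wedge \eta = \min\{m,e\}\,\lambda$. The bounds $\min\{m,e\}\,\lambda \leq \mu$ and $\min\{m,e\}\,\lambda \leq \eta$ are immediate from $\min\{m,e\} \leq m$ and $\min\{m,e\} \leq e$. For the maximality in \eqref{mu_wedge_nu_def}, any $\zeta$ with $\zeta \leq \mu$ and $\zeta \leq \eta$ satisfies $\zeta \ll \lambda$ (since $\lambda(A)=0$ forces $\mu(A)=0$, hence $\zeta(A)=0$), so $\zeta = z\lambda$ with $z \leq m$ and $z \leq e$ $\lambda$-a.e., whence $z \leq \min\{m,e\}$ and $\zeta \leq \min\{m,e\}\,\lambda$. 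By the uniqueness asserted in \eqref{mu_wedge_nu_def} this pins down $\mu \wedge \eta$; alternatively one obtains the same density formula by applying \eqref{mu_wedge_nu} with $\eta$ as reference. Consequently $\eta - (\mu \wedge \eta)$ has $\lambda$-density $e - \min\{m,e\} = (e-m)^+ \geq 0$, which in particular re-confirms that it is a positive measure.

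It remains to translate and verify the claim. The hypothesis $\eta \leq \mu + \nu$ reads $e \leq m + n$ $\lambda$-a.e., and the desired conclusion $\eta - (\mu \wedge \eta) \leq \nu$ is equivalent to $(e-m)^+ \leq n$ $\lambda$-a.e. This I would check pointwise: on $\{e \leq m\}$ the left-hand side vanishes, so $(e-m)^+ = 0 \leq n$; on $\{e > m\}$ it equals $e - m \leq (m+n) - m = n$. This settles the lemma. I do not expect a genuine obstacle here; the only care required is in justifying the density formula for $\mu \wedge \eta$ against the common reference $\lambda$ rather than the self-reference used in \eqref{mu_wedge_nu}, and in ensuring the relevant null sets are aligned so that the various $\lambda$-a.e.\ statements are simultaneously meaningful. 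The entire content of the proof is the reduction enabled by choosing $\lambda = \mu+\nu+\eta$.
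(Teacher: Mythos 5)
Your proof is correct, but it takes a different route from the paper's. The paper works directly on Borel sets using the Radon--Nikodym decomposition of $\mu$ relative to $\eta$, writing $\mu = f\eta+\eta_\perp$, and must then track the singular part via a set $E$ carrying $\eta$ and the set $B=A\cap\{f<1\}$ before concluding with the hypothesis $\eta\leq\mu+\nu$ evaluated on $B\cap E$. You instead dominate all three measures by $\lambda=\mu+\nu+\eta$, so that every measure in sight is absolutely continuous with respect to the same reference and no singular parts ever appear; the conclusion then collapses to the pointwise arithmetic $(e-m)^+\leq n$ given $e\leq m+n$. The price you pay is that the paper's formula \eqref{mu_wedge_nu} expresses $\mu\wedge\eta$ as a density against $\eta$ rather than against $\lambda$, so you must re-derive $\mu\wedge\eta=\min\{m,e\}\,\lambda$ yourself --- which you do correctly via the maximality characterization in \eqref{mu_wedge_nu_def} (any competitor $\zeta$ is $\ll\lambda$, hence has a density bounded by both $m$ and $e$ a.e.). What your approach buys is a shorter and more transparent endgame with no case analysis over $E$ and $E^c$; what the paper's buys is that it leans only on the already-established formula \eqref{mu_wedge_nu} without introducing an auxiliary dominating measure. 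Both are complete proofs.
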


\begin{proof}

Take $A$ a Borel set.  We write $\mu = f \eta + \eta_{\perp},$ with $\eta_{\perp} \perp \eta$.
Then $\eta \wedge \mu= \min\{f,1\} \eta$, and we can write
$$\eta(A) - (\eta\wedge \mu)(A) = \int_A\Big(\max\{1-f,0\} \Big)  d\eta.$$
Define $B= A\cap \{f <1\}$, and $E$ such that
$\eta(A \cap E)= \eta(A)$ and $\eta_{\perp}(A \cap E^c)= \eta_{\perp}(A)$. It then holds,
\begin{equation*}
\eta(A) - (\eta\wedge \mu)(A)= \int_{B\cap E}(1-f) d\eta(x) = \eta(B \cap E) + \eta_{\perp}(B\cap E) - \mu(B \cap E) \leq \nu(B \cap E) \leq \nu(A). 
\end{equation*}
Since this estimate holds for any Borel set $A$, the statement is proved.

\end{proof}
\subsection{Signed measures}

We now introduce signed Radon measures, that are measures  
$\mu$ that can be written as $\mu=\mu_+-\mu_-$ with 
$\mu_+,\mu_- \in \M(\R^d)$. We denote with $\M^s(\R^d)$ the space of such signed Radon measures.

For $\mu\in \M^s(\R^d)$, we define 
$|\mu|= |\mu_+^J|+ |\mu_-^J|$ where $(\mu_+^J,\mu_-^J)$ is the unique 
Jordan decomposition of $\mu$, i.e. $\mu=\mu_+^J-\mu_-^J$ with $\mu_+^J\perp \mu_-^J$. 
Observe that $|\mu|$ is always finite, since $\mu_+^J,\mu_-^J\in  \M(\R^d)$.
\rev{
\begin{definition}[Push-forward]
For $\mu\in\M^s(\R^d)$ and $T:\R^d\to\R^d$ a Borel map, 
the \textit{push-forward} $T\#\mu$ is the measure on $\R^d$ defined by 
$T\#\mu(B)= \mu(T^{-1}(B))$ for any Borel set $B\subset\R^d$.
\end{definition}}
We now remind the definition of tightness for a sequence
in $\M^s(\R^d)$.
\begin{definition}
 A sequence $(\mu_n)_{n\in\N}$ of measures in $\M(\R^d)$
 is tight if for each $\eps>0,$ there is a compact 
set $K \subset \R^d$ such that for all $n\geq 0,$ $\mu_n(\R^d\setminus K)<\eps$.
 A sequence $(\mu_n)_{n\in\N}$ of signed measures of $\M^s(\R^d)$
 is tight if the sequences $(\mu_n^+)_{n\in\N}$ and $(\mu_n^-)_{n\in\N}$ given 
 by the Jordan decomposition are both tight.
\end{definition}
\rev{
For a sequence of probability measures, weak and narrow convergences and are equivalent.
It is not the case for signed measure and we precise here what we call narrow convergence.
In the present paper,  $\C^0(\R^d;\R)$ is the set of continuous functions, 
$\C^0_b(\R^d;\R)$ is the set of bounded continuous functions,
$\C^0_c(\R^d;\R)$ is the set of continuous functions 
 with compact support on $\R^d$, and $\C^0_0(\R^d;\R)$ 
 is the set of continuous functions on $\R^d$ that vanish at infinity.
\begin{definition}[Narrow convergence for signed measures]
  \item A sequence $(\mu_n)_{n\in\N}$ of measures in $\M^s(\R^d)$ is said to converge narrowly to 
  $\mu$ if for all $\varphi \in C^0_b(\R^d;\R)$, $\int_{\R^d} \varphi(x)d\mu_n(x) \to \int_{\R^d} \varphi(x)d\mu(x)$.
\\
  \item A sequence $(\mu_n)_{n\in\N}$ of measures in $\M^s(\R^d)$ is said to converge vaguely to 
 $\mu$ if for all $\varphi \in C_c^0(\R^d;\R)$, $\int_{\R^d} \varphi(x)d\mu_n(x) \to \int_{\R^d} \varphi(x)d\mu(x)$.
\end{definition}
}
\begin{lemma}[\rev{Weak compactness for positive measures}]
\label{lemma:weak_compactness}
Let $\mu_n$ be a sequence of  measures in $\M(\R^d)$ that are uniformly bounded in mass. We can then extract a subsequence $\mu_{\phi(n)}$ such that 
$\mu_{\phi(n)}$ \rev{converges vaguely to}
$\mu$
for some $\mu \in \M(\R^d)$.
    \end{lemma}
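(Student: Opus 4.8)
The plan is to combine a diagonal extraction with the Riesz--Markov representation theorem, exploiting the separability of $C_c^0(\R^d;\R)$ in the sup norm. First I would fix a countable family $(\varphi_k)_{k\in\N}$ that is dense in $C_c^0(\R^d;\R)$ for $\|\cdot\|_\infty$; such a family exists because $\R^d$ is separable and $\sigma$-compact. Writing $m=\sup_n|\mu_n|<\infty$ for the uniform mass bound, each scalar sequence $\big(\int_{\R^d}\varphi_k\,d\mu_n\big)_{n}$ is bounded, since $\big|\int_{\R^d}\varphi_k\,d\mu_n\big|\leq \|\varphi_k\|_\infty\, m$. A Cantor diagonal argument then produces a single subsequence $\mu_{\phi(n)}$ along which $\int_{\R^d}\varphi_k\,d\mu_{\phi(n)}$ converges for every $k$.

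Next I would upgrade the convergence from the dense family to all of $C_c^0(\R^d;\R)$. Given $\varphi\in C_c^0(\R^d;\R)$ and $\eps>0$, pick $\varphi_k$ with $\|\varphi-\varphi_k\|_\infty<\eps$; then for $n,n'$ large the triangle inequality together with the uniform bound $m$ gives $\big|\int_{\R^d}\varphi\,d\mu_{\phi(n)}-\int_{\R^d}\varphi\,d\mu_{\phi(n')}\big|\leq 2m\eps+\big|\int_{\R^d}\varphi_k\,d(\mu_{\phi(n)}-\mu_{\phi(n')})\big|$, so the sequence $\int_{\R^d}\varphi\,d\mu_{\phi(n)}$ is Cauchy and hence convergent. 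This defines a linear functional $L(\varphi)=\lim_n\int_{\R^d}\varphi\,d\mu_{\phi(n)}$ on $C_c^0(\R^d;\R)$, and $L$ is positive, i.e. $\varphi\geq 0\Rightarrow L(\varphi)\geq 0$, because each $\mu_{\phi(n)}$ is a positive measure.

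By the Riesz--Markov representation theorem there is a unique positive Radon measure $\mu$ on $\R^d$ with $L(\varphi)=\int_{\R^d}\varphi\,d\mu$ for all $\varphi\in C_c^0(\R^d;\R)$, which is exactly the statement that $\mu_{\phi(n)}$ converges vaguely to $\mu$. It remains to verify that $\mu$ has finite mass, so that $\mu\in\M(\R^d)$. For any compact $K$ I would choose $\varphi\in C_c^0(\R^d;\R)$ with $0\leq\varphi\leq 1$ and $\varphi\equiv 1$ on $K$; then $\mu(K)\leq\int_{\R^d}\varphi\,d\mu=\lim_n\int_{\R^d}\varphi\,d\mu_{\phi(n)}\leq m$. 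Taking an exhausting sequence of compacts $K_j\uparrow\R^d$ and using monotone convergence yields $\mu(\R^d)\leq m<\infty$, so $\mu\in\M(\R^d)$ as claimed.

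The step I expect to be the main obstacle is this last one. Since tightness is \emph{not} assumed, mass may escape to infinity, and one only obtains vague rather than narrow convergence; in particular mass is merely lower semicontinuous under a vague limit. The finiteness of $\mu$ must therefore be extracted from the uniform bound $m$ via compactly supported bump functions, and one must resist passing $\int_{\R^d}1\,d\mu_{\phi(n)}=|\mu_{\phi(n)}|$ to the limit directly, since the constant function $1$ lies neither in $C_c^0(\R^d;\R)$ nor in $C_0^0(\R^d;\R)$.
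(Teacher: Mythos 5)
Your proof is correct: the paper does not prove this lemma itself but simply cites \cite[Theorem 1.41]{Evans_measures}, and your argument (diagonal extraction over a countable dense subset of $\C^0_c(\R^d;\R)$, a density upgrade using the uniform mass bound, Riesz--Markov representation, and the bump-function estimate $\mu(K)\leq m$ to get finite mass) is precisely the standard proof of that cited result. Your closing caution about the lack of tightness and the possible loss of mass in the vague limit is also exactly the point the paper makes right after the lemma.
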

\rev{A proof can be found in \cite[Theorem 1.41]{Evans_measures}. Notice that
in \cite{Evans_measures}, vague convergence is called weak convergence. 
In \cite{Hanin99,Villani_topics} however, weak convergence refers to what we define here as narrow convergence.
Notice that if a sequence of positive measures $\mu_n$ converges vaguely to $\mu$ and if $(\mu_n)_n$ is tight, then 
$\mu_n$ converges narrowly to $\mu$.}
%
%

\subsection{Properties of the generalized Wasserstein distance}

In this section, we remind key properties of the generalized Wasserstein distance. 
The usual Wasserstein distance $W_p(\mu,\nu)$ was defined 
between two measures $\mu$ and $\nu$ of same mass $|\mu|= |\nu|$, see more details in \cite{Villani_topics}. 
\rev{\begin{definition}[Transference plan]
           A transference plan between two positive measures of same mass $\mu$ and $\nu$  
is a measure $\pi \in \mathcal{P}(\R^d,\R^d)$ which satisfies 
for all $A,B \in \mathcal{B}(\R^d)$
\begin{equation*}
 \pi(A\times \R^d) = \mu(A), \quad  \pi(\R^d\times B) =\nu(B).
\end{equation*}
     \end{definition}
}
\rev{Note that transference plans are \textit{not} probability measures in general, as their mass is $|\mu|=|\nu|$, the common mass of both marginals.}
We denote by $\Pi(\mu,\nu)$ the set of transference plans between $\mu$ and $\nu$.
The p-Wasserstein distance for positive Radon measures of same mass is defined as
\begin{equation*}
 W_{p}(\mu , \nu) = \left(\min_{\pi \in \Pi(\mu,\nu)} \dst\int_{\R^d \times \R^d}|x-y|^pd \pi (x,y)\right)^{\frac 1p}.
\end{equation*}

It was extended to positive measures having possibly different mass in \cite{PR14,PR16}, 
where the authors introduce the distance $W_p^{a,b}$ on the space $\M(\R^d)$ of Radon measures 
with finite mass.
The formal definition is the following.
\begin{definition}[Generalized Wasserstein distance \cite{PR14}] 
Let $\mu, \nu$ be two positive measures in $\M(\R^d)$.
The generalized Wasserstein distance between $\mu$ and $\nu$ is given by 
\begin{equation}
 W_p^{a,b}(\mu,\nu) = \left(  \inf_{\underset{|\tilde{\mu}|=|\tilde{\nu}|}{\tilde{\mu}, \tilde{\nu} \in \mathcal{M}(\R^d)}} a^p(|\mu - \tilde{\mu}| +|\nu- \tilde{\nu}|)^p+ b^p W_p^p(\tilde{\mu},\tilde{\nu}) \right)^{1/p}.\label{e-wp}
\end{equation}
 \end{definition}
We notice that 
\rev{
\begin{equation}
  W_p^{\lambda a,\lambda b}= \lambda W_p^{a,b}, \quad \lambda >0,
\end{equation}
}
and in particular
\begin{equation}
 \label{scaling}
 W_p^{a,b}= \dfrac{b}{b'} \;W_p^{a',b'}, \quad \text{ for } \dfrac{a}{b}=\dfrac{a'}{b'}.
\end{equation}

\rev{
The following lemma is useful to derive properties for the generalized Wasserstein distance.
\begin{lemma}
\label{lemma:infimum}
The infimum in \eqref{e-wp} is always attained. Moreover, 
there always exists a minimizer that satisfy the additional 
constraint $\tilde\mu\leq \mu,\;\tilde\nu\leq \nu$.  
     \end{lemma}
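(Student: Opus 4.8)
The plan is to prove the two assertions in the reverse order to the statement: I first establish the reduction to pairs satisfying $\tilde\mu\leq\mu$ and $\tilde\nu\leq\nu$, and then use this constraint to run the direct method and produce a minimizer. Writing $F(\tilde\mu,\tilde\nu)=a^p(|\mu-\tilde\mu|+|\nu-\tilde\nu|)^p+b^pW_p^p(\tilde\mu,\tilde\nu)$, the reason for doing the constraint first is that domination by the fixed measures $\mu,\nu$ forces $|\tilde\mu|\leq|\mu|$, $|\tilde\nu|\leq|\nu|$ and, crucially, yields tightness of every minimizing sequence for free.

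\textbf{Reduction step.} Given an admissible pair $(\tilde\mu,\tilde\nu)$ with $|\tilde\mu|=|\tilde\nu|$, let $\pi\in\Pi(\tilde\mu,\tilde\nu)$ be an optimal plan for $W_p(\tilde\mu,\tilde\nu)$, which exists by the classical theory for positive measures of equal mass. Decompose $\tilde\mu=(\tilde\mu\wedge\mu)+r$ with $r=\tilde\mu-\tilde\mu\wedge\mu\geq0$ the excess of $\tilde\mu$ over $\mu$; writing $\tilde\mu=g\mu+\mu_\perp$ shows that $\mu-\tilde\mu\wedge\mu$ (carried by $\{g<1\}$) and $r$ (carried by $\{g>1\}$ and $\supp\mu_\perp$) are mutually singular, so $|\mu-\tilde\mu|=(|\mu|-|\tilde\mu\wedge\mu|)+|r|$. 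I now remove the excess $r$ from $\tilde\mu$ and the mass it is transported to from $\tilde\nu$: set $\tilde\mu'=\tilde\mu\wedge\mu\leq\mu$, let $\rho=dr/d\tilde\mu\in[0,1]$, and let $s$ be the second marginal of $\rho(x)\pi$, so that $0\leq s\leq\tilde\nu$ and $|s|=|r|$; set $\tilde\nu'=\tilde\nu-s$. Then $|\tilde\mu'|=|\tilde\nu'|$, the plan $(1-\rho(x))\pi$ lies in $\Pi(\tilde\mu',\tilde\nu')$ and has no larger cost, so $W_p(\tilde\mu',\tilde\nu')\leq W_p(\tilde\mu,\tilde\nu)$, while $|\mu-\tilde\mu'|=|\mu-\tilde\mu|-|r|$ and $|\nu-\tilde\nu'|\leq|\nu-\tilde\nu|+|s|=|\nu-\tilde\nu|+|r|$ give $|\mu-\tilde\mu'|+|\nu-\tilde\nu'|\leq|\mu-\tilde\mu|+|\nu-\tilde\nu|$; hence $F(\tilde\mu',\tilde\nu')\leq F(\tilde\mu,\tilde\nu)$. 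This makes the first marginal dominated, and applying the symmetric surgery to $(\tilde\mu',\tilde\nu')$ on the $\nu$-side only removes further mass from $\tilde\mu'$ (so it stays $\leq\mu$), producing $(\tilde\mu'',\tilde\nu'')$ with $\tilde\mu''\leq\mu$, $\tilde\nu''\leq\nu$ and $F(\tilde\mu'',\tilde\nu'')\leq F(\tilde\mu,\tilde\nu)$. Thus the infimum over the constrained set equals the infimum in \eqref{e-wp}.

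\textbf{Existence.} It now suffices to minimize $F$ over $\mathcal A=\{(\tilde\mu,\tilde\nu):\tilde\mu\leq\mu,\ \tilde\nu\leq\nu,\ |\tilde\mu|=|\tilde\nu|\}$. Take a minimizing sequence $(\tilde\mu_n,\tilde\nu_n)\in\mathcal A$. Domination bounds the masses and, by inner regularity of the fixed $\mu,\nu$, gives for each $\eps$ a compact $K$ with $\tilde\mu_n(\R^d\setminus K)\leq\mu(\R^d\setminus K)<\eps$ uniformly in $n$, so both sequences are tight. By Lemma \ref{lemma:weak_compactness} together with tightness I extract a subsequence converging narrowly, $\tilde\mu_n\to\tilde\mu^*$ and $\tilde\nu_n\to\tilde\nu^*$. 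Testing against constants gives $|\tilde\mu^*|=\lim|\tilde\mu_n|=\lim|\tilde\nu_n|=|\tilde\nu^*|$, and Portmanteau on open sets combined with outer regularity yields $\tilde\mu^*\leq\mu$, $\tilde\nu^*\leq\nu$, so $(\tilde\mu^*,\tilde\nu^*)\in\mathcal A$. On $\mathcal A$ the total variation term is actually continuous, since $|\mu-\tilde\mu|=|\mu|-|\tilde\mu|$ depends on $\tilde\mu$ only through its narrowly continuous mass; and the Wasserstein term is lower semicontinuous, which I obtain by extracting a narrow limit $\pi^*$ of optimal plans $\pi_n\in\Pi(\tilde\mu_n,\tilde\nu_n)$ (tight because their marginals are), checking $\pi^*\in\Pi(\tilde\mu^*,\tilde\nu^*)$, and using lower semicontinuity of $\pi\mapsto\int|x-y|^p\,d\pi$ for the nonnegative continuous cost. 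Hence $F(\tilde\mu^*,\tilde\nu^*)\leq\liminf_n F(\tilde\mu_n,\tilde\nu_n)$ and $(\tilde\mu^*,\tilde\nu^*)$ is the desired constrained minimizer, which by the reduction step is a global minimizer in \eqref{e-wp}.

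\textbf{Main obstacle.} The delicate point is the reduction step: one must shrink both marginals simultaneously while preserving the equal-mass constraint and not increasing the total cost, and the only object coupling the two marginals is the optimal plan $\pi$. This forces me to split $\pi$ along the excess density $\rho$ of $\tilde\mu$ over $\mu$ and read off the corresponding piece $s$ of $\tilde\nu$, which is what makes the surgery work. Once the constraint is installed the compactness is immediate from domination; the only remaining technical check is the lower semicontinuity of $W_p$, which is handled by passing to a limit of the optimal plans.
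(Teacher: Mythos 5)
Your proof is correct: the surgery along the optimal plan (removing the excess $r=\tilde\mu-\tilde\mu\wedge\mu$ together with its image $s$ under $\pi$, then symmetrizing) legitimately reduces to competitors with $\tilde\mu\leq\mu$, $\tilde\nu\leq\nu$, after which domination gives tightness and the direct method closes the argument. The paper itself does not prove this lemma but defers to \cite{PR14}, where the argument is essentially the same reduction-plus-compactness scheme, so your proposal matches the intended proof.
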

     The proof can be found in \cite{PR14}.\\
}

For $f\in \C^0_c(\R^d;\R),$ we define
\begin{equation*}
\|f\|_{\infty}=\sup_{x\in\R^d}|f(x)| , \qquad\|f\|_{Lip}=\sup_{x\neq y} \dfrac{|f(x)-f(y)|}{|x-y|}.
\end{equation*}
We also denote by $\C^{0,Lip}_{c}(\R^d;\R)$ the subset of functions $f\in\C^0_c(\R^d;\R)$ for which it holds $\|f\|_{Lip}<+\infty$.

The following result is stated in \cite[Theorem 13]{PR16}.
\begin{lemma}[Kantorovitch Rubinstein duality]
\label{lemma:KR}
 For $\mu,\;\nu$ in $\M(\R^d)$, it holds
\begin{equation*}
 W_1^{1,1}(\mu,\nu) =  \sup \left\{\int_{\R^d}\varphi \; d(\mu-\nu); \; \varphi \in \C^{0,Lip}_{c},  \|\varphi\|_{\infty} \leq 1,\|\varphi\|_{Lip}\leq 1 \right\}.
\end{equation*}
 \end{lemma}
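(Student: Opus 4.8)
The plan is to prove the two inequalities separately, the first being elementary and the second being the genuine duality statement. Throughout I would work with the primal reformulation
\[
W_1^{1,1}(\mu,\nu)=\inf\left\{\int_{\R^d\times\R^d}|x-y|\,d\pi(x,y)+|\mu-\pi_1|+|\nu-\pi_2|\right\},
\]
the infimum being over positive $\pi$ on $\R^d\times\R^d$ whose marginals satisfy $\pi_1\le\mu$ and $\pi_2\le\nu$. This is equivalent to \eqref{e-wp} with $a=b=1$: by Lemma~\ref{lemma:infimum} an optimal pair $(\tilde\mu,\tilde\nu)$ can be chosen with $\tilde\mu\le\mu$, $\tilde\nu\le\nu$, and then $\pi$ is an optimal plan between the equal-mass measures $\tilde\mu,\tilde\nu$, so that $\pi_1=\tilde\mu$, $\pi_2=\tilde\nu$.

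For the easy inequality ($\sup\le W_1^{1,1}$) I would fix an admissible $\varphi$ and any competitor $(\tilde\mu,\tilde\nu)$ with $\tilde\mu\le\mu$, $\tilde\nu\le\nu$, $|\tilde\mu|=|\tilde\nu|$, and split
\[
\int\varphi\,d(\mu-\nu)=\int\varphi\,d(\mu-\tilde\mu)-\int\varphi\,d(\nu-\tilde\nu)+\int\varphi\,d(\tilde\mu-\tilde\nu).
\]
The first two integrals are bounded by $|\mu-\tilde\mu|$ and $|\nu-\tilde\nu|$ using $\|\varphi\|_{\infty}\le1$ and the positivity of $\mu-\tilde\mu$, $\nu-\tilde\nu$; the last is bounded by $W_1(\tilde\mu,\tilde\nu)$ by the classical Kantorovich--Rubinstein duality, applicable since $\tilde\mu,\tilde\nu$ share the same mass and $\|\varphi\|_{Lip}\le1$. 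Taking the infimum over $(\tilde\mu,\tilde\nu)$ and then the supremum over $\varphi$ gives the inequality.

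For the hard inequality ($W_1^{1,1}\le\sup$) the plan is to dualize. Writing the inner transport by the classical duality $W_1(\tilde\mu,\tilde\nu)=\sup_{\|\psi\|_{Lip}\le1}\int\psi\,d(\tilde\mu-\tilde\nu)$ turns $W_1^{1,1}$ into an inf--sup, and the crux is to exchange infimum and supremum. I would justify this either by Fenchel--Rockafellar duality applied to the primal $\inf_\pi[\cdots]$, mirroring Villani's proof of the classical statement, the only new feature being the two total-variation penalties which produce exactly the extra constraints $\varphi\le1$ and $\varphi\ge-1$, or by a Sion-type minimax theorem, the compactness for the $(\tilde\mu,\tilde\nu)$-variable being supplied by the domination $\tilde\mu\le\mu$, $\tilde\nu\le\nu$ together with Lemma~\ref{lemma:weak_compactness} and tightness. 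After the exchange, performing the (explicit, pointwise) inner minimization over $\tilde\mu,\tilde\nu$ and eliminating the Lagrange multiplier of the equal-mass constraint (a constant shift of $\psi$, which leaves $\|\cdot\|_{Lip}$ unchanged) should produce
\[
W_1^{1,1}(\mu,\nu)=\sup_{\|\varphi\|_{Lip}\le1}\left[\int\varphi\,d(\mu-\nu)+\int\min\{1-\varphi,0\}\,d\mu+\int\min\{1+\varphi,0\}\,d\nu\right].
\]

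It then remains to discard the penalties and to pass to compactly supported potentials. For the former I would clip: replacing $\varphi$ by $\bar\varphi=\max\{-1,\min\{1,\varphi\}\}$ keeps $\|\bar\varphi\|_{Lip}\le1$ (a median of $1$-Lipschitz functions is $1$-Lipschitz), annihilates both penalties, and does not decrease the objective, via the pointwise identities $\varphi+\min\{1-\varphi,0\}=\min\{1,\varphi\}\le\bar\varphi$ and $\varphi-\min\{1+\varphi,0\}=\max\{-1,\varphi\}\ge\bar\varphi$; this already yields the supremum over $\{\|\varphi\|_{\infty}\le1,\ \|\varphi\|_{Lip}\le1\}$. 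For the latter, given such a $\varphi$ and $\eps>0$, I would pick $R$ with $(|\mu|+|\nu|)(\R^d\setminus B_0(R))<\eps$ and truncate by $\varphi_R=\mathrm{median}(\varphi,-\lambda_R,\lambda_R)$ with $\lambda_R(x)=\max\{0,R+1-|x|\}$: then $\varphi_R\in\C^{0,Lip}_c$, $\|\varphi_R\|_{\infty}\le1$, $\|\varphi_R\|_{Lip}\le1$, and $\varphi_R=\varphi$ on $B_0(R)$, so $\int\varphi_R\,d(\mu-\nu)$ differs from $\int\varphi\,d(\mu-\nu)$ by at most $2\eps$. The one substantial obstacle is justifying the inf--sup exchange with no duality gap on the non-compact space $\R^d$, that is, verifying the continuity/coercivity hypotheses of Fenchel--Rockafellar (or the compactness hypotheses of the minimax theorem); the remaining steps, namely the splitting, the clipping and the median truncation, are elementary once the duality is in hand.
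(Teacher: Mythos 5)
The paper itself offers no proof of this lemma: it is imported verbatim from \cite[Theorem 13]{PR16}, so there is no in-paper argument to measure yours against, and I judge the proposal on its own terms. The architecture is right, and the easy inequality is complete and correct: the splitting $\int\varphi\,d(\mu-\nu)=\int\varphi\,d(\mu-\tilde\mu)-\int\varphi\,d(\nu-\tilde\nu)+\int\varphi\,d(\tilde\mu-\tilde\nu)$, the bound $\|\varphi\|_\infty\le 1$ on the positive remainders, the trivial half of the classical duality on the equal-mass pair, and the use of Lemma \ref{lemma:infimum} to restrict to $\tilde\mu\le\mu$, $\tilde\nu\le\nu$ are exactly what is needed. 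The penalized dual formula you write down is the correct one, and the clipping and median-truncation steps that follow it are sound (the identities $\varphi+\min\{1-\varphi,0\}=\min\{1,\varphi\}$ and $\varphi-\min\{1+\varphi,0\}=\max\{-1,\varphi\}$, plus the fact that a median of $1$-Lipschitz functions is $1$-Lipschitz, do the job).

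The genuine gap is that the entire content of the hard inequality --- the absence of a duality gap in the inf--sup exchange --- is deferred to an unverified invocation of Fenchel--Rockafellar or Sion, and the hypotheses are not automatic here. Concretely: (1) the inner dual variable $\psi$ ranges over \emph{unbounded} $1$-Lipschitz functions, so for $\tilde\mu,\tilde\nu$ merely finite (possibly without first moments) the pairing $\int\psi\,d(\tilde\mu-\tilde\nu)$ may be ill-defined, and even when defined it is not lower semicontinuous in $(\tilde\mu,\tilde\nu)$ for the narrow topology on the compact convex set $\{0\le\tilde\mu\le\mu,\ 0\le\tilde\nu\le\nu\}$ --- which is precisely the semicontinuity a minimax theorem requires on the compact side; (2) dropping the equal-mass constraint when passing to the inf--sup is legitimate only after observing that the inner supremum is $+\infty$ off that constraint (shift $\psi$ by constants), and this is what actually eliminates the Lagrange multiplier you allude to, so it should be said. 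Both points are repairable --- for instance by first reducing to compactly supported $\mu,\nu$, which is harmless since $W_1^{1,1}(\mu,\mu\chi_{B_0(R)})\le\mu(\R^d\setminus B_0(R))$ and the supremum side enjoys the same uniform bound, after which all integrals are finite and one can restrict $\psi$ to a weak-$*$ compact ball of bounded Lipschitz functions --- but as written the crux of the theorem is asserted rather than proved. An alternative for the hard direction that avoids the minimax altogether is to encode the mass removal as transport to an adjoined cemetery point at unit cost and invoke the classical Kantorovich duality on the augmented space.
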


\begin{lemma}[Properties of the generalized Wasserstein distance]
\label{lemma:prop_gen_dist}
Let $\mu, \nu , \eta, \mu_1, \mu_2, \nu_1, \nu_2 $ be some positive measures 
 with finite mass on $\R^d$.
 The following properties hold
 
 \begin{enumerate}

  \item $W_p^{a,b}(\mu_1 + \mu_2 ,\nu_1 +\nu_2) \leq
  W_p^{a,b}(\mu_1  ,\nu_1 ) 
 + W_p^{a,b}(\mu_2 ,\nu_2) $.
 
  \item $W_1^{a,b}(\mu + \eta,\nu +\eta) = W_1^{a,b}(\mu,\nu), $

  \end{enumerate}

\end{lemma}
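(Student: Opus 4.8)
The plan is to handle the two properties separately, using Lemma~\ref{lemma:infimum} for the first and Lemma~\ref{lemma:KR} for the second. For the first property, I would invoke Lemma~\ref{lemma:infimum} to select, for $i=1,2$, minimizers $\tilde\mu_i\leq\mu_i$ and $\tilde\nu_i\leq\nu_i$ with $|\tilde\mu_i|=|\tilde\nu_i|$ realizing $W_p^{a,b}(\mu_i,\nu_i)$. Writing $s_i=a(|\mu_i-\tilde\mu_i|+|\nu_i-\tilde\nu_i|)$ and $t_i=bW_p(\tilde\mu_i,\tilde\nu_i)$, so that $W_p^{a,b}(\mu_i,\nu_i)=(s_i^p+t_i^p)^{1/p}$, I would then test the definition \eqref{e-wp} of $W_p^{a,b}(\mu_1+\mu_2,\nu_1+\nu_2)$ against the competitor $(\tilde\mu_1+\tilde\mu_2,\tilde\nu_1+\tilde\nu_2)$. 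This is admissible since $|\tilde\mu_1+\tilde\mu_2|=|\tilde\mu_1|+|\tilde\mu_2|=|\tilde\nu_1|+|\tilde\nu_2|=|\tilde\nu_1+\tilde\nu_2|$.

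Two elementary facts then control the two terms. Because $\mu_i-\tilde\mu_i\geq0$, the removed masses simply add, $|(\mu_1+\mu_2)-(\tilde\mu_1+\tilde\mu_2)|=|\mu_1-\tilde\mu_1|+|\mu_2-\tilde\mu_2|$ and likewise for $\nu$; and because the sum of two transference plans is a transference plan between the sums of the marginals, $W_p^p(\tilde\mu_1+\tilde\mu_2,\tilde\nu_1+\tilde\nu_2)\leq W_p^p(\tilde\mu_1,\tilde\nu_1)+W_p^p(\tilde\mu_2,\tilde\nu_2)$. Plugging these into \eqref{e-wp} gives
\[
W_p^{a,b}(\mu_1+\mu_2,\nu_1+\nu_2)^p\leq (s_1+s_2)^p+(t_1^p+t_2^p).
\]
The only real subtlety is to pass from this $p$-th-power bound back to the \emph{non}-powered subadditivity claimed: using $t_1^p+t_2^p\leq(t_1+t_2)^p$ (valid for $p\geq1$ and $t_i\geq0$) followed by the Minkowski inequality for the $\ell^p$-norm on $\R^2$ applied to $(s_1,t_1)$ and $(s_2,t_2)$, one gets $(s_1+s_2)^p+(t_1+t_2)^p\leq\big((s_1^p+t_1^p)^{1/p}+(s_2^p+t_2^p)^{1/p}\big)^p$; taking $p$-th roots of the chain finishes the first property.

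For the second property I would prove the two inequalities separately. The bound $W_1^{a,b}(\mu+\eta,\nu+\eta)\leq W_1^{a,b}(\mu,\nu)$ is immediate from the first property applied to $(\mu,\nu)$ and $(\eta,\eta)$, together with $W_1^{a,b}(\eta,\eta)=0$ (take $\tilde\mu=\tilde\nu=\eta$ in \eqref{e-wp}). The reverse inequality is the crux, and is where $p=1$ is essential. Here I would use the Kantorovich--Rubinstein duality of Lemma~\ref{lemma:KR}, extended from $a=b=1$ to general $a,b$ through the scaling relations \eqref{e-wp}--\eqref{scaling}, in the form $W_1^{a,b}(\mu,\nu)=\sup\{\int_{\R^d}\varphi\,d(\mu-\nu):\ \varphi\in\C^{0,Lip}_c,\ \|\varphi\|_\infty\leq a,\ \|\varphi\|_{Lip}\leq b\}$. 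Since the admissible set of test functions is identical on both sides and $(\mu+\eta)-(\nu+\eta)=\mu-\nu$, the two suprema coincide and equality follows at once. (Alternatively one can argue directly from \eqref{e-wp} by stripping $\eta$ off an optimal decomposition of the left-hand side, the point once more being that for $p=1$ a commonly added mass is transported at no cost; the duality route is cleaner.) I expect the derivation of the general-$(a,b)$ duality, and the correct recombination of the two $\ell^p$ terms in the first property, to be the two spots demanding care.
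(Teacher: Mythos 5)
Your treatment of item 1 is correct and complete: the competitor $(\tilde\mu_1+\tilde\mu_2,\tilde\nu_1+\tilde\nu_2)$, the additivity of the removed mass (which uses $\tilde\mu_i\leq\mu_i$, hence genuinely needs Lemma \ref{lemma:infimum}), the superadditivity $W_p^p(\tilde\mu_1+\tilde\mu_2,\tilde\nu_1+\tilde\nu_2)\leq W_p^p(\tilde\mu_1,\tilde\nu_1)+W_p^p(\tilde\mu_2,\tilde\nu_2)$ via the sum of plans, and the recombination through $t_1^p+t_2^p\leq(t_1+t_2)^p$ followed by Minkowski in $\ell^p(\R^2)$ are all sound. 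The paper simply cites \cite[Proposition 11]{PR14} for this item, so you have in effect reconstructed that external proof; this is a legitimate, self-contained alternative.

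For item 2 your strategy coincides with the paper's (reduce to Kantorovich--Rubinstein duality, where the common mass $\eta$ visibly cancels), but the one step you wave at is exactly the step the paper's proof is built to supply. The duality with constraints $\|\varphi\|_\infty\leq a$, $\|\varphi\|_{Lip}\leq b$ is true, but it does \emph{not} follow from the scaling relation \eqref{scaling}: that relation only connects parameter pairs with the same ratio $a/b$, so it can take you from $W_1^{1,1}$ to $W_1^{\lambda,\lambda}$ and no further. To reach general $(a,b)$ you need an additional mechanism that changes the ratio, and this is precisely the dilation identity $W^{a,b}(D_\lambda\#\mu,D_\lambda\#\nu)=W^{a,\lambda b}(\mu,\nu)$ with $D_\lambda:x\mapsto\lambda x$, which the paper proves by direct computation on the cost functional and then combines with $W^{a,b}=aW^{1,b/a}$ and the $W^{1,1}$ duality applied to the dilated measures (where $D_\lambda\#\eta$ still cancels). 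Your argument can be repaired along the same lines --- either by proving the general-$(a,b)$ duality via the substitution $\psi(x)=a\varphi(\lambda x)$ with $\lambda=b/a$, or by skipping the general duality altogether and working with the dilated measures as the paper does --- but as written the justification of the key identity is missing.
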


\begin{proof}
 The first property is taken from \cite[Proposition 11]{PR14}. 
\rev{For $a=b=1$,} the second statement is a direct consequence of the Kantorovitch-Rubinstein duality in Lemma \ref{lemma:KR} for $W^{1,1}$.
For general $a>0,\;b>0$, we proceed as follows.
Let $\mu,\nu$ be two measures. Define
$$C^{a,b}(\bar\mu,\bar\nu,\pi;\mu,\nu):=a(|\mu-\bar\mu|+|\nu-\bar\nu|)+b\int|x-y|\,d\pi(x,y),$$
where $\pi$ is a transference plan in $\Pi(\bar\mu,\bar\nu)$.
Define $D_\lambda:x\to \lambda x$ with $\lambda>0$ the dilation in $\R^n$.
It holds
\begin{equation*}
\begin{aligned}
C^{a,b}(D_\lambda\#\bar\mu,D_\lambda\#\bar\nu,&(D_\lambda\times D_\lambda)\#\pi;D_\lambda\#\mu,D_\lambda\#\nu)\\
&=a(|D_\lambda\#\mu-D_\lambda\#\bar\mu|+|D_\lambda\#\nu-D_\lambda\#\bar\nu|)+b\int|x-y|\,d(D_\lambda\times D_\lambda)\pi(x,y),\\
&=a(|\mu-\bar\mu|+|\nu-\bar\nu|)+b\int|\lambda x-\lambda y|\,d\pi(x,y)=C^{a,\lambda b}(\bar\mu,\bar\nu,\pi;\mu,\nu).
\end{aligned}
\end{equation*}
As a consequence, it holds
$$W^{a,b}(D_\lambda\#\mu,D_\lambda\#\nu)=W^{a,\lambda b}(\mu,\nu).$$

We now show that this implies $W^{a,b}(\mu+\eta,\nu+\eta)=W^{a,b}(\mu,\nu).$
Indeed, also applying Kantorovich-Rubinstein for $W^{1,1}$ and \eqref{scaling} with $a'=1,b'=\lambda=\frac{b}{a}$, it holds
\begin{eqnarray*}
&&W^{a,b}(\mu+\eta,\nu+\eta)=aW^{1,\frac{b}{a}}(\mu+\eta,\nu+\eta)=aW^{1,1}(D_\lambda\#\mu+D_\lambda\#\eta,D_\lambda\#\nu+D_\lambda\#\eta)=\\
&&=aW^{1,1}(D_\lambda\#\mu,D_\lambda\#\nu)=aW^{1,\frac{b}{a}}(\mu,\nu)=W^{a,b}(\mu,\nu).
\end{eqnarray*}
\end{proof}
%
%
 \begin{definition}[Image of a measure under a plan]
 \label{def:image}
    Let $\mu$ and $\nu$ two measures in $\M(\R^d)$ of same mass and $\pi \in \Pi(\mu,\nu)$.
     For $\eta \leq \mu$, we denote by $f$ the Radon-Nikodym derivative of $\eta $ with respect to $\mu$ and 
     by $\pi_f$ the transference plan defined by $\pi_f(x,y) =f(x) \pi (x,y)$. 
     Then, we define the image of $\eta$ under $\pi$ as the second marginal $\eta'$
     of $\pi_f$.
     \end{definition}
Observe that the second marginal satisfies  $\eta'\leq \nu$. Indeed, since $\eta\leq \mu$, it holds $f\leq 1$. Thus, for all Borel set $B$  of $\R^d$ we have
 \begin{equation*}
  \eta'(B) = \pi_f(\R^d\times B) \leq \pi(\R^d\times B) = \nu(B). 
 \end{equation*}

\section{Generalized Wasserstein norm for signed measures}

\label{sec:results}

In this section, we define the generalized Wasserstein distance for signed measures and prove some of its properties. The idea is to follow what was already done in \cite{AMS11} for generalizing the classical Wasserstein distance.

\begin{definition}[Generalized Wasserstein distance extended to signed measures]
 For $\mu, \nu$ two signed measures with finite mass over $\R^d$, we define
 \begin{equation*}
\mathbb{W}_1^{a,b}  (\mu, \nu) = {W}_1^{a,b} (\mu_+ + \nu_-, \mu_- + \nu_+),
 \end{equation*}
 where $\mu_+, \mu_-, \nu_+$ and $\nu_-$ are any measures in $\M(\R^d)$ such that 
 $\mu = \mu_+ - \mu_-$ and $\nu = \nu_+ - \nu_-$.
\label{def:GWDSM}
\end{definition}

\begin{proposition}
 The operator $\mathbb{W}_1^{a,b} $ is a distance on the space $\mathcal{M}^s(\R^d)$
 of signed measures with finite mass on $\R^d$. 
\end{proposition}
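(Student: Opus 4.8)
The plan is to check the four defining properties of a distance—independence of the chosen positive decomposition, symmetry, positivity together with identity of indiscernibles, and the triangle inequality. The engine behind all of them will be the cancellation identity from the second item of Lemma \ref{lemma:prop_gen_dist}, namely $W_1^{a,b}(\alpha+\eta,\beta+\eta)=W_1^{a,b}(\alpha,\beta)$ for positive measures, combined with the fact that $W_1^{a,b}$ is already a genuine distance on the cone $\M(\R^d)$ of positive finite measures.

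First I would establish that the right-hand side of Definition \ref{def:GWDSM} does not depend on the chosen decompositions. Given two writings $\mu=\mu_+-\mu_-=\bar\mu_+-\bar\mu_-$ and $\nu=\nu_+-\nu_-=\bar\nu_+-\bar\nu_-$, the equalities of positive measures $\mu_++\bar\mu_-=\bar\mu_++\mu_-$ and $\nu_-+\bar\nu_+=\bar\nu_-+\nu_+$ hold. Adding the positive measure $\bar\mu_-+\bar\nu_+$ to both arguments and using cancellation, one rewrites $W_1^{a,b}(\mu_++\nu_-,\mu_-+\nu_+)$ first as $W_1^{a,b}(\bar\mu_++\bar\nu_-+(\mu_-+\nu_+),\bar\mu_-+\bar\nu_++(\mu_-+\nu_+))$ and then, cancelling $\mu_-+\nu_+$, as $W_1^{a,b}(\bar\mu_++\bar\nu_-,\bar\mu_-+\bar\nu_+)$. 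Hence $\W_1^{a,b}$ is well defined. Symmetry, positivity and the identity of indiscernibles follow immediately: $\W_1^{a,b}(\nu,\mu)=W_1^{a,b}(\nu_++\mu_-,\nu_-+\mu_+)$ has exactly the two arguments of $\W_1^{a,b}(\mu,\nu)$ with their roles exchanged, so symmetry of $W_1^{a,b}$ closes that case; positivity is inherited from $W_1^{a,b}\geq0$; and $\W_1^{a,b}(\mu,\nu)=0$ is equivalent, by the corresponding property of $W_1^{a,b}$, to $\mu_++\nu_-=\mu_-+\nu_+$, i.e. to $\mu=\nu$.

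The main work lies in the triangle inequality, which is the step I expect to be the real obstacle, since this is exactly where the construction of \cite{AMS11} breaks down for $p\neq1$. For $\mu,\nu,\eta\in\M^s(\R^d)$, I would first use cancellation to bring two of the three quantities into a common ``ambient'' configuration:
\begin{equation*}
\W_1^{a,b}(\mu,\nu)=W_1^{a,b}(\mu_++\nu_-+\eta_-,\mu_-+\nu_++\eta_-),\qquad \W_1^{a,b}(\nu,\eta)=W_1^{a,b}(\mu_-+\nu_++\eta_-,\mu_-+\nu_-+\eta_+).
\end{equation*}
The crucial point is that the second argument of the first expression coincides with the first argument of the second expression, so the ordinary triangle inequality for the distance $W_1^{a,b}$ on positive measures yields
\begin{equation*}
W_1^{a,b}(\mu_++\nu_-+\eta_-,\mu_-+\nu_-+\eta_+)\leq \W_1^{a,b}(\mu,\nu)+\W_1^{a,b}(\nu,\eta).
\end{equation*}
Finally, cancelling the common term $\nu_-$ on the left identifies the left-hand side with $W_1^{a,b}(\mu_++\eta_-,\mu_-+\eta_+)=\W_1^{a,b}(\mu,\eta)$, which completes the argument.

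I would close by recording that both the cancellation identity and hence the whole argument are special to the exponent $p=1$: for $p\neq1$ the analogue of Lemma \ref{lemma:prop_gen_dist}(2) fails, which is consistent with the counterexample to the triangle inequality quoted in the introduction.
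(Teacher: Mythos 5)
Your proof is correct, and while it follows the same overall strategy as the paper (reduce everything to properties of $W_1^{a,b}$ on positive measures via the cancellation identity), it differs in two worthwhile details. For well-definedness, you compare two arbitrary decompositions directly by adding $\bar\mu_-+\bar\nu_+$ to both arguments and cancelling, whereas the paper compares an arbitrary decomposition to the Jordan decomposition and uses that the difference $(\mu_++\nu_-)-(\mu_+^J+\nu_-^J)$ is a \emph{positive} measure; your version avoids having to single out the Jordan decomposition (and likewise your identity-of-indiscernibles argument reads off $\mu=\nu$ from $\mu_++\nu_-=\mu_-+\nu_+$ by direct subtraction of finite measures, rather than invoking orthogonality of $\mu_+^J$ and $\mu_-^J$). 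For the triangle inequality, the paper adds $\nu_++\nu_-$ to both sides of $\W_1^{a,b}(\mu,\eta)$ and then splits using the \emph{subadditivity} property $W_1^{a,b}(\mu_1+\mu_2,\nu_1+\nu_2)\leq W_1^{a,b}(\mu_1,\nu_1)+W_1^{a,b}(\mu_2,\nu_2)$ (Lemma \ref{lemma:prop_gen_dist}, item 1), while you instead pad the two distances so that they share the common middle measure $\mu_-+\nu_++\eta_-$ and apply the ordinary metric triangle inequality of $W_1^{a,b}$, cancelling $\nu_-$ at the end. Your route needs only the cancellation identity plus the fact that $W_1^{a,b}$ is a distance on $\M(\R^d)$, so it uses slightly weaker inputs; the paper's route via subadditivity is shorter once Lemma \ref{lemma:prop_gen_dist} is in hand and is the same mechanism reused later for Lemma \ref{lemma:prop_gwd}. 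Both arguments hinge on cancellation, which is exactly the $p=1$ feature, consistent with your closing remark.
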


\begin{proof}
 First, we point out that the definition does not depend on the decomposition.
Indeed, if we consider two distinct decompositions, 
$\mu = \mu_+ - \mu_- = {\mu}^J_+ - \mu_-^J$, and 
$\nu = \nu_+ - \nu_- = \nu_+^J - {\nu_-}^J$, with the second one being the Jordan decomposition, 
then
we have $(\mu_+ + \nu_-) - ({\mu}_+^J + {\nu}_-^J) = (\mu_- + \nu_+) -({\mu}_-^J + {\nu}_+^J)$, 
and this is a positive measure since $\mu_+ \geq \mu_+^J$ and $\nu_+ \geq \nu_+^J$. 
The \rev{second} property of Lemma \ref{lemma:prop_gen_dist} then gives
\begin{eqnarray*}
&&{W}_1^{a,b} (\mu_+^J + \nu_-^J, \mu_-^J + \nu_+^J)=\\
&& W_1^{a,b}(\mu_+^J + \nu_-^J + (\mu_+ + \nu_-) - ({\mu}_+^J + {\nu}_-^J),\mu_-^J + \nu_+^J + (\mu_- + \nu_+) -({\mu}_-^J + {\nu}_+^J))=\\
&&{W}_1^{a,b} (\mu_+ + \nu_-, \mu_- + \nu_+).
\end{eqnarray*}

We now prove that $\mathbb{W}_1^{a,b}(\mu,\nu)=0$ implies $\mu = \nu$. 
As explained above, we can choose the Jordan decomposition for both $\mu$ and $\nu$.
Since $W_1^{a,b}$ is a distance, we obtain $\mu_+ + \nu_-= \mu_- + \nu_+$.
The orthogonality of $\mu_+$ and $\mu_-$ and of $\nu_+$ and $\nu_-$ 
implies that $\mu_+ = \nu_+$ and $\mu_-=\nu_-$,
and thus $\mu = \nu$.

We now prove the triangle inequality. 
We have $\mathbb{W}_1^{a,b}(\mu,\eta) = W_1^{a,b}(\mu_+ + \eta_-, \mu_- + \eta_+)$.
Using Lemma \ref{lemma:prop_gen_dist}, we have
\begin{equation*}
\begin{aligned}
 \mathbb{W}_1^{a,b}(\mu,\eta)& = {W}_1^{a,b}(\mu_++ \eta_-+ \nu_+ + \nu_-,\mu_-+ \eta_+ + \nu_+ + \nu_-)\\
&\leq {W}_1^{a,b}(\mu_++ \nu_-,\mu_-+ \nu_+ )+ {W}_1^{a,b}( \eta_-+ \nu_+ , \eta_++ \nu_-)\\
&=\mathbb{W}_1^{a,b}(\mu,\nu)+ \mathbb{W}_1^{a,b}(\nu,\eta).
\end{aligned}
\end{equation*}

\end{proof}

We also state the following lemma about adding and removing masses.
\begin{lemma}\label{lemma:prop_gwd}
 Let $\mu, \nu , \eta, \mu_1, \mu_2, \nu_1, \nu_2 $ in $\M^s(\R^d)$
 with finite mass on $\R^d$.
 The following properties hold
 
 \begin{itemize}
  
  \item $\W_1^{a,b}(\mu + \eta,\nu +\eta) = \W_1^{a,b}(\mu,\nu), $

  \item $\W_1^{a,b}(\mu_1 + \mu_2 ,\nu_1 +\nu_2) \leq
  \W_1^{a,b}(\mu_1  ,\nu_1 ) 
 + \W_1^{a,b}(\mu_2 ,\nu_2) $.
  \end{itemize}

\end{lemma}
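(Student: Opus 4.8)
The plan is to reduce both identities to their already-established positive-measure counterparts in Lemma \ref{lemma:prop_gen_dist}, relying on Definition \ref{def:GWDSM} together with the fact (proved in the preceding proposition) that $\W_1^{a,b}$ is independent of the chosen decomposition into positive measures. Throughout I would fix arbitrary positive decompositions $\mu = \mu_+ - \mu_-$, $\nu = \nu_+ - \nu_-$, $\eta = \eta_+ - \eta_-$, and likewise $\mu_i = \mu_i^+ - \mu_i^-$, $\nu_i = \nu_i^+ - \nu_i^-$ for $i=1,2$. The whole argument is then algebraic manipulation of these decompositions followed by a single invocation of Lemma \ref{lemma:prop_gen_dist}.

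For the first identity, I would observe that $\mu + \eta = (\mu_+ + \eta_+) - (\mu_- + \eta_-)$ and $\nu + \eta = (\nu_+ + \eta_+) - (\nu_- + \eta_-)$ are valid positive decompositions. Expanding Definition \ref{def:GWDSM} and regrouping the four positive measures gives
\begin{equation*}
\W_1^{a,b}(\mu+\eta,\nu+\eta) = W_1^{a,b}\big((\mu_+ + \nu_-) + (\eta_+ + \eta_-),\; (\mu_- + \nu_+) + (\eta_+ + \eta_-)\big).
\end{equation*}
Both arguments now carry the same extra positive measure $\eta_+ + \eta_-$, so the second property of Lemma \ref{lemma:prop_gen_dist} (invariance of $W_1^{a,b}$ under adding a common positive measure) removes it, leaving $W_1^{a,b}(\mu_+ + \nu_-,\, \mu_- + \nu_+) = \W_1^{a,b}(\mu,\nu)$.

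For the subadditivity, I would use $\mu_1 + \mu_2 = (\mu_1^+ + \mu_2^+) - (\mu_1^- + \mu_2^-)$ and the analogous decomposition of $\nu_1 + \nu_2$. Expanding the definition and regrouping yields
\begin{equation*}
\W_1^{a,b}(\mu_1+\mu_2,\nu_1+\nu_2) = W_1^{a,b}\big((\mu_1^+ + \nu_1^-) + (\mu_2^+ + \nu_2^-),\; (\mu_1^- + \nu_1^+) + (\mu_2^- + \nu_2^+)\big).
\end{equation*}
Applying the first (subadditivity) property of Lemma \ref{lemma:prop_gen_dist} to the two matched groups, and then reading each resulting term back through Definition \ref{def:GWDSM}, gives the bound $\W_1^{a,b}(\mu_1,\nu_1) + \W_1^{a,b}(\mu_2,\nu_2)$.

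The only point requiring care — and what passes for the "hard part" here — is the bookkeeping in the regrouping: one must verify that the positive and negative parts recombine into exactly the arguments prescribed by Definition \ref{def:GWDSM}, namely the common measure $\eta_+ + \eta_-$ in the first case and the correctly paired combinations $\mu_i^+ + \nu_i^-$ and $\mu_i^- + \nu_i^+$ in the second. There is no analytic obstacle: all the real content already lives in Lemma \ref{lemma:prop_gen_dist}, and lifting it to signed measures is a purely formal consequence of the definition.
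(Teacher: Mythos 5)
Your proposal is correct and follows essentially the same route as the paper: both items are reduced to the positive-measure properties of Lemma \ref{lemma:prop_gen_dist} by choosing the natural decompositions of the sums and regrouping so that either a common measure $\eta_+ + \eta_-$ appears on both sides or the terms split into the pairs $\mu_i^\pm + \nu_i^\mp$. The paper's proof performs exactly this bookkeeping.
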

\begin{proof}
 The proof is direct. For the first item, it holds
  \rev{
  \begin{eqnarray*}
   \W_1^{a,b}(\mu + \eta,\nu +\eta) &=&
 W_1^{a,b}([\mu_+ + \eta_+] + [\nu_- + \eta_-],[\nu_++\eta_+]+ [\mu_-+\eta_-]) \\
&=&W_1^{a,b}(\mu_+ + \nu_- +[\eta_+ +\eta_-],\nu_+ + \mu_- +[\eta_+ +\eta_-]) 
 \end{eqnarray*}
}
 which \rev{by Lemma \ref{lemma:prop_gen_dist} then equals $ W_1^{a,b} (\mu_+ + \nu_- ,\mu_- + \nu_+ ) =\W_1^{a,b}(\mu ,\nu ) $.} 
 
 For the second item, it holds
\begin{equation*}
 \begin{aligned}
  \W_1^{a,b}(\mu_1 + \mu_2 ,\nu_1 +\nu_2) &= W_1^{a,b}(\mu_{1,+} + \mu_{2,+}+ \nu_{1,-} +\nu_{2,-} ,\nu_{1,+} +\nu_{2,+} +\mu_{1,-} + \mu_{2,-})\\
&\leq  W_1^{a,b}(\mu_{1,+} + \nu_{1,-}  ,\nu_{1,+} +\mu_{1,-} ) + W_1^{a,b}(\mu_{2,+}+ \nu_{2,-} ,\nu_{2,+} + \mu_{2,-})\\
&=\W_1^{a,b}(\mu_1  ,\nu_1 )  + \W_1^{a,b}(\mu_2 ,\nu_2),
\end{aligned}
\end{equation*}
 \rev{where the inequality comes from Lemma \ref{lemma:prop_gen_dist}}.
\end{proof}

\begin{definition}
 \label{def:norm}
 For $\mu \in \M^s(\R^d)$ and $a>0,\; b>0$, we define
 \begin{equation*}
  \|\mu\|^{a,b}=\W_1^{a,b}(\mu,0)=W_1^{a,b}(\mu_+,\mu_-),
 \end{equation*}
where $\mu_+$ and $\mu_-$ are any measures of $\M(\R^d)$ such that
$\mu=\mu_+-\mu_-$.
\end{definition}
 It is clear that the definition of $\|\mu\|^{a,b}$ does not depend on the choice of $\mu_+,\mu_-$ as a consequence of the corresponding property for $W_1^{a,b}$.
\begin{proposition}
 The space of signed measures $(\M^s(\R^d),\|.\|^{a,b})$ is a normed vector space. 
\end{proposition}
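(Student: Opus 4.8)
The plan is to check the three axioms of a norm for $\|\cdot\|^{a,b}$ on the vector space $\M^s(\R^d)$: positive definiteness, absolute homogeneity, and the triangle inequality. Two of these follow almost immediately from results already in hand, so the genuine content will sit in the homogeneity.

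\textbf{Positive definiteness and triangle inequality.} Since $\|\mu\|^{a,b}=\W_1^{a,b}(\mu,0)$ and the preceding proposition shows that $\W_1^{a,b}$ is a distance, nonnegativity is automatic, and $\|\mu\|^{a,b}=0$ forces $\W_1^{a,b}(\mu,0)=0$, hence $\mu=0$. For the triangle inequality I would apply the subadditivity stated in the second item of Lemma \ref{lemma:prop_gwd} with $\mu_1=\mu$, $\mu_2=\nu$, $\nu_1=\nu_2=0$, which gives directly $\W_1^{a,b}(\mu+\nu,0)\le \W_1^{a,b}(\mu,0)+\W_1^{a,b}(\nu,0)$. (Equivalently one could combine the distance property with the translation invariance in the first item of the same lemma.)

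\textbf{Homogeneity (the main step).} Here I would first establish the scaling identity $W_1^{a,b}(\lambda\mu,\lambda\nu)=\lambda\,W_1^{a,b}(\mu,\nu)$ for positive measures $\mu,\nu$ and $\lambda>0$. Starting from the definition \eqref{e-wp} with $p=1$, I substitute $\tilde\mu=\lambda\tilde\mu'$ and $\tilde\nu=\lambda\tilde\nu'$ in the infimum; the constraint $|\tilde\mu|=|\tilde\nu|$ is equivalent to $|\tilde\mu'|=|\tilde\nu'|$, so the change of variables is a bijection of the feasible set. The total variation term scales as $|\lambda\mu-\lambda\tilde\mu'|=\lambda|\mu-\tilde\mu'|$, and the crucial observation is that the (unnormalized) Wasserstein distance scales \emph{linearly} under multiplication of the measures by a positive constant: every $\pi\in\Pi(\lambda\tilde\mu',\lambda\tilde\nu')$ is of the form $\lambda\pi'$ with $\pi'\in\Pi(\tilde\mu',\tilde\nu')$, whence $W_1(\lambda\tilde\mu',\lambda\tilde\nu')=\lambda W_1(\tilde\mu',\tilde\nu')$. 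Factoring $\lambda$ out of the infimum then yields the identity. With this in hand, for $\lambda>0$ I fix any decomposition $\mu=\mu_+-\mu_-$, note that $\lambda\mu=\lambda\mu_+-\lambda\mu_-$ is a valid decomposition of $\lambda\mu$, and use the decomposition-independence of the norm together with the scaling identity to get $\|\lambda\mu\|^{a,b}=W_1^{a,b}(\lambda\mu_+,\lambda\mu_-)=\lambda W_1^{a,b}(\mu_+,\mu_-)=\lambda\|\mu\|^{a,b}$. The case $\lambda=0$ is trivial. For $\lambda<0$ the point is that negation swaps the roles of the two parts: $\lambda\mu=|\lambda|\mu_--|\lambda|\mu_+$ is a decomposition of $\lambda\mu$, so by the positive scaling and the symmetry of the distance $W_1^{a,b}$ one obtains $\|\lambda\mu\|^{a,b}=|\lambda|\,W_1^{a,b}(\mu_-,\mu_+)=|\lambda|\,W_1^{a,b}(\mu_+,\mu_-)=|\lambda|\,\|\mu\|^{a,b}$. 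These cases cover all $\lambda\in\R$.

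The step I expect to be the main obstacle is precisely the positive-scalar scaling of $W_1^{a,b}$. One must carefully distinguish multiplication of a measure by a scalar, which scales its mass, from the spatial dilation $D_\lambda$ used in the proof of Lemma \ref{lemma:prop_gen_dist}; it is the former that is relevant here, and the fact that the unnormalized $W_1$ is homogeneous of degree one under mass scaling — rather than invariant — is exactly what makes $\|\cdot\|^{a,b}$ homogeneous of degree one as a norm.
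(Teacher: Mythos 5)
Your proof is correct, and the positive-definiteness and triangle-inequality steps coincide with the paper's (the latter also invokes the subadditivity in Lemma \ref{lemma:prop_gwd}). Where you genuinely diverge is the homogeneity step. The paper proves $W_1^{a,b}(\lambda\mu_+,\lambda\mu_-)=\lambda W_1^{a,b}(\mu_+,\mu_-)$ by appealing to the Kantorovich--Rubinstein duality of Lemma \ref{lemma:KR} and factoring $\lambda$ out of the linear functional $\varphi\mapsto\int\varphi\,d(\lambda\mu_+-\lambda\mu_-)$; you instead work directly with the primal definition \eqref{e-wp}, reparametrizing the infimum by $\tilde\mu=\lambda\tilde\mu'$, $\tilde\nu=\lambda\tilde\nu'$ and using that both the total-variation terms and the unnormalized $W_1$ are homogeneous of degree one under mass scaling (since $\pi\mapsto\lambda\pi$ is a bijection $\Pi(\tilde\mu',\tilde\nu')\to\Pi(\lambda\tilde\mu',\lambda\tilde\nu')$). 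Your route is arguably cleaner: the duality formula in Lemma \ref{lemma:KR} is stated only for $W_1^{1,1}$, so the paper's direct application of it to $W_1^{a,b}$ requires an implicit extra step (the linearity-in-the-measure of the dual functional is what saves it), whereas your primal argument works for arbitrary $a,b>0$ without any detour. You are also more complete than the paper in two respects: you treat $\lambda=0$ and $\lambda<0$ explicitly (the latter via the swap $\lambda\mu=|\lambda|\mu_--|\lambda|\mu_+$ and symmetry of the distance), while the paper only writes the case $\lambda>0$; and your closing remark correctly distinguishes mass scaling from the spatial dilation $D_\lambda$ used in the proof of Lemma \ref{lemma:prop_gen_dist} --- these are different operations and only the former is relevant here.
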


\begin{proof}
First, we notice that $\|\mu\|^{a,b}=0$ implies that $W_1^{a,b}(\mu_+, \mu_-)=0$, which is 
$\mu_+=\mu_-$ so that $\mu= \mu_+- \mu_-=0$. 
For triangular inequality,
 using the second property of Lemma \ref{lemma:prop_gwd},
  we have that for $\mu,\eta \in \M^s(\R^d)$, 
 \begin{equation*}
 \|\mu+ \eta\|^{a,b}= \W_1^{a,b} (\mu+ \eta,0) \leq \W_1^{a,b} (\mu,0)+\W_1^{a,b} (\eta,0)  =\|\mu\|^{a,b}+\| \eta\|^{a,b}.
 \end{equation*}
Homogeneity is obtained by writing for $\lambda >0,$
$\|\lambda \mu\|^{a,b}=\W_1^{a,b} (\lambda \mu,0)=W_1^{a,b} (\lambda \mu_+,\lambda \mu_-) $
where $\mu= \mu_+ -\mu_-$.
Using Lemma \ref{lemma:KR}, we have 
\begin{equation*}
\begin{aligned}
     W_1^{a,b} (\lambda \mu_+,\lambda \mu_-) &= 
     \sup \left\{\int_{\R^d}\varphi \; d(\lambda\mu_+-\lambda\mu_-); \; \varphi \in \C^{0,Lip}_{c},  \|\varphi\|_{\infty} \leq 1,\|\varphi\|_{Lip}\leq 1 \right\}
\\
&= \lambda \sup \left\{\int_{\R^d}\varphi \; d(\mu_+-\mu_-); \; \varphi \in \C^{0,Lip}_{c},  \|\varphi\|_{\infty} \leq 1,\|\varphi\|_{Lip}\leq 1 \right\} 
 =\lambda W_1^{a,b} ( \mu_+,\mu_-).
  \end{aligned}
 \end{equation*}

 \end{proof}
 
 \rev{
 We provide here an example that illustrates 
 the competition between cancellation and transportation. 
 This example is  used later in the paper. 
 \begin{example}
 \label{ex}
  Take $\mu= \delta_x-\delta_y$.
  Then
  \begin{equation*}
 \|\mu\|^{a,b}= \W_1^{a,b}(\delta_x-\delta_{y},0)=
 W_1^{a,b}(\delta_x,\delta_{y})=   \inf_{\underset{|\tilde{\mu}|=|\tilde{\nu}|}{\tilde{\mu}, \tilde{\nu} \in \mathcal{M}(\R^d)}} \left\{a(|\delta_x - \tilde{\mu}| +|\delta_y- \tilde{\nu}|)+ b W_1(\tilde{\mu},\tilde{\nu})\right\}. 
\end{equation*}
Using Lemma \ref{lemma:infimum}, the minimum is attained and it can be written as 
$ \tilde{\mu}= \epsilon \delta_x$ and $\tilde{\nu} = \epsilon \delta_y $ for some $0 \leq \epsilon \leq 1$.
Then 
  \begin{equation*}
 \|\mu\|^{a,b}= \min_{0 \leq \epsilon \leq 1 } \left\{2a(1-\epsilon)+ b \epsilon |x-y|\right\}. 
\end{equation*}
The expression above depends on the distance between the Dirac masses $\delta_x$ and $\delta_y$.
For $b |x-y |< 2 a$, then the minimum is attained for $\epsilon=1$ and 
$ \|\mu\|^{a,b}= b|x-y|$. In that case, we say that all the mass is transported.
On the contrary, for $b |x-y |\geq  2 a$, then the minimum is attained for $\epsilon=0$ 
 and 
$ \|\mu\|^{a,b}= 2a$, and we say that all the mass is cancelled (or removed).
\end{example}
}
\subsection{Topological properties}
In this section, we study the topological properties of the norm introduced above.
In particular, we aim to prove that it admits a duality formula that indeed coincides with \eqref{Hanin}.
We first prove that the topology of $\|.\|^{a,b}$ does not depend on $a,b>0.$
\begin{proposition}
For $a>0,\; b>0$, the norm $\|.\|^{a,b}$ is equivalent to $\|.\|^{1,1}$.
\end{proposition}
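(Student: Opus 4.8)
The plan is to exploit the fact that, for $p=1$, the functional minimized in the definition \eqref{e-wp} of the generalized Wasserstein distance is linear in the two coefficients $a,b$, and that the feasible set over which one minimizes does not depend on $a$ or $b$. Recall that
\[
W_1^{a,b}(\mu,\nu)=\inf_{\underset{|\tilde\mu|=|\tilde\nu|}{\tilde\mu,\tilde\nu\in\M(\R^d)}}\Big(a\,(|\mu-\tilde\mu|+|\nu-\tilde\nu|)+b\,W_1(\tilde\mu,\tilde\nu)\Big),
\]
where the constraint $|\tilde\mu|=|\tilde\nu|$ is the \emph{same} for every value of $(a,b)$.

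First I would fix an arbitrary feasible pair $(\tilde\mu,\tilde\nu)$ and abbreviate $A=|\mu-\tilde\mu|+|\nu-\tilde\nu|\ge 0$ and $B=W_1(\tilde\mu,\tilde\nu)\ge 0$. Since both quantities are nonnegative, the elementary inequality
\[
\min\{a,b\}\,(A+B)\le aA+bB\le \max\{a,b\}\,(A+B)
\]
holds for this pair. As this is valid for \emph{every} feasible $(\tilde\mu,\tilde\nu)$ and the feasible set is independent of $a,b$, passing to the infimum preserves both inequalities and, using that $\min\{a,b\}$ and $\max\{a,b\}$ are positive constants, yields
\[
\min\{a,b\}\;W_1^{1,1}(\mu,\nu)\le W_1^{a,b}(\mu,\nu)\le \max\{a,b\}\;W_1^{1,1}(\mu,\nu).
\]

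Finally, I would specialize $\mu$ to $\mu_+$ and $\nu$ to $\mu_-$ for any decomposition $\mu=\mu_+-\mu_-$. By Definition \ref{def:norm} we have $\|\mu\|^{a,b}=W_1^{a,b}(\mu_+,\mu_-)$ and $\|\mu\|^{1,1}=W_1^{1,1}(\mu_+,\mu_-)$, so the two-sided bound above gives at once
\[
\min\{a,b\}\;\|\mu\|^{1,1}\le \|\mu\|^{a,b}\le \max\{a,b\}\;\|\mu\|^{1,1}
\]
for all $\mu\in\M^s(\R^d)$, which is precisely the claimed equivalence, with explicit constants.

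I do not expect a serious obstacle: the argument is a term-by-term comparison carried out inside a single infimum. The only point requiring care—and the sole place the reasoning could fail—is that the minimization set $\{(\tilde\mu,\tilde\nu):|\tilde\mu|=|\tilde\nu|\}$ must be identical for all $(a,b)$, so that taking the infimum of the bounding inequalities is legitimate; this is indeed the case here. As an alternative, one could combine the scaling identity \eqref{scaling} with the Kantorovich--Rubinstein duality of Lemma \ref{lemma:KR}, but the direct comparison above is shorter and avoids invoking duality.
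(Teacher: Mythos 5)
Your proof is correct and follows essentially the same route as the paper: a term-by-term comparison of the cost functional under the common infimum, yielding the two-sided bound $\min\{a,b\}\,\|\mu\|^{1,1}\le\|\mu\|^{a,b}\le\max\{a,b\}\,\|\mu\|^{1,1}$. The only (cosmetic) difference is that the paper inserts the minimizer of one problem as a competitor in the other, invoking Lemma \ref{lemma:infimum} for its existence, whereas your version works directly with arbitrary feasible pairs and so does not need the infimum to be attained.
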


\begin{proof}
 For $\mu \in \M^s(\R^d)$ 
 denote by $(m^{a,b}_+,m^{a,b}_-)$ the positive measures such that
 \begin{equation*}
  \|\mu\|^{a,b} = a|\mu_+ - m_+^{a,b}| + a|\mu_- - m_-^{a,b}| +bW_1(m_+^{a,b}, m_-^{a,b}),
 \end{equation*}
 and similarly define $(m^{1,1}_+,m^{1,1}_-)$.
 \rev{Their existence is guaranteed by Lemma \ref{lemma:infimum}.} By definition of the minimizers, we have 
\begin{equation*}
\begin{aligned}
  \|\mu\|^{a,b} &= a|\mu_+ - m_+^{a,b}| + a|\mu_- - m_-^{a,b}| +bW_1(m_+^{a,b}, m_-^{a,b}) 
 \\&\leq a|\mu_+ - m_+^{1,1}| + a|\mu_- - m_-^{1,1}| +bW_1(m_+^{1,1}, m_-^{1,1}).
 \leq \max\{a,b\}  \|\mu\|^{1,1},
\end{aligned}
\end{equation*}
In the same way, we obtain 
\begin{equation*}
 \min\{a,b\}  \|\mu\|^{1,1} \leq \|\mu\|^{a,b}\leq \max\{a,b\}  \|\mu\|^{1,1}.
\end{equation*}

\end{proof}

We give now an equivalent Kantorovich-Rubinstein duality formula for the new distance.
For $f\in \C^0_b(\R^d;\R),$ similarly to $\C^0_c(\R^d;\R)$, we define the following
\begin{equation*}
\|f\|_{\infty}=\sup_{x\in\R^d}|f(x)| , \qquad\|f\|_{Lip}=\sup_{x\neq y} \dfrac{|f(x)-f(y)|}{|x-y|}.
\end{equation*}

We introduce 
\begin{equation*}
\C_{b}^{0,Lip} = \{f \in \C^0_{b}(\R^d;\R) \; |\; \|f\|_{Lip} < \infty\}.          
 \end{equation*}
In the next proposition, we express the Kantorovich duality for the norm $\W_1^{1,1}$.
This shows that  $\W_1^{1,1}$ coincides with the bounded Lipschitz distance introduced in \cite{Hanin99}, also called Fortet Mourier distance in \cite{Villani_old_new}.

\begin{proposition}[{Kantorovich duality}]
\label{prop:flat_metric}
The signed generalized Wasserstein distance $\W_1^{1,1}$ 
coincides with the bounded Lipschitz  distance:
 for $\mu,\;\nu$ in $\M^s(\R^d)$, it holds
\begin{equation*}
 \W_1^{1,1}(\mu,\nu) =  \sup \left\{\int_{\R^d}\varphi \; d(\mu-\nu); \; \varphi \in \C^{0,Lip}_{\textcolor{black}{b}},  \|\varphi\|_{\infty} \leq 1,\|\varphi\|_{Lip}\leq 1 \right\}
\end{equation*}
\end{proposition}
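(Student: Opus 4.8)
The plan is to reduce the statement to the Kantorovich--Rubinstein duality for \emph{positive} measures (Lemma \ref{lemma:KR}) and then to upgrade the class of admissible test functions from compactly supported ones to merely bounded ones. First I would unfold the definition: writing $\mu=\mu_+-\mu_-$ and $\nu=\nu_+-\nu_-$ as differences of positive measures in $\M(\R^d)$, Definition \ref{def:GWDSM} gives
\[
\W_1^{1,1}(\mu,\nu)=W_1^{1,1}(\mu_++\nu_-,\,\mu_-+\nu_+),
\]
and since $\mu_++\nu_-$ and $\mu_-+\nu_+$ are positive measures in $\M(\R^d)$, Lemma \ref{lemma:KR} applies to them. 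Observing that $(\mu_++\nu_-)-(\mu_-+\nu_+)=\mu-\nu$, this already yields
\[
\W_1^{1,1}(\mu,\nu)=\sup\left\{\int_{\R^d}\varphi\,d(\mu-\nu);\ \varphi\in\C^{0,Lip}_c,\ \|\varphi\|_\infty\leq1,\ \|\varphi\|_{Lip}\leq1\right\}.
\]
It therefore remains only to show that replacing $\C^{0,Lip}_c$ by $\C^{0,Lip}_b$ leaves the supremum unchanged.

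One inequality is immediate: since $\C^{0,Lip}_c\subset\C^{0,Lip}_b$, the supremum over the larger class dominates the one over the smaller. For the reverse inequality I would fix an admissible $\varphi\in\C^{0,Lip}_b$ and construct a sequence of admissible competitors lying in $\C^{0,Lip}_c$ whose integrals converge to $\int\varphi\,d(\mu-\nu)$.

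To this end I would introduce the Lipschitz cutoff $\chi_R(x)=\max\{0,\min\{1,2-|x|/R\}\}$, which equals $1$ on $B(0,R)$, vanishes outside $B(0,2R)$, takes values in $[0,1]$, and satisfies $\|\chi_R\|_{Lip}\leq1/R$. The product $\varphi\chi_R$ is then compactly supported with $\|\varphi\chi_R\|_\infty\leq1$ and, by the Leibniz-type bound $\|\varphi\chi_R\|_{Lip}\leq\|\varphi\|_\infty\|\chi_R\|_{Lip}+\|\chi_R\|_\infty\|\varphi\|_{Lip}\leq1+1/R$, only mildly violates the Lipschitz constraint. Rescaling to $\varphi_R:=(1+1/R)^{-1}\varphi\chi_R$ restores both $\|\varphi_R\|_\infty\leq1$ and $\|\varphi_R\|_{Lip}\leq1$, so each $\varphi_R$ is admissible in $\C^{0,Lip}_c$.

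To conclude I would let $R\to\infty$: since $\chi_R\to1$ pointwise and $(1+1/R)^{-1}\to1$, one has $\varphi_R\to\varphi$ pointwise with $|\varphi_R|\leq1$ throughout. Because $\mu$ and $\nu$ have finite mass, $|\mu|+|\nu|$ is a finite measure against which the constant $1$ is integrable, so dominated convergence gives $\int\varphi_R\,d(\mu-\nu)\to\int\varphi\,d(\mu-\nu)$. Hence $\int\varphi\,d(\mu-\nu)$ is bounded by the supremum over $\C^{0,Lip}_c$ for every admissible $\varphi\in\C^{0,Lip}_b$, which supplies the missing inequality. The main obstacle is precisely this last passage from compactly supported to bounded test functions: the finiteness of the total variation $|\mu|+|\nu|$ is exactly what makes the truncation harmless via dominated convergence, and is the reason the statement is confined to measures of finite mass.
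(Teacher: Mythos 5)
Your proof is correct and follows essentially the same route as the paper: reduce to Lemma \ref{lemma:KR} via the observation $(\mu_++\nu_-)-(\mu_-+\nu_+)=\mu-\nu$, then pass from $\C^{0,Lip}_c$ to $\C^{0,Lip}_b$ by truncating with Lipschitz cutoffs and exploiting the finiteness of $|\mu|+|\nu|$ (the paper phrases this as tightness, you as dominated convergence). If anything, your version is slightly more careful, since the rescaling by $(1+1/R)^{-1}$ guarantees that the truncated competitors genuinely satisfy $\|\varphi_R\|_{Lip}\leq 1$, a point the paper's product $\varphi_n\rho_n$ glosses over.
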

\rev{We emphasize that Proposition \ref{prop:flat_metric} does not coincide with Lemma \ref{lemma:KR}, 
since it involves non-compactly supported test functions.}

\begin{proof}
By using Lemma \ref{lemma:KR}
we have
 \begin{equation*}
 \begin{aligned}
   \W_1^{1,1}(\mu,\nu)&=
   W_1^{a,b}(\mu_+ + \nu_-,\nu_++\mu_-)
 \\  &= \sup \left\{\int_{\R^d}\varphi \; d(\mu_+-\mu_--(\nu_+-\nu_-)); \; \varphi \in \C^{0,Lip}_c,  \|\varphi\|_{\infty} \leq 1,\|\varphi\|_{Lip}\leq 1 \right\}
 \\&= \sup \left\{\int_{\R^d}\varphi \; d(\mu-\nu); \; \varphi \in \C^{0,Lip}_c,  \|\varphi\|_{\infty} \leq 1,\|\varphi\|_{Lip}\leq 1 \right\}.
 \end{aligned}
 \end{equation*}

We denote by 
\begin{equation*}
 S= \sup \left\{\int_{\R^d}\varphi \; d(\mu-\nu); \; \varphi \in \C^{0,Lip}_b,  \|\varphi\|_{\infty} \leq 1,\|\varphi\|_{Lip}\leq 1 \right\}.
\end{equation*}
First observe that $S<+\infty$. Indeed, it holds $\int_{\R^d}\varphi \; d(\mu-\nu)\leq\|\varphi\|_\infty(|\mu|+|\nu|)<+\infty$. Denote with  $\varphi_n$ a sequence of functions of $\C^{0,Lip}_b$ such that $\int_{\R^d}\varphi_n \; d(\mu-\nu) \to S$ as $n\to \infty$.
\MT{Consider a sequence of functions $\rho_n$ in $\C^{0,Lip}_c$ such that $\rho_n(x)=1$ for $x\in B_0(n)$, $\rho_n(x)=0$ for $x \notin B_0(n+1)$ and $\|\rho_n\|_{\infty} \leq 1$.}
For the sequence $\psi_n = \varphi_n \MT{\rho_n}$ of functions
of $\C^{0,Lip}_c$, it holds
\begin{equation*}
\begin{aligned}
 \left|\int_{\R^d}\psi_n \; d(\mu-\nu) -S \right|
 &\leq
  \left|\int_{\R^d}\left(\psi_n-\varphi_n\right) \; d(\mu-\nu)  \right|+  \left|\int_{\R^d}\varphi_n \; d(\mu-\nu) -S \right| \\
   &\leq
 2 \left|\int_{\R^d\MT{\setminus B_0(n)}} \; d(\mu-\nu)  \right|+  \left|\int_{\R^d}\varphi_n \; d(\mu-\nu) -S \right| \\
\end{aligned}
\end{equation*}
since $\|\varphi_n\|_{\infty} \leq1$.
The first term goes to zero with $n$, since $(\mu-\nu)$ being of finite mass is tight, and
the second term goes to zero with $n$ by definition of $S$ and $\varphi_n$.
Then
\begin{equation*}
 S=\sup \left\{\int_{\R^d}\varphi \; d(\mu-\nu); \; \varphi \in \C^{0,Lip}_c,  \|\varphi\|_{\infty} \leq 1,\|\varphi\|_{Lip}\leq 1 \right\},
\end{equation*}
and Proposition \ref{prop:flat_metric} is proved.
\end{proof}

\begin{remark}
We observe that a sequence $\mu_n$ of $\M^s(\R)$ which satisfies $\|\mu_n\|^{a,b}  \underset{n \to \infty}{\rightarrow}0$
 is not necessarily tight, and its mass is not necessarily bounded. 
 For instance, we have that
  \begin{equation*}
\nu_n =  \delta_{n} -  \delta_{n+\frac 1n}  
 \end{equation*}
is not tight, whereas it satisfies for $n$ sufficiently large 
\begin{equation*}
 \|\nu_n\|^{a,b}=\frac{b}{n}\underset{n \to \infty}{\rightarrow}0.
\end{equation*}
\rev{
See Example \ref{ex} for the details of the calculation.}
Now take the sequence
 \begin{equation*}
\mu_n = n\; \delta_{\frac{1}{n^2}} - n\; \delta_{-\frac{1}{n^2}}.  
 \end{equation*}
\rev{As explained in Example \ref{ex}, 
depending on the sign of $2a-\frac{2b}{n^2}$, 
we either  cancel the mass or  transport it. For $n$ large enough, 
$2a \geq  \frac{2b}{n^2}$, so we transport the mass.
Thus  for $n$ sufficiently large
\begin{equation*}
\|\mu_n\|^{a,b}= \dfrac{2bn}{n^2}
\underset{n \to \infty}{\rightarrow}0
\end{equation*}
 whereas $|\mu_n|= 2n$ is not bounded. }
\end{remark}

\begin{remark}
\label{weak_convergence}
Norm $\|.\|^{1,1}$ does not metrize \rev{narrow} convergence, contrarily to what is stated in \cite{Hanin99}.
Indeed, take $\mu_n= \delta_{\sqrt{2\pi n+ \frac{\pi}{2}}} -\delta_{\sqrt{2\pi n+ \frac{3\pi}{2}}}$.
We have 
\begin{equation*}
 \|\mu_n\|^{1,1}\leq \left|\sqrt{2\pi n+ \frac{\pi}{2}}-\sqrt{2\pi n+ \frac{3\pi}{2}}\right| \underset{n\to\infty}{\to} 0, 
\end{equation*}
even though for $\varphi(x)=\sin(x^2)$ in $\C_b^{0}(\R)$, we have
\begin{equation*}
 \int_{\R} \varphi d\mu_n  =2, \quad n\in \N.
\end{equation*}

\end{remark}
\begin{remark}
\label{rem:flat_metric}
We  have as a direct consequence of Proposition \ref{prop:flat_metric} that
 \begin{equation}
 \label{weak_conv}
 \|\mu_n-\mu\|^{a,b} \underset{n \to \infty}{\rightarrow}0 \quad \quad 
 \Rightarrow  \quad \forall \varphi \in \C_{b}^{0,Lip}(\R^d), \; \int_{\R^d} \varphi d\mu_n  \underset{n \to \infty}{\rightarrow}\int_{\R^d} \varphi d\mu. 
 \end{equation}
However, the reciprocal statement of \eqref{weak_conv} is false: 
define 
\begin{equation*}
\mu_n:=n\cos(nx)\chi_{[0,\pi]} .
\end{equation*} 
For $$\varphi_n:=\frac{1}{n}\cos(nx),$$

it is clear that \begin{equation*}
                  \int_{\R} \varphi_n\,d\mu_n=\int_0^{\pi} \cos^2(nx)\, dx=\dfrac{\pi}{2}\not\to 0.
                 \end{equation*}
\rev{In particular,
\begin{equation*}
 \sup_{\varphi \in \C^{0,Lip}_b(\R)} \int_{\R} \varphi\,d(\mu_n-0)\geq \frac{\pi}{2}, 
\end{equation*}
hence by Proposition \ref{prop:flat_metric}, $\|\mu_n-0\|\geq \frac{\pi}{2}$ does not converge to zero.}
We now prove that, for each $\varphi$ in $\C_{b}^{0, Lip}(\R)$, 
it holds $\int_{\R} \varphi\,d\mu_n\to 0$. Given $\varphi \in \C_{b}^{0,Lip}(\R)$, define
$$f(x):=\begin{cases}
\varphi(-x),&\mbox{~~when~~}x\in[-\pi,0],\\
\varphi(x),&\mbox{~~when~~}x\in[0,\pi],\\
\end{cases}$$
and we extend $f$ as a $2\pi$-periodic function on $\R$.
We have
\begin{equation*}
 \dst\int_{\R} \varphi\,d\mu_n=\int_{\R} f \,d\mu_n.
\end{equation*}
 Since $f$ is a $2\pi$-periodic function, it also holds $\int f \,d\mu_n=na_n$, where $a_n$ is the $n$-th cosine coefficient in the Fourier series expansion of $f$.
We then prove $na_n\to 0$ for any $2\pi$-periodic Lipschitz function $f$,
following the ideas of \cite[p. 46, last line]{Zygmund_book}.
Since $f$ is Lipschitz,
then its distributional derivative is in $L^{\infty}[-\pi,\pi]$ and thus in $L^1[-\pi,\pi]$.
Then 
$$a_n=\frac{1}{2\pi}\int_{-\pi}^{\pi} f(x)\cos(nx)\,dx=-\frac{1}{2n\pi}\int_{-\pi}^{\pi} f'(x)\sin(nx)\,dx=-\frac{b'_n}{n},$$
where $b'_n$ is the $n$-th sine coefficient of $f'$.
As a consequence of the Riemann-Lebesgue lemma, $b'_n\to 0$, and this implies $n a_n\to 0$.

\end{remark}

 \begin{proposition}
  Assume that $\|\mu_n\|^{a,b}\underset{n \to \infty}{\to}0$, then  
 $\Delta m_n:= |\mu_n^+|-|\mu_n^-|\underset{n \to \infty}{\to}0$.
 \end{proposition}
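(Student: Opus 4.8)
The plan is to exploit the optimal decomposition furnished by Lemma \ref{lemma:infimum} and to observe that the transport term $bW_1$ only helps. Writing $\mu_n=\mu_n^+-\mu_n^-$ for the Jordan decomposition, we have $\|\mu_n\|^{a,b}=W_1^{a,b}(\mu_n^+,\mu_n^-)$, and Lemma \ref{lemma:infimum} provides a minimizing pair $(m_+,m_-)$ (depending on $n$, which I suppress) with $m_+\leq\mu_n^+$, $m_-\leq\mu_n^-$ and $|m_+|=|m_-|$, so that
\begin{equation*}
\|\mu_n\|^{a,b}=a\left(|\mu_n^+-m_+|+|\mu_n^--m_-|\right)+bW_1(m_+,m_-).
\end{equation*}

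Next I would rewrite the quantity $\Delta m_n$ in terms of the first term above. Since $m_+\leq\mu_n^+$, the measure $\mu_n^+-m_+$ is positive and therefore $|\mu_n^+-m_+|=|\mu_n^+|-|m_+|$; likewise $|\mu_n^--m_-|=|\mu_n^-|-|m_-|$. Using the mass-balance constraint $|m_+|=|m_-|$, these two identities combine to give
\begin{equation*}
|\mu_n^+-m_+|-|\mu_n^--m_-|=\left(|\mu_n^+|-|m_+|\right)-\left(|\mu_n^-|-|m_-|\right)=|\mu_n^+|-|\mu_n^-|=\Delta m_n.
\end{equation*}

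Finally, dropping the non-negative transport term $bW_1(m_+,m_-)$ and using the triangle inequality for real numbers,
\begin{equation*}
|\Delta m_n|\leq|\mu_n^+-m_+|+|\mu_n^--m_-|\leq\frac{1}{a}\,\|\mu_n\|^{a,b}\underset{n \to \infty}{\rightarrow}0,
\end{equation*}
which is the claim. I do not expect a serious obstacle: the only point requiring care is the mass-subtraction identity $|\mu_n^\pm-m_\pm|=|\mu_n^\pm|-|m_\pm|$, which rests precisely on the domination $m_\pm\leq\mu_n^\pm$ guaranteed by Lemma \ref{lemma:infimum}. For an arbitrary near-optimal pair not satisfying $m_\pm\leq\mu_n^\pm$ the clean cancellation producing $\Delta m_n$ would break down, so selecting the dominated minimizer is the crux of the argument.
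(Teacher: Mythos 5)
Your proof is correct and follows essentially the same route as the paper: both invoke Lemma \ref{lemma:infimum} to select a minimizing pair dominated by $\mu_n^\pm$, use the mass balance $|m_+|=|m_-|$ to reduce $\Delta m_n$ to the difference of the removed masses, and conclude from the vanishing of the removal term. Your version adds the explicit quantitative bound $|\Delta m_n|\leq \frac{1}{a}\|\mu_n\|^{a,b}$, but the underlying argument is identical.
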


 \begin{proof}
  We have by definition $\|\mu_n\|^{a,b}=W_1^{a,b}(\mu_n^+,\mu_n^-)$.
We denote by $\bar{\mu}_n^+, \bar{\mu}_n^-$ the minimizers in the right hand side of \eqref{e-wp} realizing the distance $W_1^{a,b}(\mu_n^+,\mu_n^-)$.
We have 
\begin{equation*}
 \|\mu_n\|^{a,b}=a \left(|\mu_n^+ -\bar{\mu}_n^+|+ |\mu_n^- -\bar{\mu}_n^-| \right) + bW_1(\bar{\mu}_n^+,\bar{\mu}_n^-), \qquad |\bar{\mu}_n^+|=|\bar{\mu}_n^-|.
\end{equation*}

Since $\|\mu_n\|^{a,b}\underset{n \to \infty}{\to}0$, each of the three terms converges to zero as well. 
\rev{Since by Lemma \ref{lemma:infimum} we can assume $\bar{\mu}_n^+\leq {\mu}_n^+$ and $\bar{\mu}_n^-\leq {\mu}_n^-$, we have}
\begin{equation*}
\begin{aligned}
 \left| |\mu_n^+|- |\mu_n^-|\right|&= 
 \left| |\mu_n^+-\bar{\mu}_n^+ +\bar{\mu}_n^+ |- |\mu_n^--\bar{\mu}_n^- +\bar{\mu}_n^-|\right|\\
 & =\left| |\mu_n^+-\bar{\mu}_n^+| +|\bar{\mu}_n^+ |- |\mu_n^--\bar{\mu}_n^-|-|\bar{\mu}_n^-|\right|
\\& = \left| |\mu_n^+-\bar{\mu}_n^+| -|\mu_n^--\bar{\mu}_n^-| \right|
 \underset{n \to \infty}{\to}0.
 \end{aligned}
\end{equation*}

\end{proof}

%
 
We remind from \cite{PR16} that the space $(\M(\R^d),W_p^{a,b})$ is a complete metric space. 
  The proof is based on the fact that 
a Cauchy sequence of positive measures is both uniformly bounded in mass and tight. 
This is not true anymore for a Cauchy sequence of signed measures.

 \begin{remark}
  \label{prop:notaBanach} Observe that $(\M^s(\R^d),\|.\|^{a,b})$ is not a Banach space. Indeed, take the sequence 
  \begin{equation*}
   \mu_n= \sum_{i=1}^n \left( \delta_{i+\frac{1}{2^i}} - \delta_{i-\frac{1}{2^i}}\right).
  \end{equation*}
It is a Cauchy sequence in $(\M^s(\R^d),\|.\|^{a,b})$: indeed, 
\rev{by choosing to transport all the mass from $\mu_n^++\mu_{n+k}^-$ onto $\mu_{n+k}^++\mu_n^-$ with the cost $b$, it holds}
\begin{equation*}
 \W_1^{a,b}(\mu_n, \mu_{n+k}) \leq 2b \sum_{i=n+1}^{n+k} \dfrac{1}{2^i}
 \leq 2b \sum_{i=n+1}^{+\infty} \dfrac{1}{2^i} \underset{n\to \infty}{\to}0.
\end{equation*}

However, the  sequence $(\mu_n)_n$ does not converge in  $(\M^s(\R^d),\|.\|^{a,b})$.
As seen in Remark \ref{rem:flat_metric}, the convergence for the norm $\|.\|^{a,b}$ implies the 
convergence in the sense of distributions. 
In the sense of distributions we have 
\begin{equation*}
\mu_n \rightharpoonup \mu^* :=   \sum_{i=1}^{+ \infty} \left( \delta_{i+\frac{1}{2^i}} - \delta_{i-\frac{1}{2^i}}\right) \notin \M^s(\R).                                    
 \end{equation*}
Indeed, for all $\varphi\in\mathcal{C}^{\infty}_c(\R)$, since $\varphi$ is compactly supported, it holds
\begin{equation*}
 \langle \mu_n - \mu , \varphi\rangle = \sum_{i=n+1}^{+ \infty} \left(\varphi\left(i + \frac{1}{2^i} \right)- \varphi\left(i - \frac{1}{2^i}\right)  \right) 
  \underset{n\to \infty}{\to}0.
\end{equation*}
\rev{The measure $\mu^*$ does not belong go $\M^s(\R)$, as it has infinite mass.}
\end{remark}

Nevertheless, we have the following convergence result. 

 \begin{theorem}
 \label{thm:Banach}
 \rev{Let $\mu_n$ be a Cauchy sequence in  $(\M^s(\R^d),\|.\|^{a,b})$. If $\mu_n$
  is tight and has uniformly bounded  mass, then it converges
 in  $(\M^s(\R^d),\|.\|^{a,b})$.}
 \end{theorem}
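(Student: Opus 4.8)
The strategy is to extract a candidate limit by compactness and then upgrade convergence in the sense of measures to convergence for $\|\cdot\|^{a,b}$ via Kantorovich--Rubinstein duality. First I would write the Jordan decompositions $\mu_n = \mu_n^+ - \mu_n^-$. By hypothesis the sequences $(\mu_n^+)_n$ and $(\mu_n^-)_n$ are tight and uniformly bounded in mass, so Lemma~\ref{lemma:weak_compactness} lets me extract a subsequence (not relabelled) along which $\mu_n^+$ and $\mu_n^-$ converge vaguely to some $\mu^+,\mu^-\in\M(\R^d)$; by tightness, as noted right after Lemma~\ref{lemma:weak_compactness}, these vague limits are in fact narrow. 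I then set $\mu := \mu^+ - \mu^- \in \M^s(\R^d)$, which has finite mass since both limits do.

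The key reduction uses Definition~\ref{def:GWDSM} together with the subadditivity of $W_1^{a,b}$ (first item of Lemma~\ref{lemma:prop_gen_dist}): splitting the first marginal as $\mu_n^+$ and $\mu^-$ and the second as $\mu^+$ and $\mu_n^-$ gives
\begin{equation*}
\|\mu_n - \mu\|^{a,b} = W_1^{a,b}(\mu_n^+ + \mu^-,\, \mu^+ + \mu_n^-) \le W_1^{a,b}(\mu_n^+, \mu^+) + W_1^{a,b}(\mu^-, \mu_n^-).
\end{equation*}
Hence it suffices to prove, for positive measures, that narrow convergence with uniformly bounded mass and tightness forces $W_1^{a,b}$-convergence: if $\nu_n \to \nu$ narrowly with $\sup_n |\nu_n| < \infty$ and $(\nu_n)_n$ tight, then $W_1^{a,b}(\nu_n,\nu)\to 0$. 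Applying this to $\nu_n=\mu_n^\pm$, $\nu=\mu^\pm$ makes both terms above vanish.

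The crux is this last claim, and it is where I expect the main difficulty. By the equivalence of the norms $\|\cdot\|^{a,b}$ established earlier I may take $a=b=1$, and by Lemma~\ref{lemma:KR} one has $W_1^{1,1}(\nu_n,\nu) = \sup\{\int_{\R^d} \varphi\, d(\nu_n-\nu)\}$ over $\varphi\in\C^{0,Lip}_c$ with $\|\varphi\|_\infty\le 1$ and $\|\varphi\|_{Lip}\le 1$. The subtlety is that narrow convergence only gives $\int \varphi\, d(\nu_n-\nu)\to 0$ for each \emph{fixed} $\varphi$, whereas the supremum ranges over the entire unit ball of Lipschitz functions; the convergence must be made uniform over this ball. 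I would fix $\eps>0$, use tightness to choose a compact $K$ carrying all but $\eps$ of the mass of every $\nu_n$ and of $\nu$, and note that the admissible $\varphi$ restricted to $K$ form a uniformly bounded, $1$-Lipschitz, hence equicontinuous family, which by Arzelà--Ascoli is totally bounded in $\C^0(K)$. Covering it by finitely many $\C^0(K)$-balls, invoking narrow convergence at the finitely many centers, and controlling the tails off $K$ by the uniform mass bound then yields $\sup_\varphi \int \varphi\, d(\nu_n-\nu)\to 0$. This is precisely the step where both tightness and bounded mass are indispensable; their absence is exactly what produces the counterexamples to reverse implications discussed in Remarks~\ref{weak_convergence} and~\ref{rem:flat_metric}.

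Finally I would combine the pieces: along the extracted subsequence $\|\mu_n - \mu\|^{a,b}\to 0$, and since $(\mu_n)_n$ is Cauchy in $(\M^s(\R^d),\|\cdot\|^{a,b})$, the standard argument that a Cauchy sequence with a convergent subsequence converges (to the same limit) shows that the whole sequence converges to $\mu$ in $(\M^s(\R^d),\|\cdot\|^{a,b})$.
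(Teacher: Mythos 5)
Your proof is correct and follows essentially the same route as the paper: extract a vaguely (hence, by tightness, narrowly) convergent subsequence of the Jordan parts via Lemma~\ref{lemma:weak_compactness}, reduce $\|\mu_n-\mu\|^{a,b}$ to $W_1^{a,b}(\mu_n^+,\mu^+)+W_1^{a,b}(\mu_n^-,\mu^-)$ by subadditivity, and conclude with the Cauchy-plus-convergent-subsequence argument. The only difference is that the paper simply cites \cite[Theorem 13]{PR14} for the step ``narrow convergence with tightness and bounded mass implies $W_1^{a,b}$-convergence,'' whereas you supply a self-contained (and correct) proof of that ingredient via Kantorovich--Rubinstein duality and an Arzel\`a--Ascoli equicontinuity argument.
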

 \begin{proof}
 Take a tight Cauchy sequence $(\mu_n)_n \in \M^s(\R^d)$ such that the sequences given by the Jordan decomposition 
 $|\mu_n^+|$ 
 and $|\mu_n^-|$ are 
  uniformly bounded.
 Then, by Lemma \ref{lemma:weak_compactness}, there exists $\mu^+$
 and $\mu^-$ 
 in $\M(\R^d)$ and 
 $\varphi_1$ non decreasing
 such that, $\mu_{\varphi_1(n)}^+ \underset{n\to\infty}{\rightharpoonup} \mu^+$ vaguely.
\rev{Then, $|\mu_n^-|$  being uniformly bounded, there exists  $\varphi_2$ non decreasing such that 
for $\varphi= \varphi_1 \circ \varphi_2$ it holds
$$\mu_{\varphi(n)}^- \underset{n\to\infty}{\rightharpoonup} \mu^-\mbox{~~ vaguely}.$$
}
Since $\mu_n^+$ and $\mu_n^-$ are assumed to be tight, the sequences
$\mu_{\varphi(n)}^- $ and $\mu_{\varphi(n)}^+ $ also converge to $\mu^-$ and $\mu^+$ narrowly,
and it holds
$W_1^{a,b}(\mu_{\varphi(n)}^+,\mu^+) \underset{n\to\infty}{\to} 0$ and 
$W_1^{a,b}(\mu_{\varphi(n)}^-,\mu^-) \underset{n\to\infty}{\to} 0$ (see  \cite[Theorem 13]{PR14}).
Then, we have 
\begin{equation*}
\begin{aligned}
 \|\mu_n- (\mu^+-\mu^-) \|^{a,b} \leq
&  \|\mu_n- \mu_{\varphi(n)} \|^{a,b} + 
\|\mu_{\varphi(n)}- (\mu^+-\mu^-) \|^{a,b} \\
& \leq \|\mu_n - \mu_{\varphi(n)} \|^{a,b} + 
W_1^{a,b}(\mu_{\varphi(n)}^++ \mu^- , \mu_{\varphi(n)}^-+ \mu^+) 
\\  & \leq  \|\mu_n- \mu_{\varphi(n)}\|^{a,b} +   W_1^{a,b}(\mu_{\varphi(n)}^+ , \mu^+)+
  W_1^{a,b}(\mu_{\varphi(n)}^- , \mu^-)
 \underset{n\to\infty}{\to} 0
\end{aligned}
\end{equation*}
since $(\mu_n)_n$ is a Cauchy sequence.
\end{proof}

\rev{We end this section with a characterization of the convergence for the norm. If a sequence $\mu_n$ of signed measures 
converges toward $\mu\in\M^s(\R^d)$, then for 
any decomposition of $\mu_n$ into two positive measures $\mu_n=\mu_n^+-\mu_n^-$ (not necessarily the Jordan decomposition), we have that each $\mu_n^+,\mu_n^-$ is the sum of two positive measures: 
$m_n^+,z_n^+$ and $m_n^-,z_n^-$, respectively. The measures $m_n^+$ and $m_n^-$ are the parts that converge respectively 
to $\mu^+$ and $\mu^-$. Both $m_n^+$ and $m_n^-$ are uniformly bounded and tight.
The measures $z_n^+$ and $z_n^-$ are  the residual terms that may be unbounded and not tight.
They compensate each other in the sense that $W_1^{a,b}(z_n^+,z_n^-)$ 
vanishes for large $n$.}
 \begin{theorem}\label{thm:decomposition}
 The two following statements are equivalent:

  \begin{equation*}
 (i) \quad \|\mu_n - \mu\|^{a,b} \underset{n \to \infty}{\to} 0.
 \end{equation*}

(ii) There exists \rev{four positive measures} $z_n^+, \; z_n^-, \; m_n^+, \; m_n^- \in \M(\R^d)$ such that
\begin{equation*}
 \begin{aligned}
  \mu_n^+ &= z_n^+ +m_n^+ ,\\
  \mu_n^-&=z_n^- +m_n^-,
 \end{aligned}
 \qquad \text{with }\qquad 
 \begin{aligned}
&  W_1^{a,b}(z_n^+,z_n^-) \underset{n \to \infty}{\to} 0,\\
 &   W_1^{a,b}(m_n^+,\mu^+) \underset{n \to \infty}{\to} 0,\\
  &    W_1^{a,b}(m_n^-,\mu^-) \underset{n \to \infty}{\to} 0,\\
 &\{m_n^+\}_n \text{ and }\{m_n^-\}_n  \text{ are tight and bounded in mass, }
 \end{aligned}
  \end{equation*}
  where $\mu= \mu^+ - \mu^-$ is the Jordan decomposition, and 
  $\mu_n= \mu_n^+ - \mu_n^-$ is any decomposition.
 \end{theorem}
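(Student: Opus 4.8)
The plan is to establish the two implications separately; $(ii)\Rightarrow(i)$ is routine, while $(i)\Rightarrow(ii)$ carries the real content and requires building the decomposition from an almost-optimal coupling.

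\medskip

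\noindent\emph{Direction $(ii)\Rightarrow(i)$.} Here I would simply unfold the definitions. Writing $\mu=\mu^+-\mu^-$ and $\mu_n=\mu_n^+-\mu_n^-=(m_n^++z_n^+)-(m_n^-+z_n^-)$, Definition \ref{def:GWDSM} gives $\|\mu_n-\mu\|^{a,b}=W_1^{a,b}(m_n^++z_n^++\mu^-,\,m_n^-+z_n^-+\mu^+)$. Applying the subadditivity of $W_1^{a,b}$ (first item of Lemma \ref{lemma:prop_gen_dist}) and pairing $m_n^+$ with $\mu^+$, $\mu^-$ with $m_n^-$, and $z_n^+$ with $z_n^-$, I bound this by $W_1^{a,b}(m_n^+,\mu^+)+W_1^{a,b}(\mu^-,m_n^-)+W_1^{a,b}(z_n^+,z_n^-)\to0$. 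Note that tightness and boundedness of mass play no role in this direction.

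\medskip

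\noindent\emph{Direction $(i)\Rightarrow(ii)$: setup.} Assume $(i)$ and set $\alpha_n=\mu_n^++\mu^-$, $\beta_n=\mu_n^-+\mu^+$, so $W_1^{a,b}(\alpha_n,\beta_n)\to0$. By Lemma \ref{lemma:infimum} choose minimizers $\tilde\alpha_n\le\alpha_n$, $\tilde\beta_n\le\beta_n$ of equal mass with an optimal plan $\pi_n$ realizing $W_1(\tilde\alpha_n,\tilde\beta_n)$; then $a|\alpha_n-\tilde\alpha_n|$, $a|\beta_n-\tilde\beta_n|$ and $bW_1(\tilde\alpha_n,\tilde\beta_n)$ all tend to $0$. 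Using Lemma \ref{lemma:prop_min_measures} I split $\tilde\alpha_n=p_n+q_n$ with $p_n=\mu_n^+\wedge\tilde\alpha_n\le\mu_n^+$ and $q_n=\tilde\alpha_n-p_n\le\mu^-$, and symmetrically $\tilde\beta_n=r_n+s_n$ with $r_n\le\mu_n^-$, $s_n\le\mu^+$. Since the removed masses vanish, the residuals $a_n=\mu_n^+-p_n$ and $c_n=\mu_n^--r_n$ satisfy $|a_n|,|c_n|\to0$, while $q_n\le\mu^-$ and $s_n\le\mu^+$ increase to $\mu^-$, $\mu^+$ in total variation.

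\medskip

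\noindent\emph{Splitting the plan and the main obstacle.} Using the construction of Definition \ref{def:image} on both marginals, I split $\pi_n$ through the densities $f=d p_n/d\tilde\alpha_n$ and $g=d s_n/d\tilde\beta_n$ into four blocks $\pi^{ps},\pi^{pr},\pi^{qs},\pi^{qr}$ (with weights $fg$, $f(1-g)$, $(1-f)g$, $(1-f)(1-g)$). I then define $m_n^+$ as the source marginal of $\pi^{ps}$ (a part of $\mu_n^+$) and $m_n^-$ as the target marginal of $\pi^{qr}$ (a part of $\mu_n^-$), with $z_n^\pm=\mu_n^\pm-m_n^\pm$. The crux, and the hardest step, is to show that the cross block $\pi^{qs}$ — mass leaving $q_n\le\mu^-$ toward $s_n\le\mu^+$ — has vanishing mass. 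I would exploit $\mu^+\perp\mu^-$: given $\eps>0$, pick disjoint compacts $K^+,K^-$ with $\mu^+(\R^d\setminus K^+)<\eps$, $\mu^-(\R^d\setminus K^-)<\eps$, so $\rho:=\mathrm{dist}(K^+,K^-)>0$. Since the source of $\pi^{qs}$ sits in $\mu^-$ and its target in $\mu^+$, at least $|\pi^{qs}|-2\eps$ of its mass travels from $K^-$ to $K^+$, a distance $\ge\rho$; hence $\rho(|\pi^{qs}|-2\eps)\le W_1(\tilde\alpha_n,\tilde\beta_n)\to0$, giving $\limsup_n|\pi^{qs}|\le2\eps$ and so $|\pi^{qs}|\to0$.

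\medskip

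\noindent\emph{Conclusion.} With the cross block negligible, $m_n^+$ is coupled by $\pi^{ps}$ at cost $\to0$ to the target measure $s_n^p=s_n-(\text{target of }\pi^{qs})\le\mu^+$, whose mass tends to $|\mu^+|$; thus $W_1^{a,b}(m_n^+,\mu^+)\le bW_1(m_n^+,s_n^p)+a|\mu^+-s_n^p|\to0$, and symmetrically $W_1^{a,b}(m_n^-,\mu^-)\to0$. For the residuals, $z_n^+=a_n+(\text{source of }\pi^{pr})$ and $z_n^-=c_n+(\text{target of }\pi^{pr})$, where the two $\pi^{pr}$-marginals are coupled at vanishing cost and $|a_n|,|c_n|\to0$; subadditivity then yields $W_1^{a,b}(z_n^+,z_n^-)\to0$. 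Finally, $|m_n^\pm|\le|\mu^\pm|$ gives boundedness, and tightness follows since each $m_n^\pm$ is a vanishing-$W_1$ perturbation of a measure dominated by the tight limit $\mu^\pm$.
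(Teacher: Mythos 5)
Your proof is correct, and its overall architecture coincides with the paper's: both directions are handled the same way, the easy implication by subadditivity, and the hard one by taking the minimizers $\tilde\alpha_n\le\mu_n^++\mu^-$, $\tilde\beta_n\le\mu_n^-+\mu^+$ from Lemma \ref{lemma:infimum}, splitting them via $\wedge$ and Lemma \ref{lemma:prop_min_measures} into the parts sitting under $\mu_n^{\pm}$ and under $\mu^{\mp}$, decomposing the optimal plan into the same four blocks (your $\pi^{pr},\pi^{ps},\pi^{qr},\pi^{qs}$ correspond to the paper's $\nu_n\to\nu_n^*$, $\xi_n\to\xi_n^*$, $w_n\to w_n^*$, $\alpha_n\to\alpha_n^*$), and assigning $z_n^\pm$ and $m_n^\pm$ exactly as the paper does. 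The one genuinely different ingredient is how you kill the cross block $\pi^{qs}$, the mass travelling from (under) $\mu^-$ to (under) $\mu^+$, which is indeed the crux. The paper argues by weak compactness: it extracts vaguely convergent subsequences of $\alpha_n\le\mu^-$ and $\alpha_n^*\le\mu^+$, notes that $W_1(\alpha_n,\alpha_n^*)\to0$ forces a common limit $\alpha$ with $\alpha\le\mu^+$ and $\alpha\le\mu^-$, concludes $\alpha=0$ from orthogonality, and finishes with the ``every subsequence has a further subsequence converging to $0$'' principle, invoking \cite[Theorem 13]{PR12} twice. You instead give a direct quantitative argument: by inner regularity choose disjoint compacts $K^{\pm}$ carrying all but $\eps$ of $\mu^{\pm}$, use that disjoint compacts are at positive distance $\rho$, and bound $\rho(|\pi^{qs}|-2\eps)$ by the vanishing transport cost. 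This is cleaner and more elementary (no subsequence extraction, no appeal to Lemma \ref{lemma:weak_compactness} or to the metrization result of \cite{PR12}), at the price of making explicit the inner regularity of Radon measures; the paper's route reuses machinery it needs elsewhere anyway. Your tensor-product splitting of $\pi_n$ by the two densities $f,g$ is also a tidier formalization of the paper's sequential construction of $\nu_n,\xi_n,w_n,\alpha_n$ via Definition \ref{def:image}. The remaining verifications (masses $|q_n|\to|\mu^-|$, $|s_n|\to|\mu^+|$, the estimates for $W_1^{a,b}(z_n^+,z_n^-)$ and $W_1^{a,b}(m_n^\pm,\mu^\pm)$, boundedness and tightness of $m_n^\pm$) match the paper's, with your tightness argument being a hands-on version of the paper's citation of \cite[Theorem 13]{PR12}.
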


 \begin{proof} We start by proving $(i) \Rightarrow (ii)$.
  We have $ \|\mu_n - \mu\|^{a,b}= \W_1^{a,b}(\mu_n,\mu)= W_1^{a,b}(\mu_n^+ + \mu^-, \mu_n^-+ \mu^+)=\underset{n \to \infty}{\to} 0$. 
  We denote by $a_n \leq (\mu_n^+ + \mu^-)$ and $b_n\leq(\mu_n^-+\mu^+)$ a choice of minimizers realizing $W_1^{a,b}(\mu_n^+ + \mu^-, \mu_n^-+ \mu^+)$, as well as $\pi_n$ being a minimizing transference plan from $a_n$ to $b_n$.
 \rev{We have $|a_n|=|b_n|$. Following the wording of Example \ref{ex}, the measures $a_n$ and $b_n$ are the transported mass, 
 and the measures 
 $(\mu_n^+-\mu^--a_n$ and $(\mu_n^--\mu^+-b_n)$ are the cancelled mass.}
 
 {\bf Step 1. The cancelled mass.}
 We define by $a_n^+$ and $b_n^-$ the largest transported mass
 which is respectively below $\mu_n^+$ and $\mu_n^-$
   \begin{equation*}
 \begin{aligned}
  a_n^+ &= \mu_n^+ \wedge a_n,\\
  a_n^-&= a_n-a_n^+,
 \end{aligned}
  \qquad
\begin{aligned}
 b_n^-&= \mu_n^- \wedge b_n, \\
 b_n^+ &= b_n-b_n^-.
\end{aligned}
\end{equation*}
The mass which is cancelled is then $r_n= r_n^++r_n^-: = (\mu_n^+ - a_n^+) + (\mu^- - a_n^-)$
 and $r_n^* = r_n^{*,-}+ r_n^{*,+} := (\mu_n^- - b_n^-) +( \mu^+ - b_n^+)$.
 The cancelled mass $r_n$ and $r_n^*$ are expressed here 
 as the sum of two positive measures. Indeed, it is clear by definition that
 $a_n^+ \leq \mu_n^+$, and
 since $a_n \leq \mu_n^+ + \mu^-$, Lemma \ref{lemma:prop_min_measures}, gives that 
 $a_n^-= a_n-a_n\wedge \mu_n^+ \leq \mu^-$. We reason the same way for $r_n^*$.
 Then, we have $W_1^{a,b}(\mu_n^+ + \mu^-, \mu_n^-+ \mu^+)= a\left( |\mu_n^+-a_n^+|
 +  |\mu^--a_n^-| +  |\mu_n^--b_n^-|+ |\mu^+-b_n^+|\right) +  b W_1(a_n,b_n)$. 
 Since $W_1^{a,b}(\mu_n^+ + \mu^-, \mu_n^-+ \mu^+)$ goes to zero, each of the five terms of the above decomposition goes to zero, and in particular, 
 $|\mu_n^+-a_n^+|\underset{n \to \infty}{\to} 0$ and $|\mu_n^--b_n^-|\underset{n \to \infty}{\to} 0$  which implies that
 \begin{equation}
 \label{s2}
  W_1^{a,b}(\mu_n^+-a_n^+,0)\underset{n \to \infty}{\to} 0, \qquad 
    W_1^{a,b}(\mu_n^--b_n^-,0)\underset{n \to \infty}{\to} 0.
 \end{equation}

 {\bf Step 2. The transported mass.} 
  The mass $a_n^+$ is split into two pieces: $\nu_n$ is sent to $\mu_n^-$, and $\xi_n$ is sent to $\mu^+$.
Denote by $\bar{a}_n^+$ the image
of $a_n^+$ under $\pi_n$ \MT{(using Definition \ref{def:image})},
then we define $\nu_n^* = \bar{a}_n^+ \wedge \mu_n^-$. \MT{(Still using Definition \ref{def:image})}, we denote by 
$\nu_n$ the image of $\nu_n^*$ under $\pi_n$.
Then, we define $\xi_n$ such that $a_n^+= \nu_n+ \xi_n,$ and we denote by 
$\xi_n^*$ the image of $\xi_n$ under $\pi_n$.
  By definition, we have 
  \begin{equation}
  \label{s1}
   W_1(a_n,b_n)=  W_1(\nu_n,\nu_n^*)+  W_1(\xi_n,\xi_n^*) + W_1(w_n,w_n^*)+ W_1(\a_n,\a_n^*),
  \end{equation}
  with $a_n^+ = \nu_n+ \xi_n,$ $w_n^*$ is defined so that $b_n^-= \nu_n^* + w_n^*,$ $w_n$ is the image of $w_n^*$ under $\pi_n$, $\a_n$ is defined so that $ \mu^-= w_n+ \a_n,$ 
  $\a_n^*$ is the image of $\a_n$ under $\pi_n$, and it can be checked that
  $\mu^+ = \xi_n^*+ \a_n^*$. 
  Since $ W_1(a_n,b_n)\underset{n \to \infty}{\to} 0$, each of the four term of the sum \eqref{s1} is going to zero. 
 
 {\bf Step 3. Conclusion.}
 
  Let us write 
  \begin{equation*}
   z_n^+ = \nu_n + (\mu_n^+-a_n^+),\quad 
    z_n^- = \nu_n^* + (\mu_n^--b_n^-), \quad 
    m_n^+ = \xi_n, \quad m_n^-= w_n^*.
  \end{equation*}
We show here that the sequences defined hereinabove satisfy the 
conditions stated in (ii).
First, we have $z_n^+ + m_n^+ = \nu_n^+ +  (\mu_n^+-a_n^+)+ \xi_n = \mu_n^+$ and
similarly, $z_n^- + m_n^-=\nu_n^* + (\mu_n^--b_n^-) + w_n^* = \mu_n^-.$

Then, we have
\begin{equation*}
\begin{aligned}
 W_1^{a,b}(z_n^+,z_n^-)&=  W_1^{a,b}(\nu_n + (\mu_n^+-a_n^+),\nu_n^* + (\mu_n^--b_n^-))\\
& \leq W_1^{a,b}(\nu_n ,\nu_n^* ) + W_1^{a,b}(\mu_n^+-a_n^+,\mu_n^--b_n^-) \quad \text{using Lemma \ref{lemma:prop_gen_dist}}\\
& \leq b W_1(\nu_n ,\nu_n^* ) +W_1^{a,b}(\mu_n^+-a_n^+,0)+ W_1^{a,b}(0,\mu_n^--b_n^-)
\\&\underset{n \to \infty}{\to} 0, \text{ using \eqref{s2} and \eqref{s1}.}\\ \end{aligned}
 \end{equation*}
Here, we also used that for $|\mu|= |\nu|, \; W_1^{a,b}(\mu,\nu) \leq  b W_1(\mu,\nu)$.
This is trivial with the definition of $W_1^{a,b}$. 
Now, we also have
\begin{equation*}
\begin{aligned}
  W_1^{a,b}(m_n^+, \mu^+) = W_1^{a,b}(\xi_n,\mu^+) 
 & \leq W_1^{a,b}(\xi_n,\xi_n^*) +  W_1^{a,b}(\xi_n^*,b_n^+)+  W_1^{a,b}(b_n^+,\mu^+) \quad (\text{triangular inequality})
\\ & =  W_1^{a,b}(\xi_n,\xi_n^*) + W_1^{a,b}(\a_n^*,0) + W_1^{a,b}(\mu^+-b_n^+,0)
\end{aligned}
\end{equation*}
since $ \a_n^*+ \xi_n^*=b_n^+$. 
  We know that $W_1^{a,b}(\xi_n,\xi_n^*)\leq  b W_1(\xi_n,\xi_n^*)\underset{n \to \infty}{\to} 0$ using \eqref{s1}, 
  and that  $W_1^{a,b}(\mu^+-b_n^+,0)\underset{n \to \infty}{\to} 0$ using \eqref{s2}.
  Let us explain now why $W_1^{a,b}(\a_n^*,0) \underset{n \to \infty}{\to} 0$.
We remind that $W_1(\a_n,\a_n^*)\underset{n \to \infty}{\to} 0$, $\a_n \leq a_n^- \leq \mu^-, \; \a_n^*\leq b_n^+ \leq \mu^+$.
Since $(\a_n)_n$ is uniformly bounded in mass, then there exists $\a \in \M(\R^d)$ such that
$\a_{\varphi(n)} \underset{n\to \infty}{\rightharpoonup} \alpha$ vaguely
(see Lemma \ref{lemma:weak_compactness}).
We have also that $(\a_{\varphi(n)})_n$ is tight, since $\a_{\varphi(n)} \leq \mu^-$ which has a finite mass. 
Using Theorem 13 of \cite{PR12}, we deduce that 
$W_1^{a,b} (\a_{\varphi(n)}, \a)\underset{n \to \infty}{\to} 0 $.
Then, $W_1^{a,b} (\a^*_{\varphi(n)}, \a) \leq W_1^{a,b} (\a^*_{\varphi(n)}, \a_{\varphi(n)}) + 
 W_1^{a,b}(\a_{\varphi(n)}, \a) \leq 
 W_1 (\a^*_{\varphi(n)}, \a_{\varphi(n)}) + 
  W_1^{a,b}(\a_{\varphi(n)}, \a)
  \underset{n \to \infty}{\to} 0$.
 Then, using again Theorem 13 of \cite{PR12}, we deduce that 
 $\a^*_{\varphi(n)} \underset{n\to \infty}{\rightharpoonup} \alpha$ vaguely.
Since $\a_n \leq \mu^-$, we have $\a \leq \mu^-$. Likewise, 
$\a^*_n \leq \mu^+$ implies $\a \leq \mu^+$.
Since
$\mu^- \perp \mu^+$, we have $\a=0$.
We have $W_1^{a,b}(\a_{\varphi(n)}, 0) \underset{n \to \infty}{\to} 0$
and $W_1^{a,b}(\a_{\varphi(n)}, 0) \underset{n \to \infty}{\to} 0$.
The sequence $(\a_n)_n$ satisfies the following property: each of its subsequences admits a subsequence converging to zero.
Thus, we have that the whole sequence is converging to zero, i.e. 
$W_1^{a,b}(\a_n, 0) \underset{n \to \infty}{\to} 0$
and $W_1^{a,b}(\a^*_n, 0) \underset{n \to \infty}{\to} 0$.
Lastly, the tightness of $(m_n^+)_n$ and $(m_n^-)_n$ is given 
again by Theorem 13 of \cite{PR12}, since  $W_1^{a,b}(m_n^{\pm} , \mu^{\pm})\underset{n \to \infty}{\to} 0$.\\
  
  We prove now that $(ii) \Rightarrow (i)$.
  Let us assume (ii). We have 
  \begin{eqnarray*}
   \|\mu_n-\mu\|^{a,b} &=& W_1^{a,b}(\mu_n^+ +\mu^-,\mu_n^-+\mu^+) =   W_1^{a,b}(z_n^+ +m_n^+ +\mu^-,z_n^-+ m_n^-+\mu^+) \\
   &\leq& W_1^{a,b} (z_n^+,z_n^-)+ W_1^{a,b} (m_n^+,\mu^+)+ W_1^{a,b} (\mu^-,m_n^-) \underset{n \to \infty}{\to} 0,
  \end{eqnarray*}
\rev{where the last inequality comes from Lemma \ref{lemma:prop_gen_dist}.} This proves which is (i).
  \end{proof}
  
\section{Application to the transport equation with source term}
\label{sec:transport_equation}

\comMagali{This section is devoted to the use of the norm defined in Definition \ref{def:norm} to guarantee existence, 
uniqueness, and stability with respect to initial condition for the transport equation \eqref{transport_equation}.
}
\rev{
\begin{definition}[Measure-valued weak solution]
\label{def:mv_solution}
A measure-valued weak solution to \eqref{transport_equation} is a map 
$\mu \in \mathcal{C}^0([0,1]; \M^s(\R^d))$ such that $\mu_{t=0}=\mu_0$ and  for 
 all 
 $\Phi \in \D(\R^d)$
 it holds
\begin{equation}\label{eq:goal}
\dfrac{d}{dt}\langle \Phi ,  \mu_t \rangle  =\langle v[\mu_t]. \nabla \Phi ,\mu_t\rangle
+ \langle h[\mu_t], \Phi\rangle,
\end{equation}
where
 $$\langle \mu_t,\Phi\rangle:= \dst\int_{\R^d} \Phi(x) d\mu_t(x).$$
\end{definition}
}

\subsection{Estimates of the norm under flow action}
\label{subsection:41}

In this section, we extend the action of flows on probability measures to signed measures,
and state some estimates about the variation
of $\|\mu-\nu\|^{a,b}$ after the action of a flow on $\mu$ and $\nu$. 
Notice that for $\mu\in \M^s(\R^d)$ and $T$ a map, we have $T \# \mu = T \# \mu^+ - T\# \mu^-$, 
 where $\mu= \mu^+ -\mu^-$ is any decomposition of $\mu$. 
 Observe that in general, given $\mu\in \M^s(\R^d)$ and $T:\R^d\mapsto \R^d$ a Borel map, 
 it only holds $|T\#\mu|\leq |\mu|$, even by choosing the Jordan decomposition for $(\mu^+,\mu^-)$, since it may hold 
 that $T\#\mu^+$ and $T\#\mu^-$ are not orthogonal.
 However, if $T$ is injective (as it will be in the rest of the paper), it holds 
 $T\#\mu^+\perp T\#\mu^-$, hence $|T\#\mu|= |\mu|$.
\begin{lemma}
\label{lemma:characteristics}
 For $v(t,x)$ measurable in time, uniformly Lipschitz in space, and uniformly bounded, we denote by 
 $\Phi_t^v$ the flow it generates, i.e. the unique solution to
 \begin{equation*}
  \dfrac{d}{dt}\Phi_t^v= v(t,\Phi_t^v), \qquad \Phi_0^v=I_d.
 \end{equation*}
 Given $\mu_0 \in \M^s(\R^d)$, 
then, 
$\mu_t= \Phi_t^v \# \mu_0$ is the unique solution of the linear transport equation
\begin{equation*}
\left\{
 \begin{aligned}
&\dfrac{\p}{\p t}\mu_t  + \nabla.(v(t,x)\mu_t)=0,\\
 &\mu_{|t=0}= \mu_0
 \end{aligned}
\right.
\end{equation*}
in $\C([0,T], \M^s(\R^d))$.
 \end{lemma}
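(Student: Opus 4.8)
The plan is to construct the solution explicitly as the push-forward $\mu_t=\Phi_t^v\#\mu_0$ and to reduce every assertion to the already-understood case of positive measures. First I would record that, since $v$ is measurable in time and uniformly Lipschitz and bounded in space, the Carath\'eodory form of the Cauchy--Lipschitz theorem guarantees that $\Phi_t^v$ exists, is unique, and is a bi-Lipschitz homeomorphism of $\R^d$ for each $t$; in particular each $\Phi_t^v$ is injective. Writing $\mu_0=\mu_0^+-\mu_0^-$ for the Jordan decomposition, the observation just before the lemma (injective maps preserve mass and the orthogonality of the Jordan parts) yields $\Phi_t^v\#\mu_0^+\perp\Phi_t^v\#\mu_0^-$, so that $\mu_t^\pm=\Phi_t^v\#\mu_0^\pm$ is itself the Jordan decomposition of $\mu_t$ and $|\mu_t|=|\mu_0|$ for all $t$. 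This linearizes the problem: it suffices to handle the two positive pieces and subtract.

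Next I would verify that $\mu_t$ is a distributional solution. For $\Phi\in\D(\R^d)$ one has $\langle\Phi,\mu_t\rangle=\int_{\R^d}\Phi(\Phi_t^v(x))\,d\mu_0(x)$. Differentiating under the integral sign is legitimate because $\Phi$ is $\C^\infty$ with compact support, $\tfrac{d}{dt}\Phi_t^v(x)=v(t,\Phi_t^v(x))$ is bounded by $M$, and $\mu_0$ has finite mass, so dominated convergence applies for a.e. $t$; this gives
\[
\frac{d}{dt}\langle\Phi,\mu_t\rangle=\int_{\R^d}\nabla\Phi(\Phi_t^v(x))\cdot v(t,\Phi_t^v(x))\,d\mu_0(x)=\langle v(t,\cdot)\cdot\nabla\Phi,\mu_t\rangle,
\]
which is exactly the weak formulation of the linear equation. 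To place $\mu$ in $\C^0([0,T],\M^s(\R^d))$ I would then estimate the increments in the norm, using subadditivity and translation invariance from Lemma \ref{lemma:prop_gwd} together with $\W_1^{a,b}\le bW_1$ for positive measures of equal mass:
\[
\|\mu_{t+\tau}-\mu_t\|^{a,b}\le \W_1^{a,b}(\mu_{t+\tau}^+,\mu_t^+)+\W_1^{a,b}(\mu_{t+\tau}^-,\mu_t^-)\le b\,W_1(\mu_{t+\tau}^+,\mu_t^+)+b\,W_1(\mu_{t+\tau}^-,\mu_t^-).
\]
Choosing the coupling $(\Phi_t^v,\Phi_{t+\tau}^v)\#\mu_0^\pm$ and integrating the ODE to bound $\sup_x|\Phi_{t+\tau}^v(x)-\Phi_t^v(x)|\le M\tau$ yields $\|\mu_{t+\tau}-\mu_t\|^{a,b}\le bM|\mu_0|\,\tau$, i.e. Lipschitz continuity in time, hence the required continuity.

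The remaining and most delicate point is uniqueness. Given another solution $\nu_t\in\C^0([0,T],\M^s(\R^d))$ with $\nu_0=\mu_0$, I set $\sigma_t:=\nu_t-\mu_t$, a weak solution with $\sigma_0=0$, and show $\sigma_t\equiv0$ by a duality/characteristics argument. Fixing a terminal time $T'\le T$ and $\psi\in\D(\R^d)$, I would solve the backward transport equation $\partial_s\phi+v(s,\cdot)\cdot\nabla\phi=0$ with $\phi(T',\cdot)=\psi$, whose solution is $\phi(s,x)=\psi(\Phi_{s,T'}^v(x))$ (the flow from time $s$ to time $T'$), and compute $\tfrac{d}{ds}\langle\phi(s,\cdot),\sigma_s\rangle=0$; integrating over $[0,T']$ gives $\langle\psi,\sigma_{T'}\rangle=\langle\phi(0,\cdot),\sigma_0\rangle=0$, and since $\psi$ is arbitrary, $\sigma_{T'}=0$.

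The genuine obstacle is regularity: because $v$ is only Lipschitz in space, $\phi(s,\cdot)=\psi\circ\Phi_{s,T'}^v$ is merely Lipschitz with compact support rather than $\C^1$, so it is not directly admissible in the weak formulation \eqref{eq:goal}, and one must justify both the use of time-dependent Lipschitz test functions and the chain rule in $s$ under the mere measurability of $v$ in time. I would resolve this by regularizing $v$ in space (mollification), working with the resulting smooth flows and smooth test functions $\phi_\epsilon$, and passing to the limit using the stability of flows and of the narrow pairing; the flow-stability facts needed here are precisely those underlying the positive-measure transport theory of \cite{PR14}. This regularization step, rather than the formal duality computation, is where the real work lies.
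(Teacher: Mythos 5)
Your proposal is correct in its overall architecture but takes a genuinely different route from the paper: the paper's entire proof is a two-line citation of \cite[Theorem 5.34]{Villani_topics} (for the continuity equation driven by a Lipschitz flow) together with \cite[Theorem 2.1.1]{BressanPiccoli_book} (for the Carath\'eodory flow), whereas you reconstruct the content of those references from scratch and only then reduce to positive measures via the Jordan decomposition. The reduction itself is exactly the paper's implicit mechanism: the injectivity of $\Phi_t^v$ preserving the orthogonality $\Phi_t^v\#\mu_0^+\perp\Phi_t^v\#\mu_0^-$ is precisely the observation recorded just before the lemma. Your existence and time-continuity arguments are complete and correct; in particular the bound $\|\mu_{t+\tau}-\mu_t\|^{a,b}\le bM|\mu_0|\,\tau$ is the same estimate as the second item of Lemma \ref{lemma:flow}. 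What your route buys is a self-contained proof and an honest identification of where the difficulty sits (duality with merely Lipschitz test functions); what it costs is that this hardest step is only a plan. The mollification you outline does close: $v_\eps$ has Lipschitz constant at most $L$ uniformly in $\eps$, so $\nabla(\psi\circ\Phi^{v_\eps}_{s,T'})$ is bounded uniformly in $\eps$ and $s$, the supports stay in a fixed compact set because $|v_\eps|\le M$, and the commutator term $\langle (v-v_\eps)\cdot\nabla\phi_\eps,\sigma_s\rangle$ vanishes by uniform convergence of $v_\eps$ on that compact set. As written, though, this is a sketch rather than a proof, and it is the step the lemma actually needs.

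One subtlety that neither you nor the paper addresses deserves a sentence. In the signed setting, membership in $\C^0([0,T],\M^s(\R^d))$ for the norm $\|\cdot\|^{a,b}$ does \emph{not} give a uniform-in-$t$ bound on the total variation $|\nu_t|$ of a competitor solution: the paper's own example $\mu_n=n\,\delta_{1/n^2}-n\,\delta_{-1/n^2}$ shows the norm does not control the mass, and lower semicontinuity of the mass along a continuous curve does not yield an upper bound. Your limiting argument integrates $\langle(v-v_\eps)\cdot\nabla\phi_\eps,\sigma_s\rangle$ over $s\in[0,T']$ and therefore needs $\sup_{s\le T'}|\sigma_s|(K)<\infty$ on the relevant compact set $K$; this must either be extracted from the weak formulation or built into the solution concept. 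It is a small point, but it is exactly the kind of point that the paper's reduction to positive-measure references leaves unexamined.
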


 \begin{proof}
  The proof is a direct consequence of \cite[Theorem 5.34]{Villani_topics} combined 
  with  \cite[Theorem 2.1.1]{BressanPiccoli_book}. 
 \end{proof}

\begin{lemma}\label{lemma:flow}
Let $v$ and $w$ be two vector fields, both satisfying 
for all $t\in [0,1]$ and $x,y\in\R^d,\;$ the following properties: $$|v(t,x)-v(t,y)|\leq L|x-y|,\qquad
|v(t,x)|\leq M.$$
Let $\mu$ and $\nu$ be two  measures of $\M^s(\R^d)$. Then

\noindent
 \begin{itemize}
  \item $\|\phi_t^v \# \mu -\phi_t^v \# \nu \|^{a,b} \leq e^{Lt} \|\mu-\nu\|^{a,b}$  
  \item $\| \mu -\phi_t^v \# \mu \|^{a,b} \leq b \; t \; M|\mu|,$
 \item $\|\phi_t^v \# \mu -\phi_t^w \# \mu \|^{a,b} \leq  b|\mu|\frac{(e^{Lt}-1)}{L} \|v-w\|_{\rev{L^{\infty}(0,1;\;\C^0)}}$
 \item $\|\phi_t^v \# \mu -\phi_t^w \# \nu \|^{a,b} \leq e^{Lt} \|\mu-\nu\|^{a,b} + b \; \MT{\min\{|\mu|,|\nu|\}} \frac{(e^{Lt}-1)}{L} \|v-w\|_{\rev{L^{\infty}(0,1;\;\C^0)}}$
 \end{itemize}
\end{lemma}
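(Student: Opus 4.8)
The plan is to reduce all four inequalities to estimates for positive measures of equal mass, where $W_1^{a,b}$ is controlled by $b\,W_1$, and then to feed in two classical Grönwall bounds on the flow. Throughout I would fix the Jordan decompositions $\mu=\mu^+-\mu^-$, $\nu=\nu^+-\nu^-$ and exploit that $\phi_t^v$ is a bijection of $\R^d$: as recalled just before the statement, this gives $(\phi_t^v\#\mu)^\pm=\phi_t^v\#\mu^\pm$ and $|\phi_t^v\#\mu^\pm|=|\mu^\pm|$, so the push-forward commutes with the Jordan decomposition and preserves mass. I would also record the elementary fact, immediate from \eqref{e-wp} by taking $\tilde\mu=\mu$, $\tilde\nu=\nu$, that $W_1^{a,b}(\sigma,\tau)\le b\,W_1(\sigma,\tau)$ whenever $\sigma,\tau$ are positive with $|\sigma|=|\tau|$. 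The two flow estimates I need, obtained from $\phi_t^v(x)=x+\int_0^t v(s,\phi_s^v(x))\,ds$ and the assumptions $|v(t,x)-v(t,y)|\le L|x-y|$, $|v|\le M$ (direct integration for the first, Grönwall's lemma for the other two, cf. \cite{BressanPiccoli_book}), are
\begin{equation*}
|x-\phi_t^v(x)|\le Mt,\qquad |\phi_t^v(x)-\phi_t^v(y)|\le e^{Lt}|x-y|,\qquad |\phi_t^v(x)-\phi_t^w(x)|\le \frac{e^{Lt}-1}{L}\,\|v-w\|_{L^{\infty}(0,1;\,\C^0)}.
\end{equation*}

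For the first bullet I would write, using Definition \ref{def:GWDSM} and bijectivity, $\|\phi_t^v\#\mu-\phi_t^v\#\nu\|^{a,b}=W_1^{a,b}\big(\phi_t^v\#(\mu^++\nu^-),\phi_t^v\#(\mu^-+\nu^+)\big)$, then take a minimizer $(\tilde\sigma,\tilde\tau,\pi)$ for the positive-measure distance $W_1^{a,b}(\mu^++\nu^-,\mu^-+\nu^+)$ with $\tilde\sigma\le\mu^++\nu^-$, $\tilde\tau\le\mu^-+\nu^+$ (Lemma \ref{lemma:infimum}) and push it forward: the removed masses are unchanged because $\phi_t^v$ preserves mass, while $(\phi_t^v\times\phi_t^v)\#\pi$ has transport cost at most $e^{Lt}\,W_1(\tilde\sigma,\tilde\tau)$ by the second flow estimate; since $e^{Lt}\ge1$ the cancellation term is bounded by its own $e^{Lt}$ multiple, and factoring out $e^{Lt}$ gives the claim. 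For the second and third bullets I would instead invoke the subadditivity of $\W_1^{a,b}$ (Lemma \ref{lemma:prop_gwd}) to split along the Jordan decomposition into equal-mass positive problems, for instance $\|\mu-\phi_t^v\#\mu\|^{a,b}\le W_1^{a,b}(\mu^+,\phi_t^v\#\mu^+)+W_1^{a,b}(\mu^-,\phi_t^v\#\mu^-)$, and for each sign bound $W_1^{a,b}\le bW_1\le b\int|x-\phi_t^v(x)|\,d\mu^\pm\le bMt|\mu^\pm|$ via the plan $(\mathrm{id},\phi_t^v)\#\mu^\pm$; summing yields $bMt|\mu|$. The third bullet is identical with the plan $(\phi_t^v,\phi_t^w)\#\mu^\pm$ and the third flow estimate. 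The fourth bullet is the triangle inequality interposing $\phi_t^w\#\mu$ (first plus third bullet, giving the factor $|\mu|$) or $\phi_t^v\#\nu$ (giving $|\nu|$), and taking the smaller of the two produces the $\min\{|\mu|,|\nu|\}$ coefficient.

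The Grönwall estimates and the mass bookkeeping are routine. The step needing genuine care, and which I regard as the main obstacle, is the first bullet: one must check that pushing an optimal triple forward leaves the cancellation cost untouched while only the transport cost absorbs the factor $e^{Lt}$, which is exactly what injectivity (hence mass preservation) and $e^{Lt}\ge1$ deliver. The reduction to positive measures in the remaining bullets is clean only because the flow is a bijection and therefore commutes with the Jordan decomposition; without injectivity the identities $|\phi_t^v\#\mu^\pm|=|\mu^\pm|$ and the equal-mass shortcut $W_1^{a,b}\le bW_1$ would both fail.
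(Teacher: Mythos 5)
Your proof is correct and follows essentially the same route as the paper: reduce the signed case to positive measures via the decomposition and the subadditivity of $\W_1^{a,b}$, apply the positive-measure flow estimates for each sign, and obtain the fourth bullet by the triangle inequality through $\phi_t^w\#\mu$ or $\phi_t^v\#\nu$, taking the minimum. The only difference is that the paper cites \cite[Proposition 10]{PR16} for the positive-measure estimates, whereas you re-derive them from explicit transport plans (pushing forward an optimal triple, resp.\ using $(\mathrm{id},\phi_t^v)\#\mu^\pm$ and $(\phi_t^v,\phi_t^w)\#\mu^\pm$) together with Gr\"onwall bounds; this is a valid unpacking of the cited result rather than a different argument.
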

\begin{proof}
The first three inequalities follow from \cite[Proposition 10]{PR16}.
For the first inequality, we write
\begin{equation*}
\begin{aligned}
 \|\phi_t^v \# \mu -\phi_t^v \# \nu \|^{a,b} =  \W_1^{a,b}(\phi_t^v \# \mu,\phi_t^v \# \nu) 
 &= \W_1^{a,b}(\phi_t^v \# \mu^+ - \phi_t^v \# \mu^-,\phi_t^v \# \nu^+ - \phi_t^v \# \nu^- )
  \\ &= W_1^{a,b}(\phi_t^v \# (\mu^+ + \nu^-),\phi_t^v \# (\mu^- + \nu^+) ) 
  \\& \leq e^{Lt} W_1^{a,b}(\mu^+ + \nu^-, \mu^-+ \nu^+)\quad \text{ by \cite[Prop. 10]{PR16}}
  \\& = e^{Lt} \|\mu-\nu\|^{a,b}.
\end{aligned}
\end{equation*}
For the second inequality, 
\begin{equation*}
 \begin{aligned}
  \W_1^{a,b}(\mu, \phi_t^v \#\mu) &= W_1^{a,b}(\mu^+ + \phi_t^v \# \mu^-, \mu^- + \phi_t^v \# \mu^+)\\
  &\leq  W_1^{a,b}(\mu^+,   \phi_t^v \# \mu^+)+  W_1^{a,b}(\mu^-,   \phi_t^v \# \mu^-) \quad \text{(Lemma \ref{lemma:prop_gen_dist})}
 \\ & \leq b\; t \; \|v\|_{\C^0}(|\mu^+| + |\mu^-|) \quad \text{ by \cite[Prop. 10]{PR16}}
 \\& = b\; t \;  \|v\|_{L^{\infty}(0,1;\;\C^0(\R))}|\mu| \quad \text{since $\mu= \mu^+-\mu^-$ is the Jordan decomposition.} 
 \end{aligned}
\end{equation*}
The third inequality is given by
\begin{equation*}
\begin{aligned}
 \|\phi_t^v \#  \mu- \phi_t^w \# \mu\|^{a,b}&= \W_1^{a,b}(\phi_t^v \# \mu^++ \phi_t^w \# \mu^-,\phi_t^w \# \mu^++ \phi_t^v \# \mu^-) \\
 &\leq W_1^{a,b}(\phi_t^v \# \mu^+,\phi_t^w \# \mu^+) + W_1^{a,b}( \phi_t^w \# \mu^-, \phi_t^v \# \mu^-) \\
&\leq b W_1(\phi_t^v \# \mu^+,\phi_t^w \# \mu^+) + W_1( \phi_t^w \# \mu^-, \phi_t^v \# \mu^-) \\
&\leq (| \mu^+| + |\mu^-| )\dfrac{(e^{Lt}-1)}{L}\|v-w\||_{L^{\infty}(0,1;\;\C^0(\R))} \quad \text{using  \cite[Prop. 10]{PR16} with $\mu=\nu$.}
\end{aligned}
\end{equation*}

 The last inequality is deduced from the first and the third one using triangular inequality. 
\end{proof}

\subsection{A scheme for computing solutions of the transport equation}
\label{s-scheme}

In this section, we build a solution to \eqref{transport_equation} as the limit of a sequence of approximated solutions defined in the following scheme. We then prove that \eqref{transport_equation} admits a unique solution.

Consider $\mu_0 \in \M^s(\R^d)$ such that $\supp(\mu_0)\subset \mathcal{K}$,  with $\mathcal{K}$ compact. Let $v \in \C^{0,Lip}(\M^s(\R^d), \C^{0,Lip}(\R^d))$ and $h \in \C^{0,Lip}(\M^s(\R^d),\M^s(\R^d) )$ satisfying 
\ref{H1}-\ref{H2}-\ref{H3}.
 We now define a sequence $(\mu_t^k)_k$  of approximated solutions for \eqref{transport_equation}
through the following Euler-explicit-type iteration scheme.
For simplicity of notations, we define a solution on the time interval $[0,1]$.
\newline

 \noindent\fbox{\parbox{\linewidth-2\fboxrule-2\fboxsep}{
\begin{center}
 \textsc{Scheme}
\end{center}

\noindent{\bf Initialization.}
Fix $k\in \N$. Define $\Delta t= \dfrac{1}{2^k}$.
Set  $\mu_0^k = \mu_0$.

\noindent{\bf Induction.}
Given $\mu_{i\Delta t}$ for $i\in \{0,1, \dots,2^k-1\}$,
define $v^k_{i\Delta t}:=v[\mu_{i\Delta t}^k]$ 
and 
\begin{equation}
\label{eqt:scheme}
\mu_t^k= \Phi_{t-i\Delta t}^{v_{i\Delta t}} \# \mu_{i\Delta t}^k+ (t-i\Delta t)h[\mu_{i\Delta t}^k], \qquad t\in [i{\Delta t}, (i+1)\Delta t].
    \end{equation}
}}
\\

\rev{The scheme is a natural operator splitting: the flow $ \Phi_{t-i\Delta t}$ encodes the transport part $\partial_t\mu+\div(v\mu)=0$ while $(t-i\Delta t)h$ encodes the reaction $\partial_t\mu = h$.}
\begin{proposition}\label{prop:Cauchy_sequence}
 The sequence $(\mu_t^k)_k$ defined in the scheme above is a Cauchy sequence in the space
 $\C^0([0,1], \M^s(\R^d), \|.\|)$ with 
 \begin{equation*}
 \|\mu_t\|= \sup_{t\in [0,1]} \|\mu_t\|^{a,b}.  
 \end{equation*}
 Moreover, it is uniformly bounded in mass \rev{and compactly supported}, i.e. 
 \begin{equation}
 \label{uniform_estimate}
|\mu^k_t| <P \rev{t} +|\mu_0|, \qquad \rev{ 
supp\{\mu_t\} \subset B(0,R'+tM),\; 
 \qquad 0 \leq t\leq 1,
}
 \end{equation}
for  $R'$ such that 
$\left(\supp\{\mu_0\}\cup B_0(R)\right)\subset B_0(R')$.
\end{proposition}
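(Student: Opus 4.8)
The plan is to obtain the uniform bounds \eqref{uniform_estimate} by induction on the time steps, and then to prove the Cauchy property by comparing two consecutive levels $k$ and $k+1$ of the scheme through a discrete Gronwall argument. For the \emph{a priori} estimates I argue by induction over $i$. Each flow $\Phi^{v}_{s}$ is a diffeomorphism, hence injective, so push-forward preserves total variation and $|\Phi^{v_{i\Delta t}}_{t-i\Delta t}\#\mu^k_{i\Delta t}|=|\mu^k_{i\Delta t}|$; together with $|h[\mu]|\le P$ from \ref{H3}, relation \eqref{eqt:scheme} yields $|\mu^k_t|\le|\mu^k_{i\Delta t}|+(t-i\Delta t)P$ on $[i\Delta t,(i+1)\Delta t]$, and the induction gives $|\mu^k_t|\le|\mu_0|+Pt$. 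For the support, $|v[\mu]|\le M$ from \ref{H2} makes $\Phi^{v}_{s}$ displace points by at most $sM$, while $\supp(h[\mu])\subset B_0(R)\subset B_0(R')$; assuming $\supp\{\mu^k_{i\Delta t}\}\subset B_0(R'+i\Delta t\,M)$, the transported part of \eqref{eqt:scheme} lies in $B_0(R'+tM)$ and the source part in $B_0(R')$, closing the induction.

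For the Cauchy property, set $\Delta t=2^{-k}$, $s=\Delta t/2$, let $t_i=i\Delta t$ be the coarse nodes (which are also level-$(k+1)$ nodes), and put $d_i:=\|\mu^k_{t_i}-\mu^{k+1}_{t_i}\|^{a,b}$, with $d_0=0$. I introduce the level-$(k+1)$ one-step map $S(\rho):=\Phi^{v[\rho]}_{s}\#\rho+s\,h[\rho]$, so that on a coarse interval $\mu^{k+1}$ realizes $S\circ S$ while $\mu^k$ realizes one flow-plus-source step of length $\Delta t$. Running both starting from the common datum $\mu^k_{t_i}$ and using the triangle inequality, I split
\[
d_{i+1}\le \mathcal E_i+\big\|S\!\circ\! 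S(\mu^k_{t_i})-S\!\circ\! S(\mu^{k+1}_{t_i})\big\|^{a,b},
\]
where $\mathcal E_i$ is the local consistency error between the single coarse step and $S\circ S$, both issued from $\mu^k_{t_i}$.

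The second term is controlled by a \textbf{stability} estimate for $S$: the fourth inequality of Lemma \ref{lemma:flow} combined with \ref{H1} and \ref{H3}, and the uniform mass bound $\bar m:=|\mu_0|+P$, gives $\|S(\rho)-S(\sigma)\|^{a,b}\le\big(e^{Ls}+bK\bar m\tfrac{e^{Ls}-1}{L}+sQ\big)\|\rho-\sigma\|^{a,b}\le e^{Cs}\|\rho-\sigma\|^{a,b}$ for a suitable $C$ and $s$ small, hence $\|S\circ S(\mu^k_{t_i})-S\circ S(\mu^{k+1}_{t_i})\|^{a,b}\le e^{C\Delta t}d_i$. The \textbf{consistency} term requires $\mathcal E_i=O(\Delta t^2)$: freezing the field on the coarse interval makes it time-independent, so the semigroup identity $\Phi^{v[\mu^k_{t_i}]}_{2s}=\Phi^{v[\mu^k_{t_i}]}_{s}\circ\Phi^{v[\mu^k_{t_i}]}_{s}$ holds, and the difference between the coarse step and $S\circ S$ reduces to a few terms each governed by the second and third inequalities of Lemma \ref{lemma:flow}, by \ref{H1}, by \ref{H3}, and by $\|\mu^k_{t_i}-S(\mu^k_{t_i})\|^{a,b}\le s(bM\bar m+aP)$ (itself $O(s)$, using $\|h[\mu]\|^{a,b}\le a|h[\mu]|\le aP$); each such term carries one extra power of $s$, yielding $\mathcal E_i\le C'\Delta t^2$. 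Feeding these two estimates into $d_{i+1}\le C'\Delta t^2+e^{C\Delta t}d_i$ and iterating the discrete Gronwall inequality over the $2^k$ coarse steps gives $\sup_i d_i\le e^{C}C'\,2^k\Delta t^2=e^{C}C'\,2^{-k}$.

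It remains to pass from nodes to all times: the Lipschitz-in-time bound $\|\mu^\ell_t-\mu^\ell_{t'}\|^{a,b}\le(bM\bar m+aP)\,|t-t'|$, a direct consequence of \eqref{eqt:scheme} and the second inequality of Lemma \ref{lemma:flow}, gives $\sup_{t\in[0,1]}\|\mu^k_t-\mu^{k+1}_t\|^{a,b}\le\sup_i d_i+2(bM\bar m+aP)\Delta t=O(2^{-k})$. Summing the geometric series then shows $\sup_{t}\|\mu^k_t-\mu^{k'}_t\|^{a,b}\le\sum_{j\ge k}O(2^{-j})\to0$, so $(\mu^k)_k$ is Cauchy in $\C^0([0,1],\M^s(\R^d))$. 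I expect the main obstacle to be the consistency estimate $\mathcal E_i=O(\Delta t^2)$: it is precisely the quadratic (not merely linear) local decay that, summed over the $2^k$ steps, still yields a net $O(2^{-k})$ bound and hence summability across levels, and securing it forces one to use simultaneously the semigroup property of the frozen flow and the sharp, mass-weighted estimates of Lemma \ref{lemma:flow}.
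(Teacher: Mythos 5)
Your proposal is correct and follows essentially the same strategy as the paper: compare levels $k$ and $k+1$ over each coarse interval using the flow estimates of Lemma \ref{lemma:flow} together with \ref{H1}--\ref{H3}, obtain a node recursion of the form $d_{i+1}\le(1+C\Delta t)\,d_i+C'\Delta t^2$, close it by discrete Gronwall to get an $O(2^{-k})$ bound, and sum the geometric series; the mass and support bounds are proved by the same induction on the time steps. The only difference is presentational — you package the two-half-steps-versus-one-coarse-step comparison as consistency plus stability of the one-step map $S$ and recover intermediate times via a Lipschitz-in-time bound, whereas the paper carries the estimate for all intermediate $t$ directly through its two sub-cases — but the key inequalities and constants are the same.
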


\rev{Let us mention that the estimate \eqref{uniform_estimate} is expected at the discrete level from the PDE \eqref{transport_equation} with the assumptions \ref{H1}, \ref{H2}, \ref{H3}.
Indeed, the transport part should preserve mass (more precisely $|T\#\mu|\leq|\mu|$
as discussed in subsection\ref{subsection:41}, while the reaction term $|h|\leq P$ gives a mass growth that is at most linear $Pt$.
Likewise, the support estimate is expected from the PDE since $h$ has support in $B_0(R)$ (no mass created out of this ball) and transport cannot expand the support faster than $|v|\leq M$.}

\begin{proof} Let $L$ be the Lipschitz constant in \ref{H2}.
We assume to have $k$ sufficiently large to have $e^{Lt}\leq 1+2Lt$ for all $t\leq[0,\Delta t]$. This holds e.g. for $L\Delta t\leq 1$, hence $k\geq \log_2(L)$.

 We also notice that the sequence built by the scheme satisfies 
 \begin{equation}\label{bounded_mass}
  |\mu_t^k|\leq P \rev{t} + |\mu_0|, \quad t\in[0,1],
 \end{equation}
 where $P$ is such that $\left|h[\mu]\right| \leq P$ by \ref{H3}.
Indeed, it holds for $t\in [i{\Delta t}, (i+1)\Delta t]$
\begin{equation*}
   |\mu_t^k|\leq |\Phi_t^{v_{i\Delta t}} \# \mu_{i\Delta t}^k|+ \Delta t|h[\mu_{i\Delta t}^k]|
\leq |\mu_{i\Delta t}^k| + \Delta t P,
\end{equation*}
 and the result follows by induction \rev{on $i$ (for $k$ fixed)}.
  This proves \eqref{uniform_estimate}.
 The sequence $(\mu_t^k)_{k\in\N}$ also has uniformly bounded support. Indeed, \rev{first observe that $\supp\{\mu\} = \supp\{ \mu^+\} \cup \supp\{\mu^-\}$, where $(\mu^+, \mu^-)$ is the Jordan decomposition of $\mu$. Choose $\mathcal{K}$ such that $\supp\{\mu_0\}\subset \mathcal{K}$ and } use \eqref{eqt:scheme} and \ref{H2}-\ref{H3} to write
 \begin{equation*}
  \supp\{\mu_t^k\} \subseteq\mathcal{K}_{t,M, R} , 
 \end{equation*}
 with
 \begin{equation*}
\mathcal{K}_{t,M,R}:= \{x \in \R^d,\; x=x_{\mathcal{K},R}+x', \; x_{\mathcal{K},R}\in \mathcal{K}\cup B_0(R),\; \|x'\|\leq tM  \}.
\end{equation*}
Take now $R'$ such that $\mathcal{K}\cup B_0(R)\subset B_0(R')$. Then, it holds $\mathcal{K}_{t,M,R}\subset B(0,R'+\rev{t}M)$. Since such set does not depend on $t$ \rev{for $0\leq t\leq 1$}, while $M,R$ are fixed, then $\mu^k_t$ have uniformly bounded support.

 We now follow the notations of \cite{PR12}
and define $m_j^k:=\mu_{\frac{j}{2^k}}^k$, $v_j^k:= v[m_j^k]$
and the corresponding flow $f_t^{j,k}: = \phi_t^{v_j^k}$.
Fix $k\in \N$ and $t\in [0,1]$. Define $j \in \{0,1,\dots, 2^k\}$ such that $t \in\left] \frac{j}{2^k},\frac{j+1}{2^{k}}\right]$
\rev{The following inequalities rely on Lemma \ref{lemma:flow} and 
\ref{H1}, \ref{H2}, \ref{H3}.}

\noindent{\bf First case.}
If $t \in \left] \frac{j}{2^k},\frac{2j+1}{2^{k+1}}\right]$.

\begin{center}
\end{center}
We call $t'=t-\frac{j}{2^k} \leq\frac{1}{2^{k+1}} $ and we obtain
\begin{equation*}
\begin{aligned}
\W_1^{a,b}( \mu_t^k, \mu_t^{k+1})&=\W_1^{a,b}(f_{t'}^{j,k} \#m_j^k+t'h[m_j^k],f_{t'}^{2j,k+1} \#m_{2j}^{k+1}+t'h[m_{2j}^{k+1}]) \\
& \leq \W_1^{a,b}(f_{t'}^{j,k} \#m_j^k,f_{t'}^{2j,k+1} \#m_{2j}^{k+1}) + \W_1^{a,b}(t'h[m_j^k], t'h[m_{2j}^{k+1}])  \\
&\leq  e^{Lt'}\W_1^{a,b}(m_j^k,m_{2j}^{k+1}) + |m_j^k|\dfrac{(e^{Lt'}-1)}{L}\|v_j^k-v_{2j}^{k+1}\|_{\C^0(\R^d)}+ t'Q\W_1^{a,b}(m_j^k,m_{2j}^{k+1})\\
&\leq \W_1^{a,b}(m_j^k,m_{2j}^{k+1})\left(e^{Lt'} + (P+|\mu^0|)\dfrac{1}{L}(e^{Lt'}-1)+t'Q \right)\\
\end{aligned}
\end{equation*}
Since it holds
\begin{equation*}
 e^{Lt'} \leq 1+ 2 Lt' \leq  1+ 2L 2^{-(k+1)}, \quad \dfrac{(e^{Lt'}-1)}{L} \leq 2\cdot 2^{-(k+1)},
\end{equation*}

we have 
\begin{equation}
\label{fc}
\begin{aligned}
\|\mu_t^k-\mu_t^{k+1}\|^{a,b}& \leq \|m_j^k-m_{2j}^{k+1}\|^{a,b}\left(1+2^{-(k+1)}\left(2L+ 2(P+|\mu^0|) + Q  \right) \right), \quad t \in \left[ \frac{j}{2^k},\frac{2j+1}{2^{k+1}}\right].
\end{aligned}
\end{equation}

\noindent{\bf Second case.}
If $t \in \left] \frac{2j+1}{2^{k+1}},\frac{j+1}{2^{k}}\right]$.

\begin{center}
\includegraphics[width=1\textwidth]{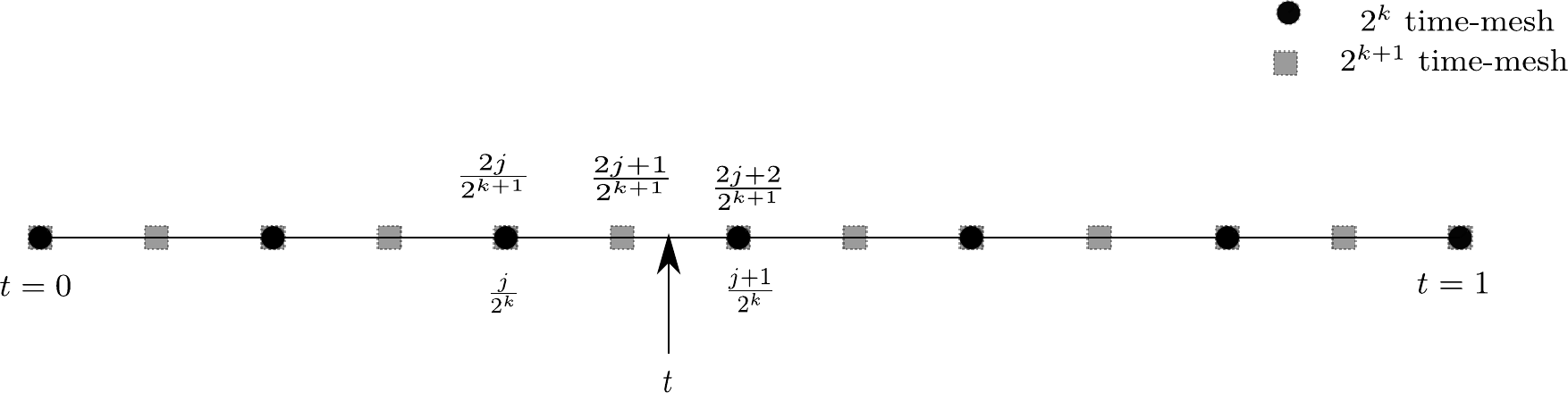}
\end{center}
We call $t'=t-\frac{2j+1}{2^{k+1}} \leq\frac{1}{2^{k+1}} $ and
we obtain
\begin{equation*}
\begin{aligned}
 &\mu_t^k = f_{t'+ \frac{1}{2^{k+1}}}^{j,k}\# m_j^k+ \left(t'+\frac{1}{2^{k+1}}\right)h[m_j^k]= 
f_{t'}^{j,k}\#  f_{\frac{1}{2^{k+1}}}^{j,k} \# m_j^k+ t'h[m_j^k] +\frac{1}{2^{k+1}}h[m_j^k],\\
 &\mu_t^{k+1} = f_{t'}^{2j+1,k+1} \# \left(f_{\frac{1}{2^{k+1}}}^{2j,k+1} \# m_{2j}^{k+1}+ \dfrac{1}{2^{k+1}}h[m_{2j}^{k+1}]\right)+t'h\left[ f_{\frac{1}{2^{k+1}}}^{2j,k+1} \# m_{2j}^{k+1}+ \dfrac{1}{2^{k+1}}h[m_{2j}^{k+1}]\right]
\\&  =  f_{t'}^{2j+1,k+1} \# f_{\frac{1}{2^{k+1}}}^{2j,k+1} \# m_{2j}^{k+1} + \dfrac{1}{2^{k+1}} f_{t'}^{2j+1,k+1} \# h[m_{2j}^{k+1}] +t'h\left[ f_{\frac{1}{2^{k+1}}}^{2j,k+1} \# m_{2j}^{k+1}+ \dfrac{1}{2^{k+1}}h[m_{2j}^{k+1}]\right].
\end{aligned}
\end{equation*}
It then holds
\begin{equation}\label{e0}\begin{aligned}
\|\mu_t^k- \mu_t^{k+1}\|^{a,b} &\leq \W_1^{a,b}\left(f_{t'}^{j,k}\#  f_{\frac{1}{2^{k+1}}}^{j,k} \# m_j^k,f_{t'}^{2j+1,k+1} \# f_{\frac{1}{2^{k+1}}}^{2j,k+1} \# m_{2j}^{k+1}\right)\\
&+\dfrac{1}{2^{k+1}}\W_1^{a,b}\left(h[m_j^k],f_{t'}^{2j+1,k+1} \# h[m_{2j}^{k+1}] \right)
\\& +t'\W_1^{a,b}\left(h[m_j^k],h\left[ f_{\frac{1}{2^{k+1}}}^{2j,k+1} \# m_{2j}^{k+1}+ \dfrac{1}{2^{k+1}}h[m_{2j}^{k+1}] \right] \right).
\end{aligned}
\end{equation}
Use now Lemma \ref{lemma:flow} to prove the following estimate
\begin{equation*}
 \begin{aligned}
  \W_1^{a,b}&\left(f_{t'}^{j,k}\#  f_{\frac{1}{2^{k+1}}}^{j,k} \# m_j^k,f_{t'}^{2j+1,k+1} \# f_{\frac{1}{2^{k+1}}}^{2j,k+1} \# m_{2j}^{k+1}\right)
\\ & \leq (1+2L 2^{-(k+1)}) \W_1^{a,b}\left( f_{\frac{1}{2^{k+1}}}^{j,k} \# m_j^k, f_{\frac{1}{2^{k+1}}}^{2j,k+1} \# m_{2j}^{k+1}\right)
+2^{-(k+1)}2P\|v_j^k-v_{2j+1}^{k+1}\|_{\C^0(\R^d)}. \\
\end{aligned}
\end{equation*}
Since, according to the first case, it holds both
\begin{equation*}
 \W_1^{a,b}\left( f_{\frac{1}{2^{k+1}}}^{j,k} \# m_j^k, f_{\frac{1}{2^{k+1}}}^{2j,k+1} \# m_{2j}^{k+1}\right)
\leq \|m_j^k-m_{2j}^{k+1}\|^{a,b}\left(1+2^{-(k+1)}\left(2L+ 2(P+|\mu^0|)  \right) \right)
\end{equation*}
and
\begin{equation*}
\begin{aligned}
   \|v_j^k-v_{2j+1}^{k+1}\|_{\C^0(\R^d)}  \leq K \W_1^{a,b}(m_j^k,m_{2j+1}^{k+1})& \leq K \W_1^{a,b}(m_j^k,m_{2j}^{k+1})+K \W_1^{a,b}(m_{2j}^{k+1},m_{2j+1}^{k+1})\\
   & \leq K \W_1^{a,b}(m_j^k,m_{2j}^{k+1})+K \W_1^{a,b}(m_{2j}^{k+1},m_{2j+1}^{k+1})\\
  &=K\W_1^{a,b}(m_j^k,m_{2j}^{k+1})+K \W_1^{a,b}(m_{2j}^{k+1},f_{\frac{1}{2^{k+1}}}^{2j,k+1} \# m_{2j}^{k+1})\\
&=K\W_1^{a,b}(m_j^k,m_{2j}^{k+1})+KM2^{-(k+1)},\\
 \end{aligned}
 \end{equation*}
we have
\begin{equation}\label{e1}
\begin{aligned}
 \W_1^{a,b}&\left(f_{t'}^{j,k}\#  f_{\frac{1}{2^{k+1}}}^{j,k} \# m_j^k,f_{t'}^{2j+1,k+1} \# f_{\frac{1}{2^{k+1}}}^{2j,k+1} \# m_{2j}^{k+1}\right)
\\ &
\leq \|m_j^k-m_{2j}^{k+1}\|^{a,b}\left(1+2^{-(k+1)}\left(4L+ 2(P+|\mu^0|)(1+L) +2KP  \right) \right)
+2^{-2(k+1)}2PKM.
\end{aligned}
\end{equation}

Moreover, it also holds both
\begin{equation}\label{e2}
\begin{aligned}
 \W_1^{a,b}&\left(h[m_j^k],f_{t'}^{2j+1,k+1} \# h[m_{2j}^{k+1}] \right) \\
 &\leq
  \W_1^{a,b}\left(h[m_j^k],f_{t'}^{2j+1,k+1} \# h[m_{j}^{k}] \right)
  +  \W_1^{a,b}\left(f_{t'}^{2j+1,k+1} \# h[m_{j}^{k}] ,f_{t'}^{2j+1,k+1} \# h[m_{2j}^{k+1}] \right)
 \\ &\leq t'MP+e^{Lt'}Q\|m_j^k- m_{2j}^{k+1}\|^{a,b} \leq + MP2^{-(k+1)} + (1+ 2L2^{-(k+1)})\|m_j^k- m_{2j}^{k+1}\|^{a,b} ,
 \end{aligned}
\end{equation}
and 
\begin{equation}\label{e3}
\begin{aligned}
 \W_1^{a,b}&\left(m_j^k, f_{\frac{1}{2^{k+1}}}^{2j,k+1} \# m_{2j}^{k+1}+ \dfrac{1}{2^{k+1}}h[m_{2j}^{k+1}]  \right)
\\&\leq \W_1^{a,b}\left(m_j^k, f_{\frac{1}{2^{k+1}}}^{2j,k+1} \# m_{2j}^{k+1} \right)
 +2^{-(k+1)}\W_1^{a,b}\left(0, h[m_{2j}^{k+1}] \right)
 \\ & \leq  \W_1^{a,b}\left(m_j^k,m_{2j}^{k+1}\right)
+ \W_1^{a,b}\left(m_{2j}^{k+1}, f_{\frac{1}{2^{k+1}}}^{2j,k+1} \# m_{2j}^{k+1} \right)
+ 2^{-(k+1)}aP \\
&\leq \|m_{j}^{k}- m_{2j}^{k+1}\|^{a,b} + 2^{-(k+1)}(|\mu^0|+P(1+a) ). 
 \end{aligned}
 \end{equation}

Plugging \eqref{e1}, \eqref{e2} and \eqref{e3} into \eqref{e0}, and combining it with \eqref{fc} we find {\bf in both cases}
\begin{equation}
\label{both_cases}
\begin{aligned}
\|\mu_t^k- \mu_t^{k+1}\|^{a,b} \leq (1+2^{-k}C_1) \|m_j^k- m_{2j}^{k+1}\|^{a,b} +C_2 2^{-2k}, 
\quad t \in \left] \frac{j}{2^{k}},\frac{j+1}{2^{k}}\right],
\end{aligned}
\end{equation}
with 
\begin{equation*}
  C_1= \left(1+3L+ (P+|\mu^0|)(1+L) +KP +Q \right) , \quad C_2=\dfrac{1}{4}(MP(1+2K)+ |\mu^0|+P(1+a)).    
     \end{equation*}
     \rev{In particular, plugging $t=(j+1)/2^k$ in \eqref{both_cases}, we get
\begin{equation*}\begin{aligned}
\|m_{j+1}^k-m_{2(j+1)}^{k+1}\|^{a,b} \leq (1+2^{-k}C_1) \|m_j^k- m_{2j}^{k+1}\|^{a,b} +C_2 2^{-2k}, 
\end{aligned}
\end{equation*}     
and by induction on $j$ (for $k$ fixed), we obtain 
 \begin{equation*}
 \|m_{2^k}^k-m_{2^{k+1}}^{k+1}\|^{a,b} \leq \sum_{j=0}^{2^k-1} (1+2^{-k}C_1)^j 2^{-2k}C_2  \leq 
  \dfrac{C_2}{C_1}(e^{C_1}-1)2^{-k}.
 \end{equation*}
From \eqref{both_cases} it holds
 \begin{equation*}
  \|\mu_t^k- \mu_t^{k+1}\| \leq \|m_{2^k}^k-m_{2^{k+1}}^{k+1}\|^{a,b} ,
 \end{equation*}
 and then we conclude
  \begin{equation*}
  \|\mu_t^k- \mu_t^{k+1}\| \leq \dfrac{C_2}{C_1}(e^{C_1}-1)2^{-k}.
 \end{equation*}
 }
Since the right hand side is the term of a convergent series, then $(\mu^k_t)_k$ is a Cauchy sequence.
\end{proof}

\subsection{Proof of Theorem \ref{t-exun}}

In this section, we prove Theorem \ref{t-exun}, stating existence and uniqueness of the solution to the Cauchy problem associated to \eqref{transport_equation}. The proof is based on the proof of the same result for positive measures written in \cite{PR16}. 
We first focus on existence. 

{\bf Step 1. Existence.}
Observe that the sequence given by the scheme $(\mu_t^k)_k$ is a Cauchy sequence (Proposition \ref{prop:Cauchy_sequence}) in the space
$\left(\C^0[0,1], \;\M^s(\R^d)\right)$  is uniformly bounded in mass \rev{and tight (see Proposition \ref{prop:Cauchy_sequence})} . Then, by using Theorem \ref{thm:Banach}, we define
\begin{equation*}
 \mu_t:= \lim\limits_{k\to \infty} \mu_t^k, 
\end{equation*}
\rev{where the convergence holds in the space $\C^0\left([0,1], \M^s(\R^d)\right).$}
Denote the following \rev{for $\varphi \in \D((0,1)\times \R^d)$}: $$\langle \mu,\varphi\rangle:= \dst\int_{\R^d} \varphi(t,x) d\mu_t(x).$$
The goal is to prove that for all $\varphi \in \D((0,1) \times \R^d)$, we have
\begin{equation*}
  \dst\int_0^1 dt\left( \langle  \mu_t , \p_t \varphi (t,x) + v[\mu_t]. \nabla \varphi(t,x)\rangle + \langle h[\mu_t], \varphi(t,x) \rangle \right) =0.
\end{equation*}
\rev{This implies (it is equivalent) that for all $\phi \in \D((0,1) \times \R^d)$, \eqref{eq:goal} holds (see \cite[chapter 8]{AGS08}.}
We first notice that
\begin{equation*}
 \begin{aligned}
  \sum_{j=0}^{2^k-1}\dst\int_{j \Delta t}^{(j+1)\Delta t}
  dt \left( \langle  \mu_t^k , \p_t \varphi (t,x) + v[\mu_{j\Delta t}^k]. \nabla \varphi(t,x)\rangle + \langle h[\mu^k_{j\Delta t}], \varphi(t,x) \rangle \right)  \underset{k\to \infty}{\longrightarrow }0
 \end{aligned}
\end{equation*}
Indeed, $\nu_t:= \phi_t^v \# \nu_0$ is a weak solution of
$\frac{\p}{\p t} \nu_t + \nabla.\left(v(x) \nu_t\right)=0$ with $v$ a fixed vector field,
and $\eta_t= \eta_0 +t h$ is a weak solution of 
$\frac{\p}{\p t} \eta_t = h$, with $h$ a fixed measure.
\rev{We apply this to $\mu^k$ piecewise on each time interval.} 
It then holds, \rev{using that $\mu_t^k$ satisfies \eqref{eqt:scheme}}
\begin{equation*}
 \begin{aligned}
 \Big| \sum_{j=0}^{2^k-1}\dst\int_{j \Delta t}^{(j+1)\Delta t}&
  dt \left( \langle  \mu_t^k , \p_t \varphi (t,x) + v[\mu_{j\Delta t}^k]. \nabla \varphi(t,x)\rangle + \langle h[\mu^k_{j\Delta t}], \varphi(t,x) \rangle \right) \Big| \\
  &=\left| \sum_{j=0}^{2^k-1}\dst\int_{j \Delta t}^{(j+1)\Delta t}
  dt \; \langle  (t-j\Delta t) h[\mu_{j \Delta t}^k] ,  v[\mu_{j\Delta t}^k]. \nabla \varphi(t,x)\rangle  \right| 
 \\ &
 \leq M P \|\nabla \varphi\|_{\infty} 2^{-(k+1)}
  \underset{k\to \infty}{\longrightarrow }0.
 \end{aligned}
\end{equation*}

Now, to guarantee \eqref{eq:goal}, it is enough to prove that 
\begin{equation*}
\begin{aligned}
 \lim\limits_{k\to \infty} \Big|
 &\dst\int_0^1 dt\left( \langle  \mu_t , \p_t \varphi (t,x) + v[\mu_t]. \nabla \varphi(t,x)\rangle + \langle h[\mu_t], \varphi(t,x) \rangle \right) 
 \\&-\sum_{j=0}^{2^k-1}\dst\int_{j \Delta t}^{(j+1)\Delta t}
  dt \left( \langle  \mu_t^k , \p_t \varphi (t,x) + v[\mu_{j\Delta t}^k]. \nabla \varphi(t,x)\rangle + \langle h[\mu_t^k], \varphi(t,x) \rangle \right) 
 \Big|=0
 \end{aligned}
\end{equation*}
We have
\begin{equation*}
 \begin{aligned}
 \Big|
 \dst\int_0^1 dt\left( \langle  \mu_t , \p_t \varphi (t,x) \rangle\right) 
 -\sum_{j=0}^{2^k-1}\dst\int_{j \Delta t}^{(j+1)\Delta t}
  dt \left( \langle  \mu_t^k , \p_t \varphi (t,x)  \rangle \right) 
 \Big|
 \leq \|\p_t\varphi\|_{\infty} \|\mu_t- \mu_t^k\|  \underset{k\to \infty}{\longrightarrow }0,
 \end{aligned}
\end{equation*}

\begin{equation*}
\begin{aligned}
 \Big|
 &\dst\int_0^1 dt  \langle h[\mu_t], \varphi(t,x) \rangle  
-\sum_{j=0}^{2^k-1}\dst\int_{j \Delta t}^{(j+1)\Delta t}
  dt  \langle h[\mu_t^k], \varphi(t,x) \rangle  
 \Big| \leq Q \|\varphi\|_{\infty} \|\mu_t- \mu_t^k\|  \underset{k\to \infty}{\longrightarrow }0,
 \end{aligned}
\end{equation*}

and 
\begin{equation*}
\begin{aligned}
 \Big|
 &\dst\int_0^1 dt \langle  \mu_t ,  v[\mu_t]. \nabla \varphi(t,x)\rangle 
-\sum_{j=0}^{2^k-1}\dst\int_{j \Delta t}^{(j+1)\Delta t}
  dt  \langle  \mu_t^k ,  v[\mu_{j\Delta t}^k]. \nabla \varphi(t,x)\rangle 
 \Big|
\\
&\leq 
 \Big| \sum_{j=0}^{2^k-1}\dst\int_{j \Delta t}^{(j+1)\Delta t}
  dt  \langle  \mu_t^k- \mu_t ,  v[\mu_{j\Delta t}^k]. \nabla \varphi(t,x)\rangle 
 \Big|
 +  \Big| \sum_{j=0}^{2^k-1}\dst\int_{j \Delta t}^{(j+1)\Delta t}
  dt  \langle  \mu_t^k , ( v[\mu_{j\Delta t}^k] - v[\mu_t^k]). \nabla \varphi(t,x)\rangle 
 \Big|
 \\
 &+  \Big| \sum_{j=0}^{2^k-1}\dst\int_{j \Delta t}^{(j+1)\Delta t}
  dt  \langle  \mu_t^k , ( v[\mu_t] - v[\mu_t^k]). \nabla \varphi(t,x)\rangle 
 \Big| \\
& \leq 
\|\nabla\varphi\|_{\infty} \left( M \|\mu_t- \mu_t^k\|  
+  LM(P + |\mu_0|)  2^{-(k+1)}
+  (P + |\mu_0|)L \|\mu_t- \mu_t^k\|\right)
  \underset{k\to \infty}{\longrightarrow }0.
 \end{aligned}
\end{equation*}
This proves \eqref{eq:goal}.

{\bf Step 2. Any weak solution to \eqref{transport_equation} is Lipschitz in time.}
In this step, we prove that any weak solution \rev{in the sense of Definition \ref{def:mv_solution}} to the transport equation \eqref{transport_equation}
is Lipschitz with respect to time, since it satisfies 
\begin{equation}
\label{Lipschitz_estimate}
 \|\mu_{t+\tau} - \mu_t\|^{a,b} \leq L_1 \tau, \qquad t\geq 0, \; \tau \geq 0,
\end{equation}
with $L_1= P+bM(P(t+\tau)+|\mu_0|)$. To do so, we consider a solution $\mu_t$ to \eqref{transport_equation}. We define 
the vector field $w(t,x):=v[\mu_t](x)$ and the signed measure $b_t=h[\mu_t]$.
The vector field $w$ is uniformly Lipschitz and uniformly bounded with respect to $x$, since $v$ is so. The field $w$ is also measurable in time, since by definition, $\mu_t$ is continuous in time.
Then, $\mu_t$ is the unique solution of 
 \begin{equation}
 \label{transport_equation2}
  \p_t \mu_t(x) + \div.(w(t,x) \mu_t(x)) = b_t(x), \qquad   \mu_{|t=0}(x)=\mu_0(x).
 \end{equation}
Uniqueness of the solution of the linear equation \eqref{transport_equation2} is a direct consequence of 
Lemma \ref{lemma:characteristics}. 
Moreover, the scheme presented in Section \ref{s-scheme} can be rewritten for the vector field $w$ in which dependence with respect to time is added 
and dependence with respect to the measure is dropped. Thus, the unique solution $\mu_t$ to \eqref{transport_equation2} can be obtained as the limit 
of this scheme.
We have for $k\geq 0$ the following estimate
\begin{equation*}
 \| \mu_{t+\tau} -\mu_t\|^{a,b} \leq \|\mu_t - \mu_t^k\|^{a,b}+  \|\mu_t^k - \mu_{t+\tau}^k\|^{a,b} + \|\mu_{t+\tau}^k - \mu_{t+\tau}\|^{a,b},
\end{equation*}
where $\mu_t^k$ is given by the scheme.
The first and third terms can be rendered as small as desired for $k\geq k_0$ large enough, independent on $t,\tau$. For $\ell:= \min\{i \in \{1,\dots,2^k\}, \;  t \leq \frac{i}{2^k}\}$, $j:= \min\{i \in \{1,\dots,2^k\}, \; ,t+\tau \leq \frac{i}{2^k}\}$
 with the notations of the scheme, it holds
\begin{equation*}
\begin{aligned}
   \|\mu_{t+\tau}^k - \mu_t^k\|^{a,b}= \|m_j^k-m_{\ell}^k\|^{a,b}
   &=  \|\sum_{i=\ell}^{j-1} (m_{i+1}^k -m_i^k) \|^{a,b}
 = \|\sum_{i=\ell}^{j-1} (\phi_{\Delta t}^{v[m_i^k]}   \# m_i^k + \Delta t h[m_i^k] -m_i^k) \|^{a,b}
 \\
 &\leq \sum_{i=\ell}^{j-1} \|\phi_{\Delta t}^{v[m_i^k]}   \# m_i^k -m_i^k \|^{a,b} + \Delta t \|\sum_{i=\ell}^{j-1} h[m_i^k] \|^{a,b}.
 \end{aligned}
   \end{equation*}
   
Using Lemma \ref{lemma:flow} and \eqref{bounded_mass}, it holds
\begin{equation}
  \sum_{i=\ell}^{j-1} \| \phi_{\Delta t}^{v[m_i^k]}   \# m_i^k -m_i^k \|^{a,b}\leq \dfrac{j-\ell}{2^k} b M  (P(t+\tau) + |\mu_0|)
  \leq bM(P(t+\tau)+|\mu_0|)\tau + \dfrac{bM(P(t+\tau)+|\mu_0|)}{2^k}.
  \label{e-1}
\end{equation}
Using  \ref{H3}, we have
\begin{equation}
 \Delta t \|\sum_{i=\ell}^{j-1} h[m_i^k] \|^{a,b} \leq \dfrac{j-\ell}{2^k}P \leq P(t+\tau) \left(\tau +\dfrac{1}{2^k}\right),\label{e-2}
\end{equation}
Merging \eqref{e-1}-\eqref{e-2} and letting $k\to\infty$, we recover \eqref{Lipschitz_estimate}.

   {\bf Step 3. Any weak solution to \eqref{transport_equation} satisfies} the operator splitting estimate:
   \begin{equation}
   \label{stab_estimate}
    \|\mu_{t+ \tau} - (\phi_{\tau}^{v[\mu_t]} \# \mu_t + \tau h[\mu_t])\|^{a,b} \leq K_1 \tau^2,
   \end{equation}
 for some $K_1>0$. Indeed, let us consider a solution $\mu_t$ to \eqref{transport_equation}.
As in the previous step, $\mu_t$ is the unique solution to \eqref{transport_equation2}, and thus 
it can be obtained as the limit of the sequence provided by the scheme.
With the notations used in Step 2 and using Lemma \ref{lemma:flow}
\begin{equation*}
\begin{aligned}
\|\mu_{t+\tau}- (\phi_{\tau}^{v[\mu_t]} \# \mu_t + \tau h[\mu_t])\|^{a,b} 
&\leq 
\|\mu_{t+\tau}- \mu_{t+\tau}^k\|^{a,b} 
+  \|\mu_{t+\tau}^k- (\phi_{\tau}^{v[\mu_t^k]} \# \mu_t^k + \tau h[\mu_t^k])\|^{a,b} \\
&+ \tau \|h[\mu_t^k]-h[\mu_t]\|^{a,b}+ \| \phi_{\tau}^{v[\mu_t]} \# \mu_t- \phi_{\tau}^{v[\mu_t^k]}  \#\mu_t^k \|^{a,b}. \\
\end{aligned}
\end{equation*}

The first, third and fourth terms can be rendered as small as needed for $k$ sufficiently large, independently on $\tau$. We focus then on the second term. 
Assume for simplicity that $t=\ell\Delta t$ and $t+\tau = (\ell + n)\Delta t$, we have
\begin{equation*}
\begin{aligned}
 \|\mu_{t+\tau}^k- (\phi_{\tau}^{v[\mu_t]} \# \mu_t^k + \tau h[\mu_t])\|^{a,b}
=\|m_{\ell+n}^k- (\phi_{n\Delta t}^{v[m_{\ell}^k]} \# m_{\ell}^k + n\Delta t \;h[m_{\ell}^k])\|^{a,b}.
\end{aligned}
\end{equation*}
For $n=2$, we have
\begin{equation*}
 \begin{aligned}
 & \|m_{\ell+2}^k- (\phi_{2\Delta t}^{v[m_{\ell}^k]} \# m_{\ell}^k + 2\Delta t \;h[m_{\ell}^k])\|^{a,b}
  =\|\phi_{\Delta t}^{v[m_{\ell+1}^k]} \# m_{\ell+1}^k +\Delta t \;h[m_{\ell+1}^k]-\phi_{\Delta t}^{v[m_{\ell}^k]}\#\phi_{\Delta t}^{v[m_{\ell}^k]} \# m_{\ell}^k - 2\Delta t \;h[m_{\ell}^k]\|^{a,b}
 \\&=\|\phi_{\Delta t}^{v[m_{\ell+1}^k]} \#\left(\phi_{\Delta t}^{v[m_{\ell}^k]}\# m_{\ell}^k + \Delta t \; h[m_{\ell}^k]\right)  +\Delta t \;h[m_{\ell+1}^k]-\phi_{\Delta t}^{v[m_{\ell}^k]}\#\phi_{\Delta t}^{v[m_{\ell}^k]} \# m_{\ell}^k - 2\Delta t \;h[m_{\ell}^k]\|^{a,b}
  \\&=\|\phi_{\Delta t}^{v[m_{\ell+1}^k]} \#\phi_{\Delta t}^{v[m_{\ell}^k]}\# m_{\ell}^k + \Delta t \;\phi_{\Delta t}^{v[m_{\ell+1}^k]} \# h[m_{\ell}^k]+\Delta t \;h[m_{\ell+1}^k]-\phi_{\Delta t}^{v[m_{\ell}^k]}\#\phi_{\Delta t}^{v[m_{\ell}^k]} \# m_{\ell}^k - 2\Delta t \;h[m_{\ell}^k]\|^{a,b}
 \\&\leq \|\phi_{\Delta t}^{v[m_{\ell+1}^k]} \#\phi_{\Delta t}^{v[m_{\ell}^k]}\# m_{\ell}^k -\phi_{\Delta t}^{v[m_{\ell}^k]}\#\phi_{\Delta t}^{v[m_{\ell}^k]} \# m_{\ell}^k \|^{a,b}
 +\Delta t \|  \phi_{\Delta t}^{v[m_{\ell+1}^k]} \# h[m_{\ell}^k]+h[m_{\ell+1}^k]- 2h[m_{\ell}^k]\|^{a,b}
 \end{aligned}
\end{equation*}
Using Step 2, we have $\|m_{\ell+n}^k-m_{\ell}^k\| \leq L_1 n\Delta t$. Then, using Lemma \ref{lemma:flow}
\begin{equation*}
 \begin{aligned}
 & \|m_{\ell+2}^k- (\phi_{2\Delta t}^{v[m_{\ell}^k]} \# m_{\ell}^k + 2\Delta t \;h[m_{\ell}^k])\|^{a,b}\leq
 C\Delta t^2
 \end{aligned}
\end{equation*}

By induction on $i= 1\dots n$, it then holds
\begin{equation*}
 \begin{aligned}
 & \|m_{\ell+n}^k- (\phi_{n\Delta t}^{v[m_{\ell}^k]} \# m_{\ell}^k + n\Delta t \;h[m_{\ell}^k])\|^{a,b}\leq
C (n\Delta t)^2, \end{aligned}
\end{equation*}
and \eqref{stab_estimate} follows.

   {\bf Step 4. Uniqueness of the solution to  \eqref{transport_equation} and continuous dependence.}
  Assume that $\mu_t$ and $\nu_t$ are two solutions to \eqref{transport_equation} with initial condition $\mu_0,\nu_0$, respectively. Define $\eps(t) := \|\mu_t- \nu_t\|^{a,b}$.
  We denote 
  
\begin{equation*}
  R_{\mu}(t,\tau)=\mu_{t+ \tau} - (\phi_{\tau}^{v[\mu_t]} \# \mu_t + \tau h[\mu_t]), \quad 
    R_{\nu}(t,\tau)=\nu_{t+ \tau} - (\phi_{\tau}^{v[\nu_t]} \# \nu_t + \tau h[\nu_t]).
 \end{equation*}
 
   Using Lemma \ref{lemma:flow} and Step 3, and $e^{L\tau}\leq 1+2L\tau$ for $0\leq L\tau\leq \ln(2),$ we have that
   $\eps(t)$ is Lipschitz and it satisfies
\begin{eqnarray*}
    \eps(t+\tau)&=& \|\mu_{t+\tau} - \nu_{t+\tau}\|^{a,b}=\|\phi_{\tau}^{v[\mu_t]}\#\mu_t +\tau h[\mu_t] +R_{\mu}(t,\tau)-\phi_{\tau}^{v[\nu_t]}\#\nu_t -\tau h[\nu_t] -R_{\nu}(t,\tau) \|^{a,b}\\
    &\leq& \|\phi_{\tau}^{v[\mu_t]}\#\mu_t -\phi_{\tau}^{v[\nu_t]}\#\nu_t \|^{a,b}  +\tau\| h[\mu_t]-h[\nu_t] \|^{a,b} + \| R_{\mu}(t,\tau)\|^{a,b} + \|R_{\nu}(t,\tau) \|^{a,b}\\
    &\leq& e^{L\tau} \|\mu_t -\nu_t \|^{a,b}+ b(P+|\mu_0|)\frac{e^{L\tau}-1}{L}\|v[\mu_t] -v[\nu_t] \|_{\C^0}
    +\tau Q\|\mu_t -\nu_t \|^{a,b}+2K_1\tau^2    \\
   &\leq &\left(e^{L\tau} + b(P+\MT{\min\{|\mu_0|,|\nu_0|\}})2\tau K+\tau Q\right) \|\mu_t-\nu_t\|^{a,b} +2 K_1 \tau^2 \\
   &\leq& \left( 1+ \tau(2L+2 b K(P+\MT{\min\{|\mu_0|,|\nu_0|\}}) + Q)\right) \|\mu_t-\nu_t\|^{a,b} +2 K_1 \tau^2,
   \end{eqnarray*}

which is
\begin{equation}
 \dfrac{\eps(t+\tau)-\eps(t)}{\tau} \leq M \eps(t) +2K_1\tau , \qquad t>0,\; \tau \leq \dfrac{\ln(2)}{L},\quad M= 2L+2K(P+\MT{\min\{|\mu_0|,|\nu_0|\}}) + Q.
\end{equation}
Letting $\tau$ go to zero, we deduce $ \eps'(t) \leq M \eps(t)$ almost everywhere. Then, $\eps(t)\leq \eps(0) \exp(Mt)$, that is continuous dependence with respect to the initial data.

Moreover, if $\mu_0=\nu_0$, then $\eps(0)=0$, thus $\eps(t)=0$ for all $t$. Since $\|.\|^{a,b}$ is a norm, this implies $\mu_t=\nu_t$ for all $t$, that is uniqueness of the solution.

\bibliographystyle{plain}
 \bibliography{bibli19}


\end{document}